\newtheorem{thm}{Theorem}[section]
\newtheorem*{thm*}{Theorem}
\newtheorem{cor}[thm]{Corollary}
\newtheorem{prop}[thm]{Proposition}
\newtheorem{lem}[thm]{Lemma}
\theoremstyle{definition}
\newtheorem{defn}[thm]{Definition}
\newtheorem{exmp}[thm]{Example}
\newtheorem{fact}[thm]{Fact}
\theoremstyle{remark}
\newtheorem{rem}[thm]{Remark}
\let\c@equation\c@thm
\numberwithin{equation}{section}
\def\Ind#1#2{#1\setbox0=\hbox{$#1x$}\kern\wd0\hbox to 0pt{\hss$#1\mid$\hss}\lower.9\ht0\hbox to 0pt{\hss$#1\smile$\hss}\kern\wd0}
\def\Notind#1#2{#1\setbox0=\hbox{$#1x$}\kern\wd0\hbox to 0pt{\mathchardef
	\nn=12854\hss$#1\nn$\kern1.4\wd0\hss}\hbox to
	0pt{\hss$#1\mid$\hss}\lower.9\ht0 \hbox to
	0pt{\hss$#1\smile$\hss}\kern\wd0}
\newcommand{\ind}[1][]{\mathop{\mathpalette\Ind{}^{\!\!\!\!\rlap{$\scriptscriptstyle\textnormal{#1}$}\,\,\,\,}}}
\newcommand{\nind}[1][]{\mathop{\mathpalette\Notind{}^{\!\!\!\rlap{$\scriptscriptstyle\textnormal{#1}$}\,\,\,}}}
\providecommand{\claimname}{Claim}
\title{Transitivity of Kim-independence}
\date{\today}
\author{Itay Kaplan and Nicholas Ramsey}
\thanks{The first author would like to thank the Israel Science Foundation
for partial support of this research (grants No. 1533/14 and 1254/18).}
\begin{document}

\begin{abstract}
We prove several results on the behavior of Kim-independence upon changing the base in NSOP$_{1}$ theories.  As a consequence, we prove that Kim-independence satisfies transitivity and that this characterizes NSOP$_{1}$.  Moreover, we characterize witnesses to Kim-dividing as exactly the $\ind^{K}$-Morley sequences.  We give several applications, answering a number of open questions concerning transitivity, Morley sequences, and local character in NSOP$_{1}$ theories.  
\end{abstract}

\maketitle

\setcounter{tocdepth}{1}
\tableofcontents

\section{Introduction}

The class of NSOP$_{1}$ theories may be viewed as the class of theories that are simple at a generic scale.  This picture emerged piecemeal, starting with the results of Chernikov and the second-named author \cite{ArtemNick}, which established a Kim-Pillay-style criterion for NSOP$_{1}$ and characterized the NSOP$_{1}$ theories in terms of a weak variant of the independence theorem.  Simplicity-like behavior had been observed in certain algebraic structures\textemdash for example, the generic vector space with a bilinear form studied by Granger, and $\omega$-free PAC fields investigated by Chatzidakis\textemdash and these new results established these structures are NSOP$_{1}$ and suggested that this simplicity-like behavior might be characteristic of the class.  The analogy with simplicity theory was deepened in \cite{kaplan2017kim} and \cite{24-Kaplan2017} with the introduction of Kim-independence.  There it was shown that, in an NSOP$_{1}$ theory, Kim-independence satisfies appropriate versions of Kim's lemma, symmetry, the independence theorem, and local character and that, moreover, these properties individually characterize NSOP$_{1}$ theories.  This notion of independence has proved useful in proving preservation of NSOP$_{1}$ under various model-theoretic constructions and has been shown to coincide with natural algebraic notions of independence in new concrete examples.  In this way, the structure theory for NSOP$_{1}$ theories has developed along parallel lines to simplicity theory, with Kim-independence replacing the core notion of non-forking independence.  

The key difference between these settings stems from the fact that the notion of Kim-independence only speaks about the behavior of dividing at the \emph{generic} scale.  To say that $a$ is Kim-independent over $M$ with $b$ is to say that any $M$-indiscernible sequence $I$ beginning with $b$, if sufficiently generic over $M$, is conjugate over $Mb$ to one that is indiscernible over $Ma$.  In the initial definition of Kim-independence, genericity is understood to mean that the sequence is a Morley sequence in a global $M$-invariant type, but, after the fact, it turns out that broader notions of generic sequence give rise to equivalent definitions in the context of NSOP$_{1}$ theories \cite[Theorem 7.7]{kaplan2017kim}.  In any case, this additional genericity requirement in the definition of independence produces a curious phenomenon:  roughly speaking, asserting indiscernibility over a larger base is making a stronger statement, asserting genericity over a bigger base is making a weaker one.  This tension is what introduces subtleties in the generalization of facts from non-forking independence in simple theories to the broader setting of Kim-independence in NSOP$_{1}$ theories, as base monotonicity no longer holds.  In fact, an NSOP$_{1}$ theory in which Kim-independence satisfies base monotonicity is necessarily simple \cite[Proposition 8.8]{kaplan2017kim}.  

This paper is devoted to studying the ways that genericity over one base may be transfered to genericity over another base.  Base monotonicity trivializes all such questions in the context of non-forking independence in simple theories, so the issues we deal with here are new and unique to the NSOP$_{1}$ world.  The first work along these lines was in \cite{kruckman2018generic}, where Kruckman and the second-named author proved ``algebraically reasonable" versions of extension, the independence theorem, and the chain condition, which allow one to arrange for tuples to be Kim-independent over a given base and \emph{algebraically} independent over a larger one.  We build on this work, showing that in many cases one can arrange for Kim-independence over both bases and extend this to the construction of Morley sequences.  This leads to our main theorem:

\begin{thm*}
Suppose $T$ is a complete theory.  The following are equivalent:
\begin{enumerate}
\item $T$ is NSOP$_{1}$
\item Transitivity of Kim-independence over models:  if $M \prec N \models T$, $a \ind^{K}_{M} N$ and $a \ind^{K}_{N} b$, then $a \ind^{K}_{M} Nb$.\footnote{In the literature, \emph{transitivity} for a relation $\ind$ is sometimes taken to mean $a \ind_{A} b + a \ind_{Ab} c \iff a \ind_{A} bc$, which implies base monotonicity.  Since, in general, $\ind^{K}$ does not satisfy base monotonicity in an NSOP$_{1}$ theory, we use transitivity to denote only the $\implies$ direction.  This is reasonable since this may be paraphrased by saying that a non-Kim-forking extension of a non-Kim-forking extension is a non-Kim-forking extension (all extensions over models).  Kim has suggested using the term ``transitivity lifting" for this notion, but we opt for the simpler ``transitivity."}
\item $\ind^{K}$-Morley sequences over models are witnesses:  if $M \models T$ and $\varphi(x;b_{0})$ Kim-divides over $M$ and $\langle b_{i} : i < \omega \rangle$ is an $\ind^{K}$-Morley sequence, then $\{\varphi(x;b_{i}) : i < \omega\}$ is inconsistent.  
\end{enumerate}
\end{thm*}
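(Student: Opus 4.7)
The plan is to establish the cycle $(1) \Rightarrow (3) \Rightarrow (2) \Rightarrow (1)$. The two implications returning to NSOP$_{1}$ are the easier ones: every Morley sequence in a global $M$-invariant type is in particular an $\ind^{K}$-Morley sequence, so $(3)$ immediately recovers the classical Kim's lemma for invariant Morley sequences, which is already known to characterize NSOP$_{1}$ (e.g.\ via \cite{kaplan2017kim}); for $(2) \Rightarrow (1)$ I would argue contrapositively from an SOP$_{1}$-pattern, extracting an explicit configuration $M \prec N$, $a$, $b$ with $a \ind^{K}_{M} N$ and $a \ind^{K}_{N} b$ but $a \nind^{K}_{M} Nb$, in the spirit of the counterexample used for \cite[Proposition 8.8]{kaplan2017kim}. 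The real work lies in the two forward implications.

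For $(1) \Rightarrow (3)$, the task is to upgrade Kim's lemma from $M$-invariant Morley sequences to arbitrary $\ind^{K}$-Morley sequences. I would argue by contradiction: suppose $\varphi(x; b_{0})$ Kim-divides over $M$, let $\langle b_{i} : i < \omega \rangle$ be an $\ind^{K}$-Morley sequence over $M$, and assume $\{\varphi(x; b_{i}) : i < \omega\}$ is consistent, realized by some $a$. The goal is to manufacture, from $\langle b_{i} \rangle$, a genuinely $M$-invariant Morley sequence $\langle b_{i}' \rangle$ with $b_{0}' = b_{0}$ along which $\{\varphi(x; b_{i}')\}$ is also consistent, contradicting the classical Kim's lemma in NSOP$_{1}$. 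This transfer would be built inductively, pushing $a$ and the tail of $\langle b_{i} \rangle$ up to an ever-larger invariant base while preserving consistency of the $\varphi$-instances; each step would use the independence theorem, symmetry of $\ind^{K}$, and the base-change/extension results proved earlier in the paper (building on the ``algebraically reasonable'' variants from \cite{kruckman2018generic}).

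For $(3) \Rightarrow (2)$, suppose $a \ind^{K}_{M} N$ and $a \ind^{K}_{N} b$ with $M \prec N$, and assume for contradiction $a \nind^{K}_{M} Nb$, witnessed by some $\varphi(x; y)$ with $\models \varphi(a; Nb)$ and $\varphi(x; Nb)$ Kim-dividing over $M$. From $a \ind^{K}_{N} b$, symmetry and the existence of $\ind^{K}$-Morley sequences yield an $N$-Morley sequence $\langle b_{i} \rangle$ starting with $b$ that is $Na$-indiscernible, so $a$ realizes every $\varphi(x; N b_{i})$. From $a \ind^{K}_{M} N$, one similarly obtains an $M$-Morley sequence $\langle N_{i} \rangle$ starting with $N$ that is $Ma$-indiscernible. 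The decisive step is to splice these into a single sequence $\langle N_{i} b_{i} : i < \omega \rangle$ that is an $\ind^{K}$-Morley sequence over $M$, begins with $Nb$, and along which $a$ still realizes every $\varphi(x; N_{i} b_{i})$. Once built, $(3)$ forces $\{\varphi(x; N_{i} b_{i}) : i < \omega\}$ to be inconsistent, contradicting the realization by $a$. The splicing is again an inductive construction controlled by the base-change machinery of the paper.

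The principal obstacle is the transfer construction in $(1) \Rightarrow (3)$, and to a lesser degree the splicing in $(3) \Rightarrow (2)$. Invariant Morley sequences are very rigid: they cohere in a single global type over an ever-growing base, whereas an $\ind^{K}$-Morley sequence only records Kim-independence at each finite stage, with no coherent global type in the background. Bridging that gap while simultaneously preserving Kim-independence, indiscernibility over $a$, and consistency of $\{\varphi(x; b_{i})\}$ is the delicate heart of the argument, and I expect it to consume most of the technical effort.
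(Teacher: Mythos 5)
Your cycle $(1)\Rightarrow(3)\Rightarrow(2)\Rightarrow(1)$ is a legitimate decomposition in principle, and your two return implications are fine in outline: invariant Morley sequences are $\ind^{K}$-Morley, so $(3)$ recovers Kim's lemma and hence NSOP$_{1}$, and $(2)\Rightarrow(1)$ by a Skolemized SOP$_{1}$ configuration is essentially what the paper does (Proposition \ref{transitivity implies nsop1}, via Lemma \ref{goodindiscernible}). The problem is that both forward implications are left as unsubstantiated sketches, and the one you put first, $(1)\Rightarrow(3)$, is attacked by a method that I do not believe can be made to work as described. You propose to convert an $\ind^{K}$-Morley sequence along which $\{\varphi(x;b_{i})\}$ is consistent into an $M$-invariant Morley sequence with the same property, ``pushing $a$ and the tail up to an ever-larger invariant base.'' You correctly identify the obstacle\textemdash an $\ind^{K}$-Morley sequence has no coherent global type behind it\textemdash but you do not resolve it, and the independence theorem plus extension do not manufacture invariance out of Kim-independence. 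The paper's proof of $(1)\Rightarrow(3)$ (Theorem \ref{witnessing theorem}) uses an entirely different mechanism that your proposal never mentions: it first proves transitivity, stretches the sequence to length $\kappa^{+}$, builds a continuous chain of models $N_{i}\supseteq a_{<i}$ with $a_{i}\ind^{K}_{M}N_{i}$, and then invokes \emph{local character on a club} to find some $N_{i}$ with $c\ind^{K}_{N_{i}}a_{i}$ for a putative realization $c$; transitivity then pulls this down to $c\ind^{K}_{M}a_{i}$, a contradiction. So in the paper the logical order is $(1)\Rightarrow(2)\Rightarrow(3)$, the exact reverse of yours, and $(3)$ is genuinely downstream of transitivity plus the club local character of \cite{24-Kaplan2017}.

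Your $(3)\Rightarrow(2)$ has the same character: the entire content is the ``splicing'' of an $N$-generic sequence in $b$ with an $M$-generic sequence in $N$ into a single sequence $\langle N_{i}b_{i}\rangle$ that is $\ind^{K}$-Morley over $M$ and $Ma$-indiscernible, and this is precisely the kind of two-base genericity that the paper needs a full section to produce (Lemma \ref{kms over M tms over N}, Lemma \ref{weirdextension}, and the transfinite tree construction of Proposition \ref{goodseq}, which yields a sequence tree-Morley over $M$ and $\ind^{K}$-Morley over $N$ and then concludes by Kim's lemma for tree Morley sequences). ``An inductive construction controlled by the base-change machinery'' is not a proof of that step; it is the theorem. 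As it stands the proposal correctly isolates where the difficulty lies but supplies neither of the two constructions that constitute the actual mathematical content, and the route chosen for $(1)\Rightarrow(3)$ points away from the argument that works.
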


The direction $(3) \implies (1)$ was known by \cite[Theorem 3.16]{kaplan2017kim}, but all other directions are new.  We prove $(1)\implies (2)$ in Theorem \ref{transitivity theorem}, $(2)\implies(1)$ in Proposition \ref{transitivity implies nsop1}, and finally $(1)\implies (3)$ in Theorem \ref{witnessing theorem} below.  

The theorem clarifies the extent to which concepts from simplicity theory can be carried over to the NSOP$_{1}$ context.  The Kim-Pillay theorem for simple theories catalogues the basic properties of non-forking independence in a simple theory.  We had showed all of these properties for Kim-independence except base monotonicity, transitivity, and local character in \cite{kaplan2017kim}, and observed that base monotonicity had to go for non-simple NSOP$_{1}$ theories.  Local character was later established in joint work with Shelah in \cite{24-Kaplan2017}, which left only transitivity.  An alternative formulation of transitivity, which is a consequence of the standard one and base monotonicity, was considered in \cite[Section 9.2]{kaplan2017kim}, where it was shown to fail in NSOP$_{1}$ theories in general.  The present theorem establishes transitivity in its usual form and, moreover, goes further, showing that transitivity of Kim-independence is characteristic of NSOP$_{1}$ theories.  

This theorem also represents a signficant technical development in the study of Kim-independence, allowing us to answer several questions.  The (1)$\implies$(2) direction and its proof settle two questions from our prior work \cite[Question 3.14, Question 3.16]{24-Kaplan2017}.  The (1)$\implies$(3) direction collapses two kinds of generic sequence studied in \cite{kaplan2017kim}:  it has as a corollary that tree Morley sequences coincide with total $\ind^{K}$-Morley sequences, answering \cite[Question 7.12]{kaplan2017kim} and, additionally, gives a characterization of witnesses for Kim-dividing in NSOP$_{1}$ theories.  

We give three applications in Section \ref{applications}.  First, we prove two `lifting lemmas' that show that, in an NSOP$_{1}$ theory, if $M$ is an elementary substructure of $N$, then whenever $a \ind^{K}_{M} N$, all $\ind^{K}$-Morley sequences and tree Morley sequences over $M$ beginning with $a$ are conjugate over $Ma$ to sequences that are respectively $\ind^{K}$-Morley or tree Morley over $N$.  This gives an analogue to a known result for non-forking Morley sequences in simple theories and clarifies the relationship between witnesses to Kim-dividing between two bases, one contained in another.  Secondly, we prove a local version of preservation of Kim-independence under unions of chains, which was previously only known for complete types.  In an NSOP$_{1}$ theory, a formula $k$-Kim-divides over an increasing union of models if and only if it $k$-Kim-divides over a cofinal collection of models in the chain (for an appropropriate definition of $k$-Kim-dividing), which answers \cite[Question 3.17]{24-Kaplan2017}.  Finally,  we reformulate the Kim-Pillay-style characterization of $\ind^{K}$ from \cite[Theorem 9.1]{24-Kaplan2017}, instead characterizing $\ind^{K}$ intrinsically in terms of properties of an abstract independence relation, without reference to finite satisfiability.  We expect that these results will have further applications in the study of this class of theories.

\section{Preliminaries}

Throughout the paper, $T$ will denote a complete theory in the language $L$ with infinite monster model $\mathbb{M} \models T$.  We will not notationally distinguish between elements and tuples.  We will write $x,y,$ and $z$ to denote tuples of variables, and use the letters $M,N$ to denote models of $T$.  

\subsection{NSOP$_{1}$ theories, invariant types, and Morley sequences}
\begin{defn}
\cite[Definition 2.2]{dvzamonja2004maximality} A formula $\varphi\left(x;y\right)$
has the $1$-\emph{strong order property} \emph{(SOP}$_{1})$ if there
is a tree of tuples $(a_{\eta})_{\eta\in2^{<\omega}}$ so
that 
\begin{itemize}
\item For all $\eta\in2^{\omega}$, the partial type $\{\varphi\left(x;a_{\eta\restriction n}\right): n<\omega\}$
is consistent. 
\item For all $\nu,\eta\in2^{<\omega}$, if $\nu\frown\langle0\rangle\unlhd\eta$
then $\left\{ \varphi\left(x;a_{\eta}\right),\varphi\left(x;a_{\nu\frown\langle1\rangle}\right)\right\} $
is inconsistent. 
\end{itemize}
A theory $T$ is \emph{NSOP}$_{1}$ if no formula has SOP$_{1}$ modulo
$T$. 
\end{defn}

The following equivalent formulation is more useful in practice:  

\begin{fact}
\label{karyversion} \cite[Lemma 5.1]{ArtemNick} \cite[Proposition 2.4]{kaplan2017kim} A theory $T$ has
NSOP$_{1}$ if and only if there is a formula $\varphi\left(x;y\right)$,
$k<\omega$, and an infinite sequence $\langle \overline{c}_{i}: i\in I\rangle$ with
$\overline{c}_{i}=\left(c_{i,0},c_{i,1}\right)$ satisfying: 
\begin{enumerate}
\item For all $i\in I$, $c_{i,0}\equiv_{\overline{c}_{<i}}c_{i,1}$. 
\item $\{\varphi\left(x;c_{i,0}\right) : i\in I\}$ is consistent. 
\item $\{\varphi\left(x;c_{i,1}\right): i\in I\}$ is $k$-inconsistent. 
\end{enumerate}
Moreover, if $T$ has SOP$_{1}$, there is such a $\varphi$ with $k=2$.  
\end{fact}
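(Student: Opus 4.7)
The plan is to prove both directions of the equivalence, reading the statement as ``$T$ has SOP$_1$ iff the described positive $k$-ary witness exists,'' with the ``moreover'' clause strengthening $k$ to $2$ whenever SOP$_1$ holds.

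\textbf{Forward direction.} Start with $\varphi(x;y)$ and a tree $(a_\eta)_{\eta\in 2^{<\omega}}$ witnessing SOP$_1$. Apply a tree-style modelling/Ramsey theorem (in the style of Kim-Kim-Scow) to replace this tree by a strongly indiscernible one still witnessing SOP$_1$. From the upgraded tree extract, along the leftmost branch,
\[
\overline{c}_i := (c_{i,0}, c_{i,1}) := (a_{0^{i+1}},\ a_{0^i \frown 1}),
\]
the two children of the node at $0^i$. Strong indiscernibility forces $c_{i,0} \equiv_{\overline{c}_{<i}} c_{i,1}$, which is (1). The branch $0^\omega$ is consistent, giving (2). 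For $i<j$, set $\nu=0^i$ and $\eta=0^j\frown 1$; then $\nu\frown 0 \unlhd \eta$, and the SOP$_1$ property of the original tree forces $\{\varphi(x;c_{i,1}),\varphi(x;c_{j,1})\}$ inconsistent. This is (3) with $k=2$, subsuming the ``moreover'' clause.

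\textbf{Reverse direction.} Given $\varphi$, $k$, and $(\overline{c}_i)_{i\in I}$, extend by compactness to $I=\omega$ and by Ramsey assume the sequence is indiscernible. Condition (1) provides automorphisms $\sigma_i\in\mathrm{Aut}(\mathbb{M}/\overline{c}_{<i})$ with $\sigma_i(c_{i,0})=c_{i,1}$. Build a tree $(a_\eta)_{\eta\in 2^{<\omega}}$ recursively on $|\eta|$: the subtree below $\nu\frown 1$ is taken to be the $\sigma_{|\nu|}$-image of the subtree below $\nu\frown 0$, which is itself built by ``continuing down the sequence,'' so that along $\nu\frown 0^\omega$ the labels form a conjugate copy of $(c_{i,0})_{i}$. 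Then along any branch eventually equal to $0^\omega$, the set $\{\varphi(x;a_\eta)\}$ is a conjugate of $\{\varphi(x;c_{i,0}):i<\omega\}$ and hence consistent by (2). Whenever $\nu\frown 0\unlhd\eta$, the labels $a_{\nu\frown 1}$ and $a_\eta$ are conjugates of distinct $c_{\cdot,1}$'s, so (3) supplies $k$-inconsistency for any ``crossing'' family. To recover the pairwise inconsistency demanded by the definition of SOP$_1$, pass to $\varphi'(x;y_1,\ldots,y_{k-1}):=\bigwedge_{\ell<k-1}\varphi(x;y_\ell)$ and regroup nodes into blocks of length $k-1$, turning $k$-inconsistency of $\varphi$ into $2$-inconsistency of $\varphi'$.

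\textbf{Main obstacle.} The delicate step is the tree construction in the reverse direction: the composed automorphisms must yield labels with both the correct consistency along branches and the correct inconsistency across the tree, and one must verify that after the ``$\varphi\leadsto\varphi'$'' regrouping the resulting configuration is a genuine SOP$_1$ tree for $\varphi'$. The intermediate step of passing to a Ramsey-upgraded, indiscernible version of the sequence is what makes this bookkeeping tractable and forms the technical heart of the argument.
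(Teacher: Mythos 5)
This statement is quoted in the paper as a \emph{Fact}, with the proof deferred to the cited sources (\cite[Lemma 5.1]{ArtemNick}, \cite[Proposition 2.4]{kaplan2017kim}); the paper itself contains no argument to compare against, so I am judging your proposal on its own terms. (You were right to read ``NSOP$_1$'' in the statement as a typo for ``SOP$_1$''.) Your forward direction is correct and is the standard extraction: pass to an $s$-indiscernible tree by the modeling property, take $c_{i,0}=a_{0^{i+1}}$ and $c_{i,1}=a_{0^{i}\frown 1}$, and check that the quantifier-free tree types of $0^{i+1}$ and $0^{i}\frown 1$ over the nodes of $\overline{c}_{<i}$ agree, which they do.

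The reverse direction, however, has a genuine gap, and it is not merely an unverified detail: the construction you describe provably fails at the very first crossing pair. If the subtree above $\nu\frown\langle 1\rangle$ is the $\sigma_{|\nu|}$-image of the subtree above $\nu\frown\langle 0\rangle$ and the all-zeros continuations are conjugate copies of $(c_{i,0})_{i}$, then taking $\eta=\nu\frown\langle 0\rangle$ in the SOP$_1$ condition requires $\{\varphi(x;a_{\nu\frown\langle 0\rangle}),\varphi(x;a_{\nu\frown\langle 1\rangle})\}$, i.e.\ (a conjugate of) $\{\varphi(x;c_{i,0}),\varphi(x;c_{i,1})\}$, to be inconsistent --- but conditions (1)--(3) say nothing about such mixed pairs, only about $k$-subsets of $\{\varphi(x;c_{i,1}):i\}$. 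More generally, every node $\eta\unrhd\nu\frown\langle 0\rangle$ on an all-zeros continuation is a conjugate of some $c_{j,0}$, so your claim that all crossing labels ``are conjugates of distinct $c_{\cdot,1}$'s'' contradicts your own description of the branches; reconciling the two demands (branches conjugate into the consistent $0$-row, crossings conjugate into the inconsistent $1$-row) is precisely the content of the lemma and requires a different organization of the automorphisms than the one you give. Separately, your reduction from $k$-inconsistency to $2$-inconsistency by regrouping into blocks of length $k-1$ and passing to $\varphi'=\bigwedge\varphi$ does not recover condition (1) for the blocked array: condition (1) lets you replace $c_{j,0}$ by $c_{j,1}$ one coordinate at a time, by an automorphism over $\overline{c}_{<j}$ that is uncontrolled on the other coordinates of the block, so $d_{i,0}\equiv_{\overline{d}_{<i}}d_{i,1}$ for the $(k-1)$-tuples does not follow. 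Both points need a genuinely new idea, which is why the cited sources devote a nontrivial argument to this direction.
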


Given an ultrafilter $\mathcal{D}$ on a set of tuples $A$, we may define a complete type $\text{Av}(\mathcal{D},B)$ over $B$ by 
$$
\text{Av}(\mathcal{D}, B) = \{\varphi(x;b) : \{a \in A : \mathbb{M} \models \varphi(a,b)\} \in \mathcal{D}\}.
$$
We write $a\ind_{M}^{u}B$ to mean $\text{tp}\left(a/MB\right)$ is finitely satisfiable in $M$, in other words $\text{tp}(a/MB)$ is a \emph{coheir} of its restriction to $M$.  This is additionally equivalent to asserting that there is an ultrafilter $\mathcal{D}$ on tuples from $M$ such that $a \models \text{Av}(\mathcal{D},MB)$.  
A global type $q\in S\left(\mathbb{M}\right)$ is called $A$\emph{-invariant} if $b\equiv_{A}b'$ implies that, for all $\varphi(x;y)$, we have $\varphi\left(x;b\right)\in q$ if and
only if $\varphi\left(x;b'\right)\in q$. A global type $q$ is \emph{invariant}
if there is some small set $A$ such that $q$ is $A$-invariant.  
If $M$ is a model, then any type $p \in S(M)$ is finitely satisfiable in $M$ and hence $p =\text{Av}(\mathcal{D},M)$ for some ultrafilter $\mathcal{D}$ on tuples from $M$.  Then $\text{Av}(\mathcal{D},\mathbb{M})$ is a global $M$-finitely satisfiable (and hence $M$-invariant) extension of $p$ (see, e.g., \cite[Lemma VII.4.1]{shelah1990classification}).

\begin{defn}
Suppose $q$ is an $A$-invariant global type and $I$ is a linearly
ordered set. By a \emph{Morley sequence in }$q$ \emph{over} $A$
\emph{of order type} $I$, we mean a sequence $\langle b_{\alpha}: \alpha\in I\rangle$
such that for each $\alpha\in I$, $b_{\alpha}\models q|_{Ab_{<\alpha}}$
where $b_{<\alpha}=\langle b_{\beta} : \beta<\alpha \rangle$. Given a linear
order $I$, we will write $q^{\otimes I}$ for the unique global $A$-invariant
type in variables $\langle x_{\alpha}: \alpha \in I \rangle$ such that for any
$B\supseteq A$, if $\overline{b}\models q^{\otimes I}|_{B}$ then
$b_{\alpha}\models q|_{Bb_{<\alpha}}$ for all $\alpha\in I$. If
$q$ is, moreover, finitely satisfiable in $A$, in which case $b_{\alpha} \ind^{u}_{A} b_{<\alpha}$ for all $\alpha \in I$, then we refer to
a Morley sequence in $q$ over $A$ as a \emph{coheir sequence} over
$A$.
\end{defn}

We will also make use of the dual notions of heir and an heir sequence:

\begin{defn}
If $B \supseteq M$, we say that $p \in S(B)$ is an \emph{heir} of its restriction to $M$ if $B \ind^{u}_{M} a$ for some, equivalently all, $a \models p$ and we write $a \ind^{h}_{M} b$ if and only if $\text{tp}(a/Mb)$ is an heir of $\text{tp}(a/M)$ if and only if $b \ind^{u}_{M} a$.  We say that $\langle b_{i} : i \in I \rangle$ is an \emph{indiscernible heir sequence} over $M$ if $\langle b_{i} : i \in I \rangle$ is $M$-indiscernible and $b_{i} \ind^{h}_{M} b_{<i}$ for all $i \in I$.  
\end{defn}

\begin{defn}
Suppose $M$ is a model.  
\begin{enumerate}
\item We say that $\varphi\left(x;b\right)$ \emph{Kim-divides over }$M$ if there is a global $M$-invariant $q\supseteq\text{tp}\left(b/M\right)$ and Morley sequence $\langle b_{i} : i < \omega \rangle$ over $M$ in $q$ with $\{\varphi(x;b_{i}): i < \omega\}$ inconsistent.  
\item We say that $\varphi\left(x;b\right)$ Kim-forks over $M$ if it implies
a finite disjunction of formulas, each Kim-dividing over $M$. 
\item A type $p$ Kim-forks over $M$ if there is $\varphi(x;b)$ such that $p \vdash \varphi(x;b)$ and $\varphi(x;b)$ Kim-forks over $M$.  
\item We write $a\ind_{M}^{K}B$ for $\text{tp}\left(a/MB\right)$ does not Kim-fork over $M$.  We may also paraphrase $a \ind^{K}_{M} B$ as $a$ and $B$ are \emph{Kim-independent} over $M$.  
\item We say that an infinite sequence $\langle a_{i} : i \in I \rangle$ is an $\ind^{K}$\emph{-Morley sequence} over $M$ if $\langle a_i : i \in I \rangle$ is $M$-indiscernible and $a_{i} \ind^{K}_{M} a_{<i}$ for all $i \in I$.  
\end{enumerate}
\end{defn}

Note that if $a\ind_{M}^{u}B$ then $a\ind_{M}^{f}B$ (i.e. $\text{tp}\left(a/BM\right)$
does not fork over $M$) which implies $a\ind_{M}^{K}B$. 

Kim-independence may be used to give several equivalents of NSOP$_{1}$.  In order to state the appropriate form of local character for this notion, we will need to introduce the generalized club filter.  

\begin{defn}
\label{clubdef} Let $\kappa$ be a cardinal and $X$ a set with $\left|X\right|\geq\kappa$.
We write $\left[X\right]^{\kappa}$ to denote $\{Y\subseteq X : \left|Y\right|=\kappa\}$ and likewise $[X]^{<\kappa}$ for $\bigcup_{\lambda < \kappa} [X]^{\lambda}$.  A set $C\subseteq\left[X\right]^{\kappa}$ is \emph{club} if, for every $Y\in\left[X\right]^{\kappa}$, there is some $Z\in C$
with $Y\subseteq Z$ and if, whenever $\langle Y_{i} : i<\alpha\leq\kappa \rangle$ is an increasing chain in $C$, i.e. each
$Y_{i}\in C$ and $i<j<\alpha$ implies $Y_{i}\subseteq Y_{j}$, then
$\bigcup_{i<\alpha}Y_{i}\in C$. 
\end{defn}

\begin{fact} \label{basic kimindep facts}
\cite[Theorem 8.1]{kaplan2017kim} \cite[Theorem 1.1]{24-Kaplan2017} \label{kimslemma} The following
are equivalent for the complete theory $T$: 
\begin{enumerate}
\item $T$ is NSOP$_{1}$. 
\item Kim's lemma for Kim-dividing: Given any model $M\models T$ and formula
$\varphi\left(x;b\right)$, $\varphi\left(x;b\right)$ Kim-divides if and only if for any $\langle b_{i} : i < \omega \rangle$ Morley over $M$ in some global $M$-invariant type, $\{\varphi\left(x;b_{i}\right) : i < \omega\}$ is inconsistent.
\item Symmetry of Kim independence over models: $a\ind_{M}^{K}b$ iff $b\ind_{M}^{K}a$
for any $M\models T$.
\item Local character on a club:  given any model $M \models T$ and type $p \in S(M)$, the set $\{N \prec M : |N| = |T| \text{ and } p \text{ does not Kim-divide over }N\}$ is a club subset of $[M]^{|T|}$.  
\item Independence theorem over models: if $A\ind^{K}_{M}B$, $c\ind^{K}_{M}A$,
$c'\ind^{K}_{M}B$ and $c\equiv_{M}c'$ then there is some $c''\ind^{K}_{M}AB$
such that $c''\equiv_{MA}c$ and $c''\equiv_{MB} c'$.
\end{enumerate}
\end{fact}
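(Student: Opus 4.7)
The plan is to establish the chain $(1) \Rightarrow (2) \Rightarrow (3) \Rightarrow (5) \Rightarrow (1)$, with the equivalence of (4) with (1) handled by a separate pair of implications. The central tool throughout is the characterization of SOP$_1$ via interleaved pairs from Fact \ref{karyversion}.

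For $(1) \Rightarrow (2)$ (Kim's lemma), I would argue by contrapositive: if Kim-dividing is not uniformly detected along all global $M$-invariant types, then there are two such types $q_0, q_1 \supseteq \mathrm{tp}(b/M)$ whose Morley sequences respectively yield a $k$-inconsistent set and a consistent set of instances of $\varphi(x;-)$. The key construction interleaves Morley sequences extracted from $q_0$ and $q_1$ into an array $\langle (c_{i,0}, c_{i,1}) : i < \omega \rangle$; the $M$-invariance of both types forces $c_{i,0} \equiv_{\overline{c}_{<i}} c_{i,1}$, and the consistent/inconsistent dichotomy verifies the three bullets of Fact \ref{karyversion}, producing SOP$_1$.

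For $(2) \Rightarrow (3)$, assume $a \ind^K_M b$ and suppose, for contradiction, some $\varphi(x;a) \in \mathrm{tp}(b/Ma)$ Kim-divides over $M$. By Kim's lemma, \emph{every} Morley sequence in every $M$-invariant extension of $\mathrm{tp}(a/M)$ witnesses the inconsistency. Taking a coheir extension and using Ramsey together with the uniformity afforded by (2), one produces a Morley sequence $\langle a_i : i < \omega \rangle$ with $a_0 = a$ that after suitable extraction remains $Mb$-indiscernible; transferring the witnessing sequence to the $b$-side then contradicts $a \ind^K_M b$. This symmetry step, which requires delicate bookkeeping between two invariant types and indiscernibility over a larger base, is the main obstacle. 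For $(3) \Rightarrow (5)$, the independence theorem follows by a zig-zag amalgamation: build Morley sequences realizing $\mathrm{tp}(c/MA)$ and $\mathrm{tp}(c'/MB)$ alternately, invoke symmetry to swap the invariance direction at each stage, and produce the desired amalgam $c''$ by compactness. For $(5) \Rightarrow (1)$, the Chernikov--Ramsey criterion applies directly: a witness to SOP$_1$ from Fact \ref{karyversion} would supply two $M$-conjugate realizations that the independence theorem forces to amalgamate, contradicting the required inconsistency.

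For $(1) \Leftrightarrow (4)$, cofinality of the set of ``good'' submodels is immediate from downward L\"owenheim--Skolem applied to $M$, so the content lies in closure under chains of length at most $|T|^+$. Given an increasing chain $\langle N_i : i < \delta \rangle$ over each of which $p$ does not Kim-divide, if $p$ were to Kim-divide over $\bigcup_i N_i$, then at each successor stage one could imitate the $(1) \Rightarrow (2)$ construction to build an SOP$_1$-witnessing tree, extracting an indiscernible via Erd\H{o}s--Rado from the chain of local failures. Conversely, once NSOP$_1$ is assumed, Kim's lemma plus a pigeonhole and L\"owenheim--Skolem argument upgrades the existence of a single good $N$ to a club of them. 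The Erd\H{o}s--Rado extraction in the chain-closure direction is the technical crux of this equivalence.
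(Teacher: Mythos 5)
The paper does not prove this statement: it is quoted as a Fact, with the proof outsourced to \cite[Theorem 8.1]{kaplan2017kim} and \cite[Theorem 1.1]{24-Kaplan2017}, so there is no in-paper argument to compare yours against. Judged on its own terms, your proposal is a plausible high-level road map\textemdash the cycle $(1)\Rightarrow(2)\Rightarrow(3)\Rightarrow(5)\Rightarrow(1)$ plus a separate treatment of $(4)$ is essentially how the literature organizes these implications\textemdash but at least two of the steps as written contain genuine gaps rather than compressible detail.

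First, in $(1)\Rightarrow(2)$ you assert that interleaving Morley sequences in two global $M$-invariant types $q_{0},q_{1}\supseteq\text{tp}(b/M)$ ``forces $c_{i,0}\equiv_{\overline{c}_{<i}}c_{i,1}$'' by invariance. It does not: invariance only guarantees that realizations of $q_{0}|_{M\overline{c}_{<i}}$ and $q_{1}|_{M\overline{c}_{<i}}$ each realize $\text{tp}(b/M)$, and two distinct $M$-invariant types extending the same type over $M$ will in general disagree over the larger set $M\overline{c}_{<i}$. This is precisely the difficulty the known proof must overcome, and it does so via a tree construction (building an $s$-indiscernible tree in which the two types govern the branching, then reading the SOP$_{1}$ configuration off the tree), not by a linear interleaving. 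Second, the equivalence with $(4)$ is the main theorem of \cite{24-Kaplan2017}; the hard direction is $(1)\Rightarrow(4)$, and it is not obtained by ``Kim's lemma plus a pigeonhole and L\"owenheim--Skolem argument''\textemdash even the existence of a single small $N\prec M$ over which $p$ does not Kim-divide requires substantial new machinery there, and the club statement more still. Finally, for $(2)\Rightarrow(3)$ you correctly flag symmetry as the main obstacle but supply no mechanism for it; the known argument again runs through the tree technology ($s$-indiscernible trees and Morley trees), which your sketch never introduces. So the proposal identifies a reasonable skeleton, but the load-bearing steps are either incorrect as stated or missing.
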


\begin{rem} \label{local character for bigger cardinals}
Because NSOP$_{1}$ is preserved by naming constants, we also see that if $\kappa \geq |T|$ and we are given any model $M \models T$ with $|M| \geq \kappa$ and type $p \in S(M)$, the set $\{N \prec M : |N| = \kappa \text{ and } p \text{ does not Kim-divide over }N\}$ is a club subset of $[M]^{\kappa}$.  This follows by choosing an arbitrary $M_{0} \prec M$ of size $\kappa$ and applying Fact \ref{basic kimindep facts}(3) to the theory $T(M_{0})$ obtained from $T$ by adding constants for $M_{0}$.  
\end{rem}

We will make extensive use of the following additional properties of Kim-independence in NSOP$_{1}$ theories:

\begin{fact} \label{fact:Kim Morley is consistent}
Suppose that $T$ is NSOP$_{1}$ and $M \models T$. 
\begin{enumerate}
\item Extension:  if $a \ind^{K}_{M} b$, then given any $c$, there is $a' \equiv_{Mb} a$ such that $a' \ind^{K}_{M} bc$ \cite[Proposition 3.20]{kaplan2017kim}.
\item Consistency along $\ind^{K}$-Morley sequences:  suppose $\langle a_{i} : i<\omega \rangle$ is
an $\ind^{K}$-Morley sequence over $M$.  Then if $\varphi\left(x,a_{0}\right)$
does not Kim-divide over $M$, then $\{\varphi\left(x,a_{i}\right) : i<\omega\}$
does not Kim-divide over $M$, and in particular it is consistent \cite[Lemma 7.6]{kaplan2017kim}.
\item Strengthened independence theorem:  Suppose $c_{0} \equiv_{M} c_{1}$, $c_{0} \ind^{K}_{M} a$, $c_{1} \ind^{K}_{M} b$ and $a \ind^{K}_{M} b$.  Then there is $c \models \text{tp}(c_{0}/Ma) \cup \text{tp}(c_{1}/Mb)$ such that $a \ind^{K}_{M} bc$, $b \ind^{K}_{M} ac$, and $c \ind^{K}_{M} ab$ \cite[Theorem 2.13]{kruckman2018generic}.
\end{enumerate}
\end{fact}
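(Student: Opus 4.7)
The plan is to prove the three parts in sequence, leveraging Kim's lemma (Fact \ref{basic kimindep facts}(2)), symmetry (Fact \ref{basic kimindep facts}(3)), and the independence theorem over models (Fact \ref{basic kimindep facts}(5)) as the primary tools. The parts reinforce one another: extension in (1) supports consistency in (2), and both feed into the strengthened amalgamation in (3).

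For part (1), given $a \ind^{K}_{M} b$ and an arbitrary $c$, I would take a global $M$-invariant extension $q$ of $\text{tp}(bc/M)$ (via finite satisfiability in $M$), and let $\langle (b_i, c_i) : i < \omega \rangle$ be a Morley sequence in $q$ over $M$ with $(b_0, c_0) = (b, c)$. By Kim's lemma, producing $a' \equiv_{Mb} a$ with $a' \ind^{K}_{M} bc$ reduces to finding $a'$ such that $\langle (b_i, c_i) : i < \omega \rangle$ can be taken to be $Ma'$-indiscernible. I would realize this by combining $\text{tp}(a/Mb)$ with the partial type expressing indiscernibility of the sequence over $a$; any finite obstruction, projected to the $b$-coordinates, would contradict $a \ind^{K}_{M} b$ via Kim's lemma applied to $\langle b_i : i < \omega \rangle$.

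For part (2), I would induct on $n$, constructing $b_n \models \{\varphi(x;a_i) : i \leq n\}$ with $b_n \ind^{K}_{M} a_{\leq n}$. Since $\varphi(x;a_0)$ doesn't Kim-divide, extension yields some $b_0 \models \varphi(x;a_0)$ with $b_0 \ind^{K}_{M} a_0$. For the step, since $a_{n+1} \equiv_M a_0$ I pick $b^* \models \varphi(x;a_{n+1})$ with $b^* \ind^{K}_{M} a_{n+1}$ and $b^* \equiv_M b_n$; as $a_{n+1} \ind^{K}_{M} a_{\leq n}$ holds by the $\ind^{K}$-Morley hypothesis, the independence theorem amalgamates $b_n$ and $b^*$ into the required $b_{n+1}$, realizing $\varphi(x;a_i)$ for all $i \leq n+1$ and maintaining Kim-independence from $a_{\leq n+1}$.

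For part (3), first apply the standard independence theorem to obtain $c \models \text{tp}(c_0/Ma) \cup \text{tp}(c_1/Mb)$ with $c \ind^{K}_{M} ab$. The additional conditions $a \ind^{K}_{M} bc$ and $b \ind^{K}_{M} ac$ are obtained by combining symmetry with repeated amalgamation: use extension to upgrade $a$ and $b$ appropriately while preserving the relevant types over $Mc$, then reapply the independence theorem to close the triangle symmetrically, arranging the three Kim-independence statements to hold simultaneously. The main obstacle across all three parts is the failure of base monotonicity for $\ind^{K}$: a formula $\varphi(x;bc)$ may Kim-divide over $M$ without its restriction to $Mb$ exhibiting any visible problem, and this asymmetry is exactly the phenomenon the rest of the paper is devoted to analyzing.
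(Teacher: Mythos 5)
The paper itself does not prove this statement: it is quoted as a Fact with citations to \cite{kaplan2017kim} and \cite{kruckman2018generic}, and the closest in-paper argument is the proof of Lemma \ref{chain condition for indk}, which the authors describe as a slight elaboration of the proof of part (2). Your sketches of parts (1) and (2) are essentially correct and follow the cited proofs: for (1), the reduction via Kim's lemma to realizing $\bigcup_i \text{tp}(a/Mb_i)$ along the projected invariant Morley sequence and then extracting indiscernibility by Ramsey, compactness, and an automorphism is the standard argument (your phrase about ``projecting a finite obstruction to the $b$-coordinates'' is not quite how the extraction goes, but the mechanism you invoke is the right one); for (2), your induction is exactly the scheme of Lemma \ref{chain condition for indk}, with the one unstated detail that the required $b^{*}$ with $b^{*}\equiv_{M} b_{n}$, $b^{*}\models\varphi(x;a_{n+1})$, $b^{*}\ind^{K}_{M}a_{n+1}$ must be obtained by conjugating $b_{n}$ itself by an automorphism over $M$ carrying $a_{0}$ to $a_{n+1}$, using $b_{n}\ind^{K}_{M}a_{0}$ and $b_{n}\models\varphi(x;a_{0})$.

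Part (3), however, has a genuine gap. The clauses $a\ind^{K}_{M}bc$ and $b\ind^{K}_{M}ac$ are the entire content of the strengthening, and ``use extension to upgrade $a$ and $b$ while preserving the relevant types over $Mc$, then reapply the independence theorem to close the triangle'' does not describe a working mechanism. If you apply extension to replace $a$ by some $a'\equiv_{Mc}a$ with $a'\ind^{K}_{M}bc$, you destroy $\text{tp}(a/Mb)$ (and hence the hypothesis $a\ind^{K}_{M}b$ and the conclusion $c\equiv_{Mb}c_{1}$ relative to the new configuration), and restoring it is itself an amalgamation problem of the same difficulty; there is no evident way to make this process terminate. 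Deriving $a\ind^{K}_{M}bc$ from $a\ind^{K}_{M}b$ and $c\ind^{K}_{M}ab$ is precisely the kind of inference that base monotonicity or transitivity would license, and neither is available here (transitivity is the main theorem of this paper, proved only over models, and it relies on the present Fact as an ingredient, so invoking anything of that shape would be circular). The actual proof in \cite[Theorem 2.13]{kruckman2018generic} requires a different idea: one realizes the relevant type of $c$ along an entire invariant Morley sequence in $\text{tp}(b/M)$ that has been made $Ma$-indiscernible via the chain condition, so that Kim's lemma certifies $ac\ind^{K}_{M}b$ directly from the indiscernibility of a witnessing sequence over $Mac$, and symmetrically for the other clause. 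That step, which converts indiscernibility of a generic sequence over the amalgam into the extra independence statements, is missing from your proposal.
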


We will need the following chain condition for $\ind^{K}$-Morley sequences, which is a slight elaboration of the proof of Fact \ref{fact:Kim Morley is consistent}(2).  

\begin{lem} \label{chain condition for indk}
Suppose $T$ is NSOP$_{1}$ and $M \models T$.  If $a \ind^{K}_{M} b_{0}$ and $I = \langle b_{i} : i < \omega \rangle$ is an $\ind^{K}$-Morley sequence over $M$, then there is $a' \equiv_{Mb_{0}} a$ such that $I$ is $Ma'$-indiscernible and $a' \ind^{K}_{M} I$.
\end{lem}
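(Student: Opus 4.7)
Following the hint that this is a ``slight elaboration'' of the proof of Fact~\ref{fact:Kim Morley is consistent}(2), the plan is to run the same inductive construction while additionally tracking indiscernibility. Specifically, I would build by induction on $n<\omega$ a sequence $(a_n)_{n<\omega}$ satisfying (i) $a_n \equiv_{Mb_0} a$, (ii) $a_n \ind^K_M b_0 b_1 \ldots b_n$, and (iii) $\langle b_0,\ldots,b_n\rangle$ is $Ma_n$-indiscernible, together with the coherence $a_{n+1} \equiv_{Mb_0 \ldots b_n} a_n$.

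Set $a_0 := a$. For the inductive step, $M$-indiscernibility of $I$ yields $\sigma \in \mathrm{Aut}(\mathbb{M}/M)$ with $\sigma(b_i) = b_{i+1}$ for $0 \leq i \leq n$; let $a_n^{\ast} = \sigma(a_n)$. Then (ii) and (iii) hold for $a_n^{\ast}$ over the shifted tuple $\langle b_1,\ldots,b_{n+1}\rangle$, and (iii) applied to $a_n$ gives $a_n \equiv_{M b_1 \ldots b_n} a_n^{\ast}$. Now apply the independence theorem (Fact~\ref{basic kimindep facts}(5)) with base $M$, realizers $c = a_n$, $c' = a_n^{\ast}$, and sides $A = b_{\leq n}$, $B = b_{n+1}$: $A \ind^K_M B$ follows from the $\ind^K$-Morley property plus symmetry, $c \ind^K_M A$ is (ii), $c' \ind^K_M B$ is the $\sigma$-image of (ii), and $c \equiv_M c'$ is immediate. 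The resulting $a_{n+1}$ satisfies (i), (ii) at level $n+1$, and the coherence, while (iii) at level $n+1$ must be verified by combining the two partial indiscernibilities carried over from $a_n$ (via $a_{n+1} \equiv_{Mb_{\leq n}} a_n$) and from $a_n^{\ast}$ (via $a_{n+1} \equiv_{Mb_{n+1}} a_n^{\ast}$), exploiting the $\sigma$-shift and the $M$-indiscernibility of $I$.

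Having built the sequence, the coherence condition makes $\bigcup_n \mathrm{tp}(a_n/M b_{\leq n})$ a consistent partial type; any realization $a'$ satisfies $a' \equiv_{Mb_{\leq n}} a_n$ for each $n$, so (i) gives $a' \equiv_{Mb_0} a$, (iii) propagates upward to make $I$ be $Ma'$-indiscernible, and (ii) guarantees that every formula of $\mathrm{tp}(a'/MI)$ --- which involves only finitely many $b_i$ --- fails to Kim-fork over $M$, hence $a' \ind^K_M I$.

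The principal obstacle is the verification of (iii) at the inductive step, specifically for sub-tuples of $\langle b_0,\ldots,b_{n+1}\rangle$ containing both $b_0$ and $b_{n+1}$: the independence theorem directly yields indiscernibility only over the two overlapping segments $\{0,\ldots,n\}$ and $\{1,\ldots,n+1\}$, and carefully bridging these ``cross'' sub-tuples via the $\sigma$-shift and the Morley property of $I$ is precisely the additional work beyond the proof of Fact~\ref{fact:Kim Morley is consistent}(2).
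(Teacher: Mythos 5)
Your inductive scheme is essentially the paper's, but with one extra clause --- condition (iii), that $\langle b_0,\ldots,b_n\rangle$ be $Ma_n$-indiscernible at every stage --- and that clause is exactly where the argument breaks. The independence theorem hands you $a_{n+1}$ with $a_{n+1}\equiv_{Mb_{\leq n}} a_n$ and $a_{n+1}\equiv_{Mb_{n+1}} a_n^{\ast}$, i.e.\ it controls $\mathrm{tp}(a_{n+1}/Mb_{\leq n})$ and $\mathrm{tp}(a_{n+1}/Mb_{n+1})$ separately, but it says nothing about formulas $\varphi(x;b_i,b_{n+1})$ involving $a_{n+1}$ together with both an old $b_i$ and the new $b_{n+1}$: the union of the two types being amalgamated contains no such formulas, and different completions can decide $\varphi(a_{n+1};b_0,b_{n+1})$ and $\varphi(a_{n+1};b_0,b_1)$ differently. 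So the ``cross'' sub-tuples you flag as the principal obstacle are not a matter of careful bookkeeping --- they are genuinely out of reach of this application of the independence theorem. Nor can you repair it by enlarging the side $B$ to $b_1\cdots b_{n+1}$, since then you would need $b_{\leq n}\ind^{K}_{M} b_1\cdots b_{n+1}$, which fails (the sides overlap), and taking $A=b_0$, $B=b_{\geq 1}$ would require $b_0\ind^{K}_{M}b_1\cdots b_{n+1}$, which is not implied by the $\ind^{K}$-Morley hypothesis (the example in Section 3.1 of the paper shows such ``reversed'' independence can fail). Since your final step uses (iii) to conclude that $I$ is $Ma'$-indiscernible, the proof as written does not close.

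The fix is to drop (iii) from the induction altogether, which is what the paper does: maintain only $a_n\models\bigcup_{i\leq n}p(x;b_i)$ and $a_n\ind^{K}_{M}b_{\leq n}$, realize the union by compactness and finite character to get $a_{\ast}\models\bigcup_{i<\omega}p(x;b_i)$ with $a_{\ast}\ind^{K}_{M}I$, and only then recover indiscernibility: by Ramsey and compactness extract an $Ma_{\ast}$-indiscernible sequence $I'$ locally based on $I$ over $Ma_{\ast}$. Because $I$ is $M$-indiscernible, $I'\equiv_{M}I$, each $b'_i$ still satisfies $a_{\ast}b'_i\equiv_{M}ab_0$, and $a_{\ast}\ind^{K}_{M}I'$ since every formula of $\mathrm{tp}(a_{\ast}/MI')$ is, up to an $M$-automorphism of its parameters, a non-Kim-forking formula of $\mathrm{tp}(a_{\ast}/MI)$ and Kim-forking is invariant. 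An automorphism over $M$ carrying $I'$ back to $I$ then yields the desired $a'$. Everything else in your write-up (the choice of $a_n^{\ast}$ as a shift of $a_n$, the verification of the hypotheses of the independence theorem, the coherence and the passage to the limit) matches the paper and is correct.
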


\begin{proof}
Let $p(x;b_{0}) = \text{tp}(a/Mb_{0})$.  By induction, we will choose $a_{n}$ such that $a_{n} \models \bigcup_{i \leq n} p(x;b_{i})$ and $a_{n} \ind^{K}_{M} b_{\leq n}$.  For $n = 0$, we put $a_{0} = a$.  Given $a_{n}$, pick $a'$ such that $a'b_{n+1} \equiv_{M} ab_{0}$.  Then, by invariance, we have $a' \ind^{K}_{M} b_{n+1}$ and, additionally, $b_{n+1} \ind^{K}_{M} b_{\leq n}$, and $a_{n} \ind^{K}_{M} b_{\leq n}$.  As $a' \equiv_{M} a \equiv_{M} a_{n}$, we may apply the independence theorem to find $a_{n+1}$ such that $a_{n+1} \equiv_{Mb_{\leq n}} a_{n}$, $a_{n+1} \equiv_{Mb_{n+1}} a'$, and $a_{n+1} \ind^{K}_{M} b_{\leq n+1}$.  In particular, $a_{n+1} \models \bigcup_{i \leq n+1} p(x;b_{i})$, completing the induction.  

By compactness and finite character, we can find $a_{*} \models \bigcup_{i < \omega} p(x;b_{i})$ such that $a_{*} \ind^{K}_{M} I$.  By compactness, Ramsey, an automorphism, we may assume $I$ is $Ma_{*}$-indiscernible, completing the proof.
\end{proof}

\subsection{Generalized indiscernibles and a class of trees}

The construction of tree Morley sequences goes by way of an inductive construction of approximations to Morley trees indexed by a certain class of trees.  Although the initial set-up is somewhat cumbersome, the definitions allow us to give simple and streamlined constructions.  It will be convenient to use the notation and basic definitions that accompany the trees $\mathcal{T}_{\alpha}$ from \cite[Section 5.1]{kaplan2017kim}.  The subsection below consists entirely of this notation and these definitions which are reproduced for the readers' convenience.  

For an ordinal $\alpha$, let the language $L_{s,\alpha}$ be $\langle \unlhd, \wedge, <_{lex}, (P_{\beta})_{\beta \leq \alpha} \rangle$.  We may view a tree with $\alpha$ levels as an $L_{s,\alpha}$-structure by interpreting $\unlhd$ as the tree partial order, $\wedge$ as the binary meet function, $<_{lex}$ as the lexicographic order, and $P_{\beta}$ interpreted to define level $\beta$.  Our trees will be understood to be an $L_{s,\alpha}$-structure for some appropriate $\alpha$.  We recall the definition of a class of trees $\mathcal{T}_{\alpha}$ below:

\begin{defn}
Suppose $\alpha$ is an ordinal.  We define $\mathcal{T}_{\alpha}$ to be the set of functions $f$ such that 
\begin{itemize}
\item $\text{dom}(f)$ is an end-segment of $\alpha$ of the form $[\beta,\alpha)$ for $\beta$ equal to $0$ or a successor ordinal.  If $\alpha$ is a successor, we allow $\beta = \alpha$, i.e. $\text{dom}(f) = \emptyset$.
\item $\text{ran}(f) \subseteq \omega$.
\item finite support:  the set $\{\gamma \in \text{dom}(f) : f(\gamma) \neq 0\}$ is finite.    
\end{itemize}
We interpret $\mathcal{T}_{\alpha}$ as an $L_{s,\alpha}$-structure by defining 
\begin{itemize}
\item $f \unlhd g$ if and only if $f \subseteq g$.  Write $f \perp g$ if $\neg(f \unlhd g)$ and $\neg(g \unlhd f)$.  
\item $f \wedge g = f|_{[\beta, \alpha)} = g|_{[\beta, \alpha)}$ where $\beta = \text{min}\{ \gamma : f|_{[\gamma, \alpha)} =g|_{[\gamma, \alpha)}\}$, if non-empty (note that $\beta$ will not be a limit, by finite support). Define $f \wedge g$ to be the empty function if this set is empty (note that this cannot occur if $\alpha$ is a limit).  
\item $f <_{lex} g$ if and only if $f \vartriangleleft g$ or, $f \perp g$ with $\text{dom}(f \wedge g) = [\gamma +1,\alpha)$ and $f(\gamma) < g(\gamma)$
\item For all $\beta \leq \alpha$, $P_{\beta} = \{ f \in \mathcal{T}_{\alpha} : \text{dom}(f) = [\beta, \alpha)\}$.  
\end{itemize}
\end{defn}

\begin{rem}
Condition (1) in the definition of $\mathcal{T}_{\alpha}$ was stated incorrectly in the first arXiv version of \cite{kaplan2017kim} via the weaker requirement that $\text{dom}(f)$ is an end-segment, non-empty if $\alpha$ is limit.  There, and below, the inductive constructions assume that $\mathcal{T}_{\alpha+1}$ consists of the empty function (the root) and countably many copies of $\mathcal{T}_{\alpha}$ given by $\{\langle i \rangle \frown \eta : i< \omega,\eta \in \mathcal{T}_{\alpha}\}$ (where this concatenation is defined below in Definition \ref{concatenation}).  But if $\alpha$ is a limit, this becomes false if we allow functions with domain $\{\alpha\}$ since the empty function is not an element of $\mathcal{T}_{\alpha}$ and therefore the function $\alpha \mapsto i$ is not of the form $\langle i \rangle \frown \eta$ for some $\eta \in \mathcal{T}_{\alpha}$.  This is rectified by omitting functions whose domain is an end-segment of the form $[\beta,\alpha)$ for $\beta$ limit.	\end{rem}

\begin{defn} \label{concatenation}
Suppose $\alpha$ is an ordinal.  
\begin{enumerate}
\item (Restriction) If $w \subseteq \alpha \setminus \text{lim}(\alpha)$, the \emph{restriction of} $\mathcal{T}_{\alpha}$ \emph{to the set of levels  }$w$ is given by 
$$
\mathcal{T}_{\alpha} \upharpoonright w = \{\eta \in \mathcal{T}_{\alpha} : \min (\text{dom}(\eta)) \in w \text{ and }\beta \in \text{dom}(\eta) \setminus w \implies \eta(\beta) = 0\}.
$$
\item (Concatenation)  If $\eta \in \mathcal{T}_{\alpha}$, $\text{dom}(\eta) = [\beta+1,\alpha)$ for some $\beta \in \alpha \setminus \text{lim}(\alpha)$, and $i < \omega$, let $\eta \frown \langle i \rangle$ denote the function $\eta \cup \{(\beta,i)\}$.  We define $\langle i \rangle \frown \eta \in \mathcal{T}_{\alpha+1}$ to be $\eta \cup \{(\alpha,i)\}$.  When we write $\langle i \rangle \in \mathcal{T}_{\alpha+1}$ by itself, we use this to denote the function $\{(\alpha,i)\}$.
\item (Canonical inclusions) If $\alpha < \beta$, we define the map $\iota_{\alpha \beta} : \mathcal{T}_{\alpha} \to \mathcal{T}_{\beta}$ by $\iota_{\alpha \beta}(f) = f \cup \{(\gamma, 0) : \gamma \in \beta \setminus \alpha\}$.
\item (The all $0$'s path) If $\beta < \alpha$, then $\zeta_{\beta}$ denotes the function with $\text{dom}(\zeta_{\beta}) = [\beta, \alpha)$ and $\zeta_{\beta}(\gamma) = 0$ for all $\gamma \in [\beta,\alpha)$.  This defines an element of $\mathcal{T}_{\alpha}$ if and only if $\beta \in \alpha \setminus \text{lim}(\alpha)$.  
\end{enumerate}
\end{defn}

We will most often be interested in collections of tuples indexed by $\mathcal{T}_{\alpha}$ and, if $(a_{\eta})_{\eta \in \mathcal{T}_{\alpha}}$ is such a collection and $\eta \in \mathcal{T}_{\alpha}$, we will write $a_{\unrhd \eta}$ and $a_{\vartriangleright \eta}$ for tuples enumerating the elements indexed by elements of $\mathcal{T}_{\alpha}$ above or strictly above $\eta$ in the tree partial order, respectively.  Note that if $\beta < \alpha$ is a limit ordinal and $\eta \in \mathcal{T}_{\alpha}$ has $\text{dom}(\eta) = [\beta+1,\alpha)$, then $\beta \frown \langle i \rangle$ is a function whose domain is $[\beta,\alpha)$ and is therefore not in $\mathcal{T}_{\alpha}$.  If $(a_{\eta})_{\eta \in \mathcal{T}_{\alpha}}$ is a collection of tuples indexed by $\mathcal{T}_{\alpha}$, we will abuse notation and write $a_{\unrhd \eta \frown \langle i \rangle}$ for the tuple that enumerates $\{a_{\nu} : \nu \in \mathcal{T}_{\alpha}, \eta \frown \langle i \rangle \subseteq \nu\}$ and likewise for $a_{\unrhd \zeta_{\beta}}$.  

We additionally remark that the concatention notation is only unambiguous once we have specified in which tree the element lives\textemdash for example, $\langle i \rangle \frown \langle j \rangle$ can denote an element of $\mathcal{T}_{\alpha+2}$ when $\langle j \rangle \in \mathcal{T}_{\alpha+1}$ or an element of $\mathcal{T}_{\alpha+1}$ if $\langle i \rangle \in \mathcal{T}_{\alpha+1}$, but this notation reads unambiguously once we have specified in which tree we are referring to $\langle i \rangle \frown \langle j \rangle$.  In the arguments below, the intended meaning of concatenation is clear from context and no confusion will arise.  

The function $\iota_{\alpha \beta}$ includes $\mathcal{T}_{\alpha}$ into $\mathcal{T}_{\beta}$ by adding zeros to the bottom of every node in $\mathcal{T}_{\alpha}$.  Clearly if $\alpha < \beta < \gamma$, then $\iota_{\alpha \gamma} = \iota_{\beta \gamma} \circ \iota_{\alpha \beta}$.  If $\beta$ is a limit, then $\mathcal{T}_{\beta}$ is the direct limit of the $\mathcal{T}_{\alpha}$ for $\alpha < \beta$ along these maps.  

\begin{defn}  Suppose $I$ is an $L'$-structure, where $L'$ is some language. 
\begin{enumerate}
\item  We say that $(a_{i} : i \in I)$ is a set of $I$\emph{-indexed indiscernibles over} $A$ if whenever 

$(s_{0}, \ldots, s_{n-1})$, $(t_{0}, \ldots, t_{n-1})$ are tuples from $I$ with 
$$
\text{qftp}_{L'}(s_{0}, \ldots, s_{n-1}) = \text{qftp}_{L'}(t_{0}, \ldots, t_{n-1}),
$$
then we have
$$
\text{tp}(a_{s_{0}},\ldots, a_{s_{n-1}}/A) = \text{tp}(a_{t_{0}},\ldots, a_{t_{n-1}}/A).
$$
\item In the case that $L' = L_{s,\alpha}$ for some $\alpha$, we say that an $I$-indexed indiscernible is $\emph{s-indiscernible}$.  As the only $L_{s,\alpha}$-structures we will consider will be trees, we will often refer to $I$-indexed indiscernibles in this case as \emph{s-indiscernible trees}.  
\item We say that $I$-indexed indiscernibles have the \emph{modeling property} if, given any $(a_{i} : i \in I)$ from $\mathbb{M}$ and set of parameters $A$, there is an \(I\)-indexed indiscernible \((b_{i} : i \in I)\) in $\mathbb{M}$ \emph{locally based} on \((a_{i} : i \in I)$ over $A$ -- i.e., given any finite set of formulas \(\Delta\) from \(L(A)\) and a finite tuple \((t_{0}, \ldots, t_{n-1})\) from \(I\), there is a tuple \((s_{0}, \ldots, s_{n-1})\) from \(I\) such that 
\[
\text{qftp}_{L'} (t_{0}, \ldots, t_{n-1}) =\text{qftp}_{L'}(s_{0}, \ldots , s_{n-1})
\]
and also 
\[
\text{tp}_{\Delta}(b_{t_{0}}, \ldots, b_{t_{n-1}}) = \text{tp}_{\Delta}(a_{s_{0}}, \ldots, a_{s_{n-1}}).
\]

\end{enumerate}
\end{defn}

Recall that, given a set $X$, we write $[X]^{<\omega}$ to denote the set of finite subsets of $X$.  

\begin{defn} \label{spead out def}
Suppose $(a_{\eta})_{\eta \in \mathcal{T}_{\alpha}}$ is a tree of tuples, and $C$ is a set of parameters.  
\begin{enumerate}
\item We say that $(a_{\eta})_{\eta \in \mathcal{T}_{\alpha}}$ is \emph{spread out over} C if for all $\eta \in \mathcal{T}_{\alpha}$ with $\text{dom}(\eta) =[\beta+1,\alpha)$ for some $\beta \in \alpha$, there is a global $C$-invariant type $q_{\eta} \supseteq \text{tp}(a_{\unrhd \eta \frown \langle 0 \rangle}/C)$ such that $(a_{\unrhd \eta \frown \langle i \rangle})_{i < \omega}$ is a Morley sequence over $C$ in $q_{\eta}$.  
\item Suppose $(a_{\eta})_{\eta \in \mathcal{T}_{\alpha}}$ is a tree which is spread out and $s$-indiscernible over $C$ and for all $w,v \in [\alpha \setminus \text{lim}(\alpha)]^{<\omega}$ with $|w| = |v|$,
$$
(a_{\eta})_{\eta \in \mathcal{T}_{\alpha} \upharpoonright w} \equiv_{C} (a_{\eta})_{\eta \in \mathcal{T}_{\alpha} \upharpoonright v}
$$
then we say that $(a_{\eta})_{\eta \in \mathcal{T}_{\alpha}}$ is a \emph{Morley tree} over $C$.  
\item A \emph{tree Morley sequence} over $C$ is a $C$-indiscernible sequence of the form $(a_{\zeta_{\beta}})_{\beta \in \alpha \setminus \text{lim}(\alpha)}$ for some Morley tree $(a_{\eta})_{\eta \in \mathcal{T}_{\alpha}}$ over $C$.  
\end{enumerate}
\end{defn}

\begin{rem}
Note that in Definition \ref{spread out def}(1), it is possible that $\mathrm{dom}(\eta) = [\beta + 1,\alpha)$ for a limit ordinal $\beta \in \alpha$, in which case $\eta \frown \langle i \rangle$, defined to be the function $\eta \cup \{(\beta, i)\}$, is not an element of $\mathcal{T}_{\alpha}$.  Nonetheless, the tuple $a_{\unrhd \eta \frown \langle i \rangle}$ still makes sense as the tuple whose elements are indexed by functions in the tree $\mathcal{T}_{\alpha}$ containing $\eta \frown \langle i \rangle$.  See the remarks after Definition \ref{concatenation}.  
\end{rem}

\begin{fact} \label{modeling}
\text{ }
\begin{enumerate}
\item For any $\alpha$, $\mathcal{T}_{\alpha}$-indexed indiscernibles have the modeling property \cite[Theorem 4.3]{KimKimScow} \cite[Corollary 5.6]{kaplan2017kim}.
\item Given a model $M \models T$, there is a cardinal $\kappa$ such that if $(a_{\eta})_{\eta \in \mathcal{T}_{\kappa}}$ is a tree of tuples, spread out and $s$-indiscernible over $M$, then there is a Morley tree $(b_{\eta})_{\eta \in \mathcal{T}_{\omega}}$ such that for all $w \in [\omega]^{<\omega}$, 
$$
(a_{\eta})_{\eta \in \mathcal{T}_{\kappa} \upharpoonright v} \equiv_{M} (b_{\eta})_{\eta \in \mathcal{T}_{\omega} \upharpoonright w}.  
$$
for some $v \in [\kappa \setminus \text{lim}(\kappa)]^{<\omega}$ \cite[Lemma 5.10]{kaplan2017kim}.
\end{enumerate}
\end{fact}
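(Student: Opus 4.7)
The statement breaks into a modeling property (part 1) and a ``Morley collapse'' (part 2), each handled by its own Ramsey-theoretic tool.

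For (1), my plan is to invoke the general framework, due to Scow and refined by Kim--Kim--Scow, that transfers a structural Ramsey property for a class of finite $L'$-structures to the modeling property for $I$-indexed indiscernibles. The concrete input required is that the class of finite $L_{s,\alpha}$-substructures of $\mathcal{T}_{\alpha}$ is a Ramsey class. For $\alpha = \omega$, this is essentially Milliken's tree theorem, appropriately augmented with the level predicates $P_{\beta}$ and the lexicographic order; the general case can be reduced to the $\omega$ case (or handled by a direct induction on $\alpha$), making careful use of the finite-support convention built into $\mathcal{T}_{\alpha}$. Once the Ramsey statement is in hand, one builds the locally based indiscernible tree by the standard coloring-plus-compactness argument: for any finite set $\Delta \subseteq L(A)$ of formulas and any quantifier-free tree type $p$ of length $n$, color the $n$-tuples from $\mathcal{T}_{\alpha}$ realizing $p$ by the $\Delta$-type of the corresponding tuple from $(a_{\eta})_{\eta \in \mathcal{T}_{\alpha}}$; Ramsey applied simultaneously across all such $(\Delta, p)$ via compactness produces the required indiscernible tree locally based on $(a_{\eta})$ over $A$.

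For (2), the strategy is iterated Erd\H{o}s--Rado. Take $\kappa$ to be a bound at which Erd\H{o}s--Rado provides an infinite monochromatic set for colorings of $n$-tuples in at most $2^{|T|+|M|}$ colors, simultaneously for all $n<\omega$; e.g., $\kappa = \beth_{\omega}((2^{|T|+|M|})^{+})$ more than suffices. For each $n$, color $w \in [\kappa \setminus \text{lim}(\kappa)]^{n}$ by $\text{tp}((a_{\eta})_{\eta \in \mathcal{T}_{\kappa}\upharpoonright w}/M)$; the number of colors is at most $2^{|T|+|M|}$. Iterated Erd\H{o}s--Rado then yields an infinite $W \subseteq \kappa \setminus \text{lim}(\kappa)$ such that on $|w|$-element subsets of $W$ this coloring is constant for every $n$. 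Enumerating $W$ in order type $\omega$ induces an embedding $\iota: \mathcal{T}_{\omega} \hookrightarrow \mathcal{T}_{\kappa}$ with image $\mathcal{T}_{\kappa}\upharpoonright W$; setting $b_{\eta} = a_{\iota(\eta)}$ defines the candidate Morley tree. Spreading out and $s$-indiscernibility are inherited from $(a_{\eta})$ because restriction to non-limit levels and order-preserving re-indexing commute with $\unlhd$, $\wedge$, $<_{lex}$, and the $P_{\beta}$, while the monochromaticity delivered by Erd\H{o}s--Rado is exactly the level-type condition in Definition \ref{spead out def}(2).

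The main obstacle in both parts is the bookkeeping imposed by the nonstandard tree $\mathcal{T}_{\alpha}$: the finite-support condition, the exclusion of limit-level starts from $\text{dom}(\eta)$, and the convention on $a_{\unrhd \eta \frown \langle i \rangle}$ at limit $\beta$ mean that when one restricts to a set $W$ of non-limit levels and identifies $\mathcal{T}_{\kappa}\upharpoonright W$ with $\mathcal{T}_{\omega}$, one must verify that the witnessing global $M$-invariant types $q_{\eta}$ at each node of the restricted tree really give Morley sequences over $M$ in the new indexing. This is routine once the setup is right, but it is the step that has to be carried out with care.
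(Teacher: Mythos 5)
This statement is quoted in the paper as a Fact, with the proofs deferred to \cite{KimKimScow} and \cite{kaplan2017kim}, so there is no in-paper proof to compare against; I will assess your reconstruction against the standard arguments. Your part (1) is a correct outline of the cited proof: the modeling property is obtained from the Ramsey property of the class of finite subtrees via the Scow/Kim--Kim--Scow transfer, with Milliken's tree theorem as the combinatorial engine and the finite-support convention reducing general $\alpha$ to finite level-sets.

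Part (2), however, has a genuine gap. You claim that iterated Erd\H{o}s--Rado produces a single infinite set $W \subseteq \kappa \setminus \mathrm{lim}(\kappa)$ on which the colorings $w \mapsto \mathrm{tp}((a_{\eta})_{\eta \in \mathcal{T}_{\kappa}\upharpoonright w}/M)$ are constant \emph{simultaneously for every} $n = |w|$. That is the partition relation $\kappa \to (\omega)^{<\omega}_{\lambda}$, whose least witness is the $\omega$-Erd\H{o}s cardinal; its existence is not provable in ZFC, and no value of $\kappa$ of the form $\beth_{\omega}((2^{|T|+|M|})^{+})$ (or any other ZFC-definable bound) delivers it. Erd\H{o}s--Rado only gives homogeneity for one exponent $n$ at a time, each application shrinking the set, and after $\omega$ rounds nothing survives. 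Consequently your definition $b_{\eta} = a_{\iota(\eta)}$ of the new tree as a literal subtree of the old one cannot be carried out. The correct argument (as in \cite[Lemma 5.10]{kaplan2017kim}) instead uses Erd\H{o}s--Rado to extract, for each $n$, a single complete type $p_{n}$ over $M$ realized by $(a_{\eta})_{\eta \in \mathcal{T}_{\kappa}\upharpoonright v}$ for cofinally many $v$ with $|v| = n$, arranges that the $p_{n}$ cohere under restriction of level-sets, and then realizes $\bigcup_{n} p_{n}$ by compactness. This is precisely why the Fact is stated in the ``locally based'' form --- each finite restriction of $(b_{\eta})$ has the same type as \emph{some} finite restriction of $(a_{\eta})$ --- rather than asserting an embedding. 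It also creates an obligation your subtree shortcut avoids: one must check that the realized tree is still spread out and $s$-indiscernible over $M$, which follows because a tree whose finite level-restrictions are $M$-conjugate to finite level-restrictions of a spread-out, $s$-indiscernible tree inherits both properties (for spread-out-ness, because an $\omega$-sequence elementarily equivalent over $M$ to a Morley sequence in a global $M$-invariant type is itself such a Morley sequence, up to conjugation of the type). Your remaining observations --- that restriction to a set of non-limit levels preserves spread-out-ness and $s$-indiscernibility, and that homogeneity of the coloring is exactly the Morley-tree condition --- are correct and are used in the real proof.
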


The interest in tree Morley sequences is that the genericity condition is sufficiently weak that they exist under broader hypotheses than invariant Morley sequences, yet is sufficiently strong to witness Kim-independence.  This is made precise below:

\begin{defn}
Suppose $M$ is a model and $(a_{i})_{i < \omega}$ is an $M$-indiscernible sequence.
\begin{enumerate}
\item Say $(a_{i})_{i < \omega}$ is a \emph{witness} for Kim-dividing over $M$ if, for all formulas $\varphi(x;a_{0})$ that Kim-divide over $M$, $\{\varphi(x;a_{i}) : i <\omega\}$ is inconsistent.
\item Say $(a_{i})_{i < \omega}$ is a \emph{strong witness} to Kim-dividing over $M$ if, for all $n$, the sequence $\langle (a_{n \cdot i}, a_{n \cdot i + 1}, \ldots, a_{n \cdot i + n-1}) : i < \omega \rangle$ is a witness to Kim-dividing over $M$.  
\end{enumerate}
\end{defn}

\begin{fact} \label{witnessfacts}   \cite[Proposition 7.9]{kaplan2017kim}
Suppose $T$ is NSOP$_{1}$ and $M \models T$.  
\begin{enumerate}
\item (Kim's Lemma for tree Morley sequences) $\varphi(x;a)$ Kim-divides over $M$ if and only if $\{\varphi(x;a_{i}) : i < \omega\}$ is inconsistent for some tree Morley sequence $(a_{i})_{i < \omega}$ over $M$ with $a_{0} = a$ if and only if $\{\varphi(x;a_{i}) : i < \omega\}$ is inconsistent for all tree Morley sequences $(a_{i})_{i < \omega}$ over $M$ with $a_{0} = a$.  \cite[Corollary 5.14]{kaplan2017kim}
\item The sequence $(a_{i})_{i < \omega}$ is a strong witness for Kim-dividing over $M$ if and only if $(a_{i})_{i < \omega}$ is a tree Morley sequence over $M$.  \cite[Proposition 7.9]{kaplan2017kim}
\end{enumerate}
\end{fact}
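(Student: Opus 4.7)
The plan is to handle the two parts in tandem, since they both concern how the tree-structured genericity of tree Morley sequences matches the invariant-type genericity underlying Kim-dividing. Existence of a tree Morley sequence starting with any prescribed $a$ is guaranteed by Fact \ref{modeling}: one inductively builds a spread-out $s$-indiscernible tree of sufficient height (extending invariantly at each level), then applies the modeling property of $\mathcal{T}_{\omega}$-indexed indiscernibles to collapse to a Morley tree of height $\omega$ whose zeros-path starts with $a$. Given existence, the implication ``some tree Morley sequence witnesses inconsistency $\Rightarrow$ $\varphi(x;a)$ Kim-divides'' will follow because the immediate successors of a suitable internal node in the underlying Morley tree form an $M$-invariant Morley sequence whose initial term is $M$-conjugate to $a$ by $s$-indiscernibility, and the indiscernibility of the tree Morley sequence transfers the inconsistency to this invariant Morley sequence, exhibiting Kim-dividing directly.

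For the reverse and key direction in (1), suppose $\varphi(x;a)$ Kim-divides over $M$ and let $(a_{i})_{i<\omega}=(a_{\zeta_{\beta}})_{\beta\in\omega\setminus\text{lim}(\omega)}$ be a tree Morley sequence coming from a Morley tree $(a_{\eta})_{\eta\in\mathcal{T}_{\omega}}$. The spread-out condition provides, for each appropriate $\eta$, a global $M$-invariant type $q_{\eta}\supseteq\text{tp}(a_{\unrhd\eta\frown\langle 0\rangle}/M)$ such that $(a_{\unrhd\eta\frown\langle i\rangle})_{i<\omega}$ is a Morley sequence in $q_{\eta}$ over $M$. Choosing $\eta$ so that some $\zeta_{\beta}\frown\langle 0\rangle$ lies in the first subtree, we obtain an $M$-invariant Morley sequence of sibling subtrees whose roots share $\text{tp}(a/M)$ by $s$-indiscernibility of the tree. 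Kim's lemma (Fact \ref{basic kimindep facts}(2)) applied to this sequence yields inconsistency of $\varphi$ along the sibling row; a further appeal to $s$-indiscernibility realigns this sibling row to the zeros-path, giving inconsistency of $\{\varphi(x;a_{i}):i<\omega\}$.

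For part (2), the backward direction (tree Morley $\Rightarrow$ strong witness) reduces to part (1) after observing that grouping a tree Morley sequence into $n$-blocks again produces a tree Morley sequence: starting from the Morley tree, one passes to a thinner cofinal set of levels (bundling $n$ consecutive levels into one) and checks that spread-out and $s$-indiscernibility are inherited by the bundled tree. Applied to the zeros-path this shows that $\langle(a_{ni},\ldots,a_{ni+n-1}):i<\omega\rangle$ is itself a tree Morley sequence, so (1) ensures it witnesses Kim-dividing. For the forward direction (strong witness $\Rightarrow$ tree Morley), the plan is an inductive construction of a Morley tree whose zeros-path is $Ma_{0}$-conjugate to $(a_{i})_{i<\omega}$. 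The strong witness hypothesis is exactly the data that powers the induction: the indiscernibility of the grouped tuples as a witness to Kim-dividing lets us extract, via finite satisfiability in $M$ and extension to global invariant types (together with Fact \ref{fact:Kim Morley is consistent}), appropriate $M$-invariant types over which to place siblings at each level, while the modeling property (Fact \ref{modeling}) is invoked at the end to produce an honest $s$-indiscernible Morley tree of height $\omega$ from the $\mathcal{T}_{\kappa}$-indexed approximation built in the induction.

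I expect the main obstacle to be the forward direction of (2): turning strong witness data into a full Morley tree requires coordinating the spread-out condition (invariance of the chosen types governing siblings) with $s$-indiscernibility of the entire tree, all while preserving the given sequence on the zeros-path. The inductive step has to extract from the strong witness hypothesis enough $M$-invariance to support spreading out at every node, which seems to require the symmetry and independence-theorem properties of Fact \ref{basic kimindep facts} in an essential way, together with the chain condition of Lemma \ref{chain condition for indk} to keep the construction coherent at successor stages; Fact \ref{modeling} then glues the construction together at $\omega$.
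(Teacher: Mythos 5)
First, a structural remark: the statement you were asked to prove appears in the paper only as a \emph{Fact}, imported verbatim from \cite{kaplan2017kim} (Corollary 5.14 and Proposition 7.9 there). The present paper contains no proof of it, so the only thing to assess is whether your blind reconstruction would actually work.

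It would not, as written: there is a genuine gap at the central step of part (1), namely the direction ``$\varphi(x;a)$ Kim-divides over $M$ $\implies$ $\{\varphi(x;a_{i}) : i<\omega\}$ is inconsistent for every tree Morley sequence.'' You apply Kim's lemma along the invariant Morley sequence of sibling subtrees to get inconsistency of $\varphi$ along a sibling row, and then assert that ``a further appeal to $s$-indiscernibility realigns this sibling row to the zeros-path.'' That realignment is not available. A row of siblings is a set of pairwise $\perp$-incomparable nodes on a single level with constant pairwise meet, while the path $(a_{\zeta_{\beta}})_{\beta}$ is a $\unlhd$-chain meeting each level once; these index tuples have different quantifier-free $L_{s,\omega}$-types, and $s$-indiscernibility only identifies tuples of nodes with the \emph{same} quantifier-free type. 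The Morley-tree condition does not rescue this either: it compares $(a_{\eta})_{\eta\in\mathcal{T}_{\omega}\upharpoonright w}$ with $(a_{\eta})_{\eta\in\mathcal{T}_{\omega}\upharpoonright v}$ only when $|w|=|v|$, whereas an $n$-element segment of the path lives in a restriction to $n$ levels and an $n$-element row lives in a restriction to a single level. Transferring inconsistency between rows and paths is exactly the hard content of the result\textemdash the ``path collapse'' lemma of \cite{kaplan2017kim}\textemdash and its proof is a nontrivial induction that invokes NSOP$_{1}$ again (via the array criterion of Fact \ref{karyversion}); in your sketch the transfer is claimed for free, which should itself be a warning sign. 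The same objection applies to your argument for ``inconsistent along some tree Morley sequence $\implies$ Kim-divides,'' although that direction has an easy correct proof you could substitute: a tree Morley sequence is in particular $\ind^{K}$-Morley, so if $\varphi(x;a)$ did not Kim-divide over $M$ then $\{\varphi(x;a_{i}) : i<\omega\}$ would be consistent by Fact \ref{fact:Kim Morley is consistent}(2). Your outline of part (2) has roughly the right shape (blocking the tree for one direction, an inductive construction of a spread-out $s$-indiscernible tree followed by Fact \ref{modeling} for the other), but it reduces to part (1) and leaves the amalgamation steps unspecified, so it inherits the gap above.
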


\begin{rem} \label{coheirs are strong witnesses}
The argument for \cite[Corollary 7.10]{kaplan2017kim} contains a proof that $\ind^{f}$-Morley sequences over models are strong witnesses to Kim-dividing.  Note that it follows, then, that coheir sequences over models are also strong witnesses to Kim-dividing, as $a \ind^{u}_{M} b$ implies $a \ind^{f}_{M} b$ for all $M \models T$.  
\end{rem}

Finally, we define one more kind of $\ind^{K}$-Morley sequence:

\begin{defn}
Suppose $M \models T$.  A \emph{total }$\ind^{K}$\emph{-Morley sequence over}$M$ is an $M$-indiscernible sequence $\langle a_{i} : i < \omega \rangle$ such that $a_{>i} \ind^{K}_{M} a_{\leq i}$ for all $i < \omega$.  
\end{defn}

\begin{fact} \label{sequence implications}
Suppose $T$ is NSOP$_{1}$ and $M \models T$.  
\begin{enumerate}
\item If $I$ is a tree Morley sequence over $M$, then $I$ is a total $\ind^{K}$-Morley sequence over $M$. 
\item If $I$ is a total $\ind^{K}$-Morley sequence over $M$, then $I$ is $\ind^{K}$-Morley over $M$.  
\end{enumerate}
\end{fact}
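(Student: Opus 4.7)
Part (2) is immediate from left-monotonicity of $\ind^{K}$: if $I = \langle a_{i} : i < \omega \rangle$ is a total $\ind^{K}$-Morley sequence over $M$, then by definition $a_{>i-1} \ind^{K}_{M} a_{\leq i-1}$ for each $i \geq 1$, and restricting the left-hand side from $a_{>i-1}$ to the single element $a_{i}$ yields $a_{i} \ind^{K}_{M} a_{<i}$. Combined with the $M$-indiscernibility built into the definition, $I$ is $\ind^{K}$-Morley over $M$.

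For part (1), let $I = \langle a_{i} : i < \omega \rangle$ be a tree Morley sequence over $M$; by Fact \ref{witnessfacts}(2), $I$ is a strong witness to Kim-dividing over $M$. Fix $i < \omega$; we must show $a_{>i} \ind^{K}_{M} a_{\leq i}$. By symmetry of Kim-independence over models (Fact \ref{basic kimindep facts}(3)), finite character, and the equivalence of Kim-forking and Kim-dividing for types over models in NSOP$_{1}$ theories (a standard consequence of Kim's lemma and extension), it suffices to show that for every $k \geq 1$, no formula $\varphi(x; a_{i+1}, \ldots, a_{i+k})$ in $\text{tp}(a_{\leq i}/M(a_{i+1}, \ldots, a_{i+k}))$ Kim-divides over $M$.

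Suppose for contradiction that such a Kim-dividing formula exists, with $\models \varphi(a_{\leq i}; a_{i+1}, \ldots, a_{i+k})$. Set $n = i+k+1$ and consider the sequence of $n$-blocks $c_{j} = (a_{nj}, a_{nj+1}, \ldots, a_{nj+n-1})$; by the strong witness property, $\langle c_{j} : j < \omega \rangle$ is a witness to Kim-dividing over $M$. Apply this to the formula $\theta(x; c) := \varphi(x; \pi(c))$, where $\pi$ projects an $n$-tuple onto its coordinates in positions $i+1, \ldots, i+k$: since $\theta(x; c_{0}) = \varphi(x; a_{i+1}, \ldots, a_{i+k})$ Kim-divides over $M$, the set $\{\varphi(x; a_{nj+i+1}, \ldots, a_{nj+i+k}) : j < \omega\}$ is inconsistent. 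On the other hand, for every $j \geq 0$ the index tuple $(0, 1, \ldots, i, nj+i+1, \ldots, nj+i+k)$ is strictly increasing, so $M$-indiscernibility of $I$ gives $(a_{0}, \ldots, a_{i}, a_{nj+i+1}, \ldots, a_{nj+i+k}) \equiv_{M} (a_{0}, \ldots, a_{i}, a_{i+1}, \ldots, a_{i+k})$; in particular $\models \varphi(a_{\leq i}; a_{nj+i+1}, \ldots, a_{nj+i+k})$ for every $j$, so $a_{\leq i}$ is a common realizer of the allegedly inconsistent set, contradiction.

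The main obstacle is choosing the block size $n = i+k+1$ correctly: this ensures that the shifted parameter tuples remain in strictly increasing order within $I$, which is precisely what lets $M$-indiscernibility convert the single instance $\varphi(a_{\leq i}; a_{i+1}, \ldots, a_{i+k})$ into a common realizer across all shifts, contradicting the inconsistency coming from the strong-witness property.
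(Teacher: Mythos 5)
Your proof is correct, but it takes a genuinely different route from the paper's. The paper reads the independence directly off the Morley tree $(a_{\eta})_{\eta \in \mathcal{T}_{\omega}}$ underlying $I$: for each $i$, the subtrees $\langle a_{\unrhd \zeta_{i+1} \frown \langle j \rangle} : j < \omega \rangle$ form a Morley sequence in a global $M$-invariant type which is indiscernible over $Ma_{\zeta_{\geq i+1}}$, so Kim's lemma gives $a_{>i} \ind^{K}_{M} a_{\unrhd \zeta_{i+1} \frown \langle 0 \rangle}$ and hence $a_{>i} \ind^{K}_{M} a_{\leq i}$ by monotonicity, with no appeal to symmetry or to blocking. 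You instead discard the tree after invoking Fact \ref{witnessfacts}(2) and run a block-shift argument on the strong witness property: a Kim-dividing formula in $\text{tp}(a_{\leq i}/Ma_{>i})$ would have to be inconsistent along the shifted parameter tuples cut out of the $n$-blocks, while order-indiscernibility makes $a_{\leq i}$ a common realizer; your block size $n = i+k+1$ is exactly what keeps the combined index tuples strictly increasing, and the reduction via symmetry, finite character, and the equality of Kim-forking and Kim-dividing over models is legitimate (the paper uses the same facts elsewhere). The one step you pass over silently is that $\theta(x;c_{0})$, obtained from $\varphi(x;a_{i+1},\ldots,a_{i+k})$ by adding the remaining block coordinates as dummy parameters, still Kim-divides over $M$ \emph{as a formula in the parameter} $c_{0}$\textemdash this is what the witness property of the block sequence actually requires. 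It does hold in any NSOP$_{1}$ theory, by pushing a Morley sequence in a global $M$-invariant extension of $\text{tp}(c_{0}/M)$ forward along the coordinate projection and applying Kim's lemma; routine, but it should be said. As for what each approach buys: the paper's argument is shorter and stays close to the definition of a tree Morley sequence, while yours avoids the tree bookkeeping entirely and in effect proves the slightly more explicit statement that every strong witness to Kim-dividing over $M$ is a total $\ind^{K}$-Morley sequence over $M$.
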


\begin{proof}
(2) is obvious so we prove (1).  Suppose $I = \langle a_{i} : i < \omega \rangle$ is a tree Morley sequence over $M$.  Let $(a_{\eta})_{\eta \in \mathcal{T}_{\omega}}$ be a Morley tree over $M$ with $a_{i} = a_{\zeta_{i}}$ for all $i < \omega$.  Then for all $i < \omega$, we have that $\langle a_{\unrhd \zeta_{i+1} \frown \langle j \rangle} : j < \omega \rangle$ is a Morley sequence over $M$ in a global $M$-invariant type which is $Ma_{\zeta_{\geq i+1}}$-indiscernible.  Therefore $a_{\zeta_{\geq i+1}} \ind^{K}_{M} a_{\unrhd \zeta_{i+1} \frown \langle 0 \rangle}$, which, in particular, implies $a_{> i} \ind^{K}_{M} a_{\leq i}$ for all $i < \omega$.  
\end{proof}

\section{Transitivity holds in NSOP$_{1}$ theories}

In this section, we prove the transitivity of Kim-independence in NSOP$_{1}$ theories.  The argument proceeds via an analysis of situations under which one can obtain sequences that are generic over more than one base simultaneously.  The heart of the argument is Proposition \ref{goodseq}, which proves the existence of a sequence that is a tree Morley sequence over a model and $\ind^{K}$-Morley over an elementary extension.  This, combined with symmetry, gives transitivity as an immediate consequence.  

Producing a sequence which is $\ind^{K}$-Morley over a model and a tree Morley sequence over an elementary extension is less involved.  The following lemma was implicit in \cite[Lemma 3.6]{24-Kaplan2017}:

\begin{lem} \label{kms over M tms over N}
Suppose $T$ is NSOP$_{1}$, $M \prec N \models T$, and $a \ind^{K}_{M} N$.  Then there is a tree Morley sequence $(a_{i})_{i < \omega}$ over $N$ with $a_{0} = a$ such that $a_{i} \ind^{K}_{M} Na_{<i}$ for all $i < \omega$.  In particular, $\langle a_{i} : i < \omega \rangle$ is simultaneously an $\ind^{K}$-Morley sequence over $M$ and a tree Morley sequence over $N$.  
\end{lem}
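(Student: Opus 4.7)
My plan is to construct the sequence as a coheir sequence over $N$ and exploit the fact that $M \prec N$ to automatically upgrade finite satisfiability in $N$ into $M$-invariance, which will deliver the Kim-independence over $M$ for free.

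Concretely, I would first extend $\text{tp}(a/N)$ to a global type $q$ finitely satisfiable in $N$; this is standard via an ultrafilter on tuples from $N$. Since $M \subseteq N$, $q$ is a fortiori $M$-invariant. I then build $(a_i)_{i<\omega}$ as a Morley sequence in $q$ over $N$ starting with $a_0 = a$: inductively pick $a_i \models q|_{Na_{<i}}$, with $a_0 = a$ allowed since $a$ realizes $q|_N = \text{tp}(a/N)$. This gives $a_i \ind^{u}_N a_{<i}$ for every $i$, so the sequence is a coheir sequence over $N$.

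Both desired properties are now automatic. By Remark \ref{coheirs are strong witnesses}, coheir sequences over models are strong witnesses to Kim-dividing, so $(a_i)_{i<\omega}$ is a strong witness over $N$, and by Fact \ref{witnessfacts}(2) this means it is a tree Morley sequence over $N$. For the Kim-independence over $M$: since $q$ is global $M$-invariant, every restriction of $q$ is non-forking over $M$ (an $M$-invariant global type contains no formula that divides over $M$, by the usual Morley-sequence argument). In particular $a_i \models q|_{Na_{<i}}$ yields $a_i \ind^{f}_M Na_{<i}$, and the remark after Fact \ref{basic kimindep facts} gives $a_i \ind^{K}_M Na_{<i}$.

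The ``in particular'' clause is immediate: a tree Morley sequence over $N$ is $N$-indiscernible and hence $M$-indiscernible, and $a_i \ind^{K}_M Na_{<i}$ weakens to $a_i \ind^{K}_M a_{<i}$, so $(a_i)_{i<\omega}$ is $\ind^{K}$-Morley over $M$. There is essentially no technical obstacle here; the only substantive point is the observation that $M \prec N$ lets a single global type simultaneously serve as an $N$-finitely-satisfiable extension (delivering the tree Morley structure over $N$) and as an $M$-invariant extension (delivering Kim-independence over $M$).
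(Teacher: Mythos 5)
There is a genuine gap, and it sits exactly where the content of the lemma lives. You assert that since $q$ is finitely satisfiable in $N$ it is $N$-invariant, and that ``since $M \subseteq N$, $q$ is a fortiori $M$-invariant.'' This has the monotonicity of invariance backwards: an $A$-invariant global type is $B$-invariant for every $B \supseteq A$, not for subsets of $A$. A global type invariant over $N$ may contain $\varphi(x;b)$ and $\neg\varphi(x;b')$ for tuples with $b \equiv_{M} b'$ but $b \not\equiv_{N} b'$, so it need not be $M$-invariant, and the deduction $a_{i} \ind^{f}_{M} Na_{<i}$ collapses. A telling symptom is that your argument never uses the hypothesis $a \ind^{K}_{M} N$; but that hypothesis is indispensable, since it is literally the $i = 0$ instance of the conclusion $a_{i} \ind^{K}_{M} Na_{<i}$ and fails for general $a$. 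Nor can the argument be repaired by choosing $q$ more carefully: $a \ind^{K}_{M} N$ does not provide a global extension of $\mathrm{tp}(a/N)$ that is simultaneously finitely satisfiable in $N$ and $M$-invariant.

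The paper's proof begins the same way---with a coheir sequence over $N$ in $\mathrm{tp}(a/N)$---but indexes it by $\mathbb{Z}$ and then \emph{reverses} it, setting $b_{i} = a_{-i}$. Reversal preserves being a strong witness to Kim-dividing, so $\langle b_{i} : i < \omega \rangle$ is still a tree Morley sequence over $N$; the gain is that now the \emph{earlier} elements satisfy $b_{<i} \ind^{u}_{N} b_{i}$. If some $\varphi(x,n;b_{i}) \in \mathrm{tp}(b_{<i}N/Mb_{i})$ witnessed $Nb_{<i} \nind^{K}_{M} b_{i}$, finite satisfiability in $N$ would produce $n' \in N$ realizing it, giving $N \nind^{K}_{M} b_{i}$ and hence, by symmetry and invariance (as $b_{i} \equiv_{N} a$), a contradiction with $a \ind^{K}_{M} N$. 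That is precisely where the hypothesis enters, and it is the step your proposal is missing.
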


\begin{proof}
Let $\langle a_{i} : i \in \mathbb{Z}\rangle$ be a coheir sequence over $N$ with $a_{0} = a$.  Since coheir sequences are strong witnesses to Kim-dividing, by Remark \ref{coheirs are strong witnesses}, reversing the order of a sequence does not change whether or not it is a strong witness, and the fact that strong witnesses are tree Morley by Fact \ref{witnessfacts}(2), it follows that, setting $b_{i} = a_{-i}$, we have that $\langle b_{i} : i < \omega \rangle$ is a tree Morley sequence over $N$ with $b_{0} = a$.  

We claim this sequence also satisfies $b_{i} \ind^{K}_{M} Nb_{<i}$:  if not, then by symmetry, there is some $i$ such that $Nb_{<i} \nind^{K}_{M} b_{i}$ and this is witnessed by some $\varphi(x,n;b_{i}) \in \text{tp}(b_{<i}N/Mb_{i})$.  Because $\langle a_{i} : i \in \mathbb{Z} \rangle$ is a coheir sequence over $N$, we have, in particular, that $a_{>i} \ind^{u}_{N} a_{i}$.  Hence $b_{<i} \ind^{u}_{N} b_{i}$ so there must be some $n' \in N$ with $\models \varphi(n',n;b_{i})$.  But then $N \nind^{K}_{M} b_{i}$.  By symmetry and invariance, this contradicts $a \ind^{K}_{M} N$, since $b_{i} \equiv_{N} a$.  
\end{proof}

\begin{lem} \label{weirdextension}
Supose $T$ is NSOP$_{1}$ and $M \prec N \models T$.  If $b \ind^{K}_{M} N$ and $c \ind^{K}_{M} N$, then there is $c' \equiv_{N} c$ such that $bc' \ind^{K}_{M} N$ and $b \ind^{K}_{N} c'$.  
\end{lem}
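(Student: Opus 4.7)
The plan is to first secure the condition $bc' \ind^{K}_{M} N$ using the strengthened independence theorem, and then verify $b \ind^{K}_{N} c'$ using a tree Morley sequence from Lemma \ref{kms over M tms over N} together with an $Nc'$-indiscernible extraction. Begin by applying Lemma \ref{kms over M tms over N} to $b \ind^{K}_{M} N$, producing a tree Morley sequence $\langle b_{i} : i < \omega \rangle$ over $N$ with $b_{0} = b$ and $b_{i} \ind^{K}_{M} Nb_{<i}$; by monotonicity together with Fact \ref{sequence implications}, this sequence is simultaneously $\ind^{K}$-Morley over $M$ and over $N$. Next, apply extension (Fact \ref{fact:Kim Morley is consistent}(1)) to $c \ind^{K}_{M} \emptyset$ to obtain $\tilde{c} \equiv_{M} c$ with $\tilde{c} \ind^{K}_{M} b$. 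Since $c \ind^{K}_{M} N$, $\tilde{c} \ind^{K}_{M} b$, $c \equiv_{M} \tilde{c}$, and $N \ind^{K}_{M} b$ (the last by symmetry), the strengthened independence theorem (Fact \ref{fact:Kim Morley is consistent}(3)) yields some $c^{*} \equiv_{N} c$ satisfying $bc^{*} \ind^{K}_{M} N$ and $c^{*} \ind^{K}_{M} Nb$. A further application of extension to $c^{*} \ind^{K}_{M} Nb$ produces $c' \equiv_{Nb} c^{*}$ with $c' \ind^{K}_{M} N\langle b_{i}\rangle$, so that $c' \equiv_{N} c$ and $bc' \ind^{K}_{M} N$ are inherited, and in addition $c' \ind^{K}_{M} Nb_{i}$ holds for each $i$.

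To verify $b \ind^{K}_{N} c'$, use the modeling property (Fact \ref{modeling}(1)) to extract an $Nc'$-indiscernible sequence $\langle b'_{i}\rangle$ locally based on $\langle b_{i}\rangle$ over $Nc'$. Preservation of the EM-type over $N$ ensures that $\langle b'_{i}\rangle$ is still tree Morley over $N$ (via the strong-witness characterization in Fact \ref{witnessfacts}(2)), and finite character together with EM-preservation gives $c' \ind^{K}_{M} N\langle b'_{i}\rangle$. Since $b'_{0} \equiv_{N} b$, apply an $N$-automorphism $\sigma$ sending $b'_{0} \mapsto b$, setting $c'' := \sigma(c')$ and $b''_{i} := \sigma(b'_{i})$; then $\langle b''_{i}\rangle$ is a tree Morley sequence over $N$ with $b''_{0} = b$, is $Nc''$-indiscernible, and $c'' \equiv_{N} c$. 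For each $\psi(y;n,b) \in \text{tp}(c''/Nb)$, the $Nc''$-indiscernibility yields $\models \psi(c'';n,b''_{i})$ for every $i$, so by Kim's lemma for tree Morley sequences (Fact \ref{witnessfacts}(1)), $\psi$ does not Kim-divide over $N$; thus $c'' \ind^{K}_{N} b$, and $b \ind^{K}_{N} c''$ follows by symmetry.

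The principal technical obstacle is ensuring that the condition $bc' \ind^{K}_{M} N$ survives the automorphism $\sigma$: since $\sigma$ fixes $N$ but generally does not fix $b$, one only obtains $\sigma(b)c'' \ind^{K}_{M} N$ rather than $bc'' \ind^{K}_{M} N$. I expect this is resolved either by arranging the extraction so that the first term may be taken equal to $b$ directly---via a compactness argument exploiting the $N$-indiscernibility of $\langle b_{i}\rangle$ together with $c' \ind^{K}_{M} Nb_{i}$ for each $i$---or by a final application of the strengthened independence theorem to the constructed $c''$, using $c \ind^{K}_{M} N$ together with the newly secured $b \ind^{K}_{N} c''$ to patch in $bc'' \ind^{K}_{M} N$ while preserving $c'' \equiv_{N} c$.
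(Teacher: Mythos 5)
Your construction of $c^{*}$ with $c^{*}\equiv_{N}c$, $bc^{*}\ind^{K}_{M}N$ and $c^{*}\ind^{K}_{M}Nb$ is correct, and you have correctly identified the mechanism for the second conclusion (Kim's lemma along a tree Morley sequence over $N$). But the gap you flag at the end is genuine and neither of your proposed repairs closes it. Both target conditions, $bc'\ind^{K}_{M}N$ and $b\ind^{K}_{N}c'$, are (by symmetry) constraints on the \emph{single} type $\mathrm{tp}(c'/Nb)$: the first says no formula $\varphi(x,y;n)$ with $\models\varphi(c',b;n)$ Kim-divides over $M$, the second that no formula $\psi(x;b,n)$ with $\models\psi(c';b,n)$ Kim-divides over $N$. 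Your fix (ii) cannot work as stated because the (strengthened) independence theorem amalgamates types over two Kim-independent parameter sets, whereas here both sets of conditions live over the same set $Nb$; there is nothing to amalgamate across. Your fix (i) would require showing that $\mathrm{tp}(c/N)(x)$ together with the no-Kim-dividing-over-$M$ conditions \emph{relative to every $b_{i}$ simultaneously} is consistent, so that a single realization witnesses consistency of $\{\psi(x;b_{i},n):i<\omega\}$ for each relevant $\psi$. The facts you actually secured, namely $c'\ind^{K}_{M}Nb_{i}$ for each $i$, are of the wrong shape for this: $c'\ind^{K}_{M}Nb_{i}$ puts $b_{i}$ on the parameter side, while what is needed is $b_{i}c'\ind^{K}_{M}N$ for all $i$ at once, and your single application of the strengthened independence theorem only achieves this for $i=0$.

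The paper's proof supplies exactly the missing ingredient. It defines $\Gamma(x;N,b)=\mathrm{tp}(c/N)\cup\{\neg\varphi(x,b;n):\varphi(x,y;n)\text{ Kim-divides over }M\}$ and proves (Claim 1) that $\bigcup_{i<\omega}\Gamma(x;N,b_{i})$ is consistent by an \emph{induction along the sequence}, applying the strengthened independence theorem at every step to produce $c_{n+1}$ with $c_{n+1}\ind^{K}_{M}Nb_{\leq n}$ and $b_{n}c_{n+1}\ind^{K}_{M}N$ while preserving $\bigcup_{i<n}\Gamma(x;N,b_{i})$. It then never needs a single $c'$ making the sequence $Nc'$-indiscernible: if the full target type $\Delta(x;N,b)=\Gamma(x;N,b)\cup\{\neg\psi(x;b):\psi\text{ Kim-divides over }N\}$ were inconsistent, then $\Gamma(x;N,b)\vdash\psi(x;b)$ for some $\psi$ Kim-dividing over $N$, hence $\bigcup_{i}\Gamma(x;N,b_{i})\vdash\{\psi(x;b_{i}):i<\omega\}$, and the right-hand side is inconsistent by Kim's lemma for the tree Morley sequence $\langle b_{i}\rangle$ over $N$ while the left-hand side is consistent by Claim 1. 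This entailment-plus-Kim's-lemma compactness step is what replaces your indiscernible extraction and avoids the automorphism that destroys $bc'\ind^{K}_{M}N$. To complete your argument you would need to reproduce essentially this iterated amalgamation.
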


\begin{proof}
Define a partial type $\Gamma(x;N,b)$ by 
$$
\Gamma(x;N,b) = \text{tp}(c/N) \cup \{\neg \varphi(x,b;n) : \varphi(x,y;n) \text{ Kim-divides over }M\}.
$$
By Lemma \ref{kms over M tms over N}, we may construct an $N$-indiscernible sequence $\langle b_{i} : i < \omega \rangle$ such that $b_{0} = b$, $b_{i+1} \ind^{K}_{M} Nb_{\leq i}$, and $\langle b_{i} : i < \omega \rangle$ is a tree Morley sequence over $N$.

\textbf{Claim 1}:  $\bigcup_{i < \omega} \Gamma(x;N,b_{i})$ is consistent. 

\emph{Proof of claim}:  By induction on $n$, we will choose $c_{n} \ind^{K}_{M} Nb_{< n}$ such that 
$$
c_{n} \models \bigcup_{i < n} \Gamma(x;N,b_{i}).
$$
For $n = 0$, we may set $c_{0} = c$ and the condition is satisfied since $c \ind^{K}_{M} N$.  

Suppose we are given $c_{n} \ind^{K}_{M} Nb_{< n}$ realizing $\bigcup_{i < n} \Gamma(x;N,b_{i})$.  By extension, choose $c' \equiv_{M} c$ with $c' \ind^{K}_{M} b_{n}$.  As $b_{n} \ind^{K}_{M} Nb_{< n}$, we may apply the strengthened independence theorem, Fact \ref{fact:Kim Morley is consistent}(3), to find $c_{n+1} \models \text{tp}(c_{n}/Nb_{< n}) \cup \text{tp}(c'/Mb_{n})$ with $c_{n+1} \ind^{K}_{M} Nb_{< n+1}$ and $b_{n}c_{n+1} \ind^{K}_{M} Nb_{< n}$.  In particular, $b_{n}c_{n+1} \ind^{K}_{M} N$, so $c_{n+1} \models \Gamma(x;N,b_{n})$.  This gives $c_{n+1} \models \bigcup_{i < n+1} \Gamma(x;N,b_{i})$.  The claim follows by compactness.\qed

Now define a partial type $\Delta(x;N,b)$ by 
\begin{eqnarray*}
\Delta(x;N,b) &=& \Gamma(x;N,b)\\
& \cup& \{ \neg \psi(x;b): \psi(x;b) \in L(Nb)\text{ Kim-divides over }N\}.
\end{eqnarray*}
\textbf{Claim 2}:  $\Delta(x;a,b)$ is consistent.  

\emph{Proof of claim}:  Suppose not.  Then, by definition of $\Delta(x;N,b)$, compactness, and the equality of Kim-forking and Kim-dividing, we have
$$
\Gamma(x;N,b) \vdash \psi(x;b),
$$
for some $\psi(x;b) \in L(Nb)$ that Kim-divides over $N$.  Then we have 
$$
\bigcup_{i < \omega} \Gamma(x;N,b_{i}) \vdash \{\psi(x;b_{i}) : i < \omega\}.
$$
The left-hand side is consistent by Claim 1 but the right hand side is inconsistent by Kim's lemma and the choice of $\overline{b}$, a contradiction that proves the claim.
\qed

Now let $c' \models \Delta(x;N,b)$.  Then, by symmetry, we have $c' \equiv_{N} c$, $bc' \ind^{K}_{M} N$, and $b \ind^{K}_{N} c'$ which is what we want.   
\end{proof}

\begin{prop}\label{goodseq}
Suppose $T$ is NSOP$_{1}$ and $M \prec N \models T$.  If $a \ind^{K}_{M} N$, then there is a sequence $\langle a_{i} : i < \omega \rangle$ with $a_{0} = a$ which is a tree Morley sequence over $M$ and Kim-Morley over $N$. 
\end{prop}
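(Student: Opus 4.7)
The plan is to build a Morley tree $(a_{\eta})_{\eta \in \mathcal{T}_{\omega}}$ over $M$ whose all-zeros path is $\ind^{K}$-Morley over $N$ and begins with $a$; its all-zeros path will then be the required tree Morley sequence over $M$ that is additionally $\ind^{K}$-Morley over $N$. The construction goes by induction on $n < \omega$, producing approximating trees $(a^{n}_{\eta})_{\eta \in \mathcal{T}_{n}}$ spread out and $s$-indiscernible over $M$, with the \emph{full} all-zeros path $\langle a^{n}_{\zeta_{\beta}} : 0 \leq \beta \leq n \rangle$ of $\mathcal{T}_{n}$ (including the root $\zeta_{n} = \emptyset$) being an $\ind^{K}$-Morley sequence of length $n+1$ over $N$ starting with $a$. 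Note that this inductive hypothesis is tight: when passing from $\mathcal{T}_{n}$ to $\mathcal{T}_{n+1}$ via $\iota_{n,n+1}$, the old root is identified with $\zeta_{n}$ in $\mathcal{T}_{n+1}$, so it is precisely the old root which enters the tree Morley sequence at the next stage.

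The base case sets $a^{0}_{\emptyset} = a$. For the inductive step I view $\mathcal{T}_{n+1}$ as a new root above $\omega$ copies of $\mathcal{T}_{n}$, keeping the previous tree as the $i=0$ subtree. The work is to choose the new root $a^{n+1}_{\emptyset}$ so that the augmented all-zeros path $(a^{n}_{\zeta_{0}}, \ldots, a^{n}_{\zeta_{n}}, a^{n+1}_{\emptyset})$ is $\ind^{K}$-Morley over $N$, while simultaneously producing the remaining $\omega$ subtree copies as an $M$-invariant Morley sequence (this secures spread-outness over $M$ at the new root). After first finding a candidate for $a^{n+1}_{\emptyset}$ realizing the appropriate $N$-type by standard extension of the existing $\ind^{K}$-Morley sequence over $N$, I invoke the strengthened independence theorem (Fact \ref{fact:Kim Morley is consistent}(3)) together with Lemma \ref{weirdextension} to amalgamate this $N$-datum with an $M$-invariant Morley sequence of copies, obtaining a final root that is Kim-independent over $N$ of the $i=0$ subtree and Kim-independent over $M$ of the full Morley sequence of copies. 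A Ramsey argument then delivers $s$-indiscernibility over $M$ while preserving $N$-indiscernibility on the all-zeros path.

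At the end I apply the modeling property: using Fact \ref{modeling}(1) with parameter set $MN$ first extracts $s$-indiscernibles that preserve the $N$-type of the all-zeros path (hence $N$-indiscernibility and $\ind^{K}_{N}$), and then Fact \ref{modeling}(2) produces a Morley tree $(a_{\eta})_{\eta \in \mathcal{T}_{\omega}}$ over $M$ whose all-zeros path is as desired. The main obstacle throughout is the inductive step: spread-outness over $M$ demands $M$-invariant types for the children, while $\ind^{K}$-Morley over the larger base $N$ constrains the new root in a potentially incompatible way (since $N$-invariance of a global type is weaker than $M$-invariance, one cannot in general transfer the tree-Morley-over-$N$ output of Lemma \ref{kms over M tms over N} to tree-Morley-over-$M$). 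Lemma \ref{weirdextension} is precisely what reconciles Kim-independence over these two distinct bases simultaneously.
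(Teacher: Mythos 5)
Your overall strategy is the paper's: build approximating trees that are spread out over $M$ while forcing the new root at each step to be suitably Kim-independent over $N$, with Lemma \ref{weirdextension} as the tool that reconciles the two bases, and then extract a Morley tree. However, there are concrete gaps in the execution. The most serious is the length of the induction: you stop at $n<\omega$, obtaining at best a tree indexed by $\mathcal{T}_{\omega}$, and then invoke Fact \ref{modeling}(2). But Fact \ref{modeling}(2) takes as \emph{input} a spread-out, $s$-indiscernible tree indexed by $\mathcal{T}_{\kappa}$ for $\kappa$ sufficiently large (it is an Erd\H{o}s--Rado argument on the set of levels), and only then outputs a Morley tree indexed by $\mathcal{T}_{\omega}$. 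A tree of height $\omega$ that is merely spread out and $s$-indiscernible need not satisfy the homogeneity condition on restrictions to finite sets of levels, so its all-zeros path need not be tree Morley over $M$. The induction has to run through all ordinals $\alpha\leq\kappa$, which forces you to handle limit stages and to state the inductive invariants so that they pass to direct limits (finite character); your sketch does neither.

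Two further points. First, your inductive invariant is too weak to keep the induction going: the paper maintains that the \emph{entire} tree satisfies $(a^{\alpha}_{\eta})_{\eta}\ind^{K}_{M}N$, and this is exactly what lets one apply the chain condition (Lemma \ref{chain condition for indk}) to make the coheir sequence of tree-copies $N$-indiscernible with $\overline{b}\ind^{K}_{M}N$, which is the hypothesis of Lemma \ref{weirdextension}; the output $b\overline{b}\ind^{K}_{M}N$ then restores the invariant for the next stage. Your condition ``root Kim-independent over $M$ of the full Morley sequence of copies'' is a different statement and does not feed back into the next application of Lemma \ref{weirdextension}. Second, the $N$-data must survive the locally-based extraction at each stage, so the trees should be kept $s$-indiscernible over $N$ throughout (with the root independent over $N$ from the whole previous tree, not just from the all-zeros path, since the extraction recombines nodes according to quantifier-free tree types); extracting $s$-indiscernibles over $M$ and separately asking to ``preserve $N$-indiscernibility on the all-zeros path'' is not something the modeling property provides. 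These are fixable by adopting the paper's invariants, but as written the proposal does not go through.
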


\begin{proof}
By induction on $\alpha$, we will construct trees $(a^{\alpha}_{\eta})_{\eta \in \mathcal{T}_{\alpha}}$ satisfying the following conditions:
\begin{enumerate}
\item For all $\eta \in \mathcal{T}_{\alpha}$, $a^{\alpha}_{\eta} \models \text{tp}(a/N)$.  
\item $(a^{\alpha}_{\eta})_{\eta \in \mathcal{T}_{\alpha}}$ is $s$-indiscernible over $N$, spread out over $M$.
\item If $\alpha$ is a successor, then $a^{\alpha}_{\emptyset} \ind^{K}_{N} (a^{\alpha}_{\eta})_{\eta \in \mathcal{T}_{\alpha}}$.
\item $(a^{\alpha}_{\eta})_{\eta \in \mathcal{T}_{\alpha}} \ind^{K}_{M} N$.  
\item If $\alpha < \beta$, then for all $\eta \in \mathcal{T}_{\alpha}$, $a^{\beta}_{\iota_{\alpha \beta}(\eta)} = a^{\alpha}_{\eta}$.  
\end{enumerate}
To begin, put $a^{0}_{\emptyset} = a$.  At a limit stage $\delta$, we define $(a^{\delta}_{\eta})_{\eta \in \mathcal{T}_{\delta}}$ by $a^{\delta}_{\iota_{\alpha \delta}(\eta)} = a^{\alpha}_{\eta}$ for all $\alpha < \delta$ and $\eta \in \mathcal{T}_{\alpha}$.  This is well-defined by (5) and the definition of $\mathcal{T}_{\delta}$.  Moreover, it clearly satisfies (1), (2) is trivial, and (3) and (4) are satisfied by finite character.  

Now in the successor stage, we will construct $(a^{\alpha+1}_{\eta})_{\eta \in \mathcal{T}_{\alpha+1}}$.  Let $\overline{b} = \langle (a^{\alpha}_{\eta,i})_{\eta \in \mathcal{T}_{\alpha}} : i < \omega \rangle$ be a coheir sequence over $M$ with $a^{\alpha}_{\eta,0} = a^{\alpha}_{\eta}$ for all $\eta \in \mathcal{T}_{\alpha}$.  By (4), we may assume $\overline{b}$ is $N$-indiscernible and $\overline{b} \ind^{K}_{M} N$ by the chain condition (Lemma \ref{chain condition for indk}).  Apply Lemma \ref{weirdextension} to get $b \equiv_{N} a$ such that $b \ind^{K}_{N} \overline{b}$ and $b\overline{b} \ind^{K}_{M} N$.  Define a tree $(b_{\eta})_{\eta \in \mathcal{T}_{\alpha+1}}$ by $b_{\emptyset} = b$ and $b_{\langle i \rangle \frown \eta} = a^{\alpha}_{\eta,i}$ for all $i < \omega$, $\eta \in \mathcal{T}_{\alpha}$.  Now let $(a^{\alpha+1}_{\eta})_{\eta \in \mathcal{T}_{\alpha+1}}$ be an $s$-indiscernible tree over $N$, locally based on the tree $(b_{\eta})_{\eta \in \mathcal{T}_{\alpha}}$ over $N$.  By an automorphism, we may assume that $a^{\alpha+1}_{\langle 0 \rangle \frown \eta} = a^{\alpha}_{\eta}$ for all $\eta \in \mathcal{T}_{\alpha}$, so (5) is satisfied.  By construction and induction, $(b_{\eta})_{\eta \in \mathcal{T}_{\alpha+1}}$ is spread out over $N$ and $b_{\eta} \models \text{tp}(a/N)$ for all $\eta \in \mathcal{T}_{\alpha+1}$ so $(a^{\alpha+1}_{\eta})_{\eta \in \mathcal{T}_{\alpha+1}}$ satisfies (1) and (2).  Likewise, since $b \ind^{K}_{N} \overline{b}$ and $(b_{\eta})_{\eta \in \mathcal{T}_{\alpha+1}} \ind^{K}_{M} N$ by construction, and because of the fact that Kim-forking is witnessed by formulas, it follows from the fact that $(a^{\alpha+1}_{\eta})_{\eta \in \mathcal{T}_{\alpha+1}}$ is locally based on $(b_{\eta})_{\eta \in \mathcal{T}_{\alpha+1}}$ over $N$ that $a^{\alpha+1}_{\emptyset} \ind^{K}_{N} a^{\alpha+1}_{\vartriangleright \emptyset}$ and $(a^{\alpha+1}_{\eta})_{\eta \in \mathcal{T}_{\alpha+1}} \ind^{K}_{M} N$ as well, so (3) and (4) are satisfied.  This completes the construction.  

Let $(a_{\eta})_{\eta \in \mathcal{T}_{\omega}}$ be the tree obtained by applying Fact \ref{modeling}.  Then $(a_{\zeta_{\alpha}})_{\alpha < \omega}$ is the desired sequence.    
\end{proof}

\begin{thm} \label{transitivity theorem}
Suppose $T$ is NSOP$_{1}$, $M \prec N \models T$, $a \ind^{K}_{M} N$ and $a \ind^{K}_{N}b$.  Then $a \ind^{K}_{M} Nb$.  
\end{thm}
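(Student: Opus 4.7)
The plan is to argue by contradiction, exploiting the sequence produced by Proposition \ref{goodseq} which is generic at two bases at once. Suppose for contradiction that $a \nind^{K}_{M} Nb$. By symmetry of Kim-independence over models (Fact \ref{basic kimindep facts}(3)), this is equivalent to $Nb \nind^{K}_{M} a$, so $\mathrm{tp}(Nb/Ma)$ Kim-forks over $M$. Unraveling the definition of Kim-forking, I obtain a finite tuple $c$ from $Nb$ and a formula $\psi(y;a)$ Kim-dividing over $M$ such that $\models \psi(c;a)$.

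Next, I would apply Proposition \ref{goodseq} to fix a sequence $\langle a_{i} : i < \omega \rangle$ with $a_{0} = a$ that is simultaneously a tree Morley sequence over $M$ and an $\ind^{K}$-Morley sequence over $N$. Since $\psi(y;a_{0})$ Kim-divides over $M$, Kim's Lemma for tree Morley sequences (Fact \ref{witnessfacts}(1)) forces $\{\psi(y;a_{i}) : i < \omega\}$ to be inconsistent. The aim is therefore to show that the tuple $c$ realizes this entire set, contradicting inconsistency.

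The key step, and the only place the hypothesis $a \ind^{K}_{N} b$ is used, is arranging $Nb$-indiscernibility of $\langle a_{i} : i < \omega \rangle$. From $a \ind^{K}_{N} b$ and symmetry I have $b \ind^{K}_{N} a_{0}$, and the chain condition (Lemma \ref{chain condition for indk}) applied to $b$ and the $\ind^{K}$-Morley sequence $\langle a_{i} : i < \omega \rangle$ over $N$ produces $b' \equiv_{Na_{0}} b$ such that $\langle a_{i} : i < \omega \rangle$ is $Nb'$-indiscernible. Fixing an automorphism of $\mathbb{M}$ over $Na$ sending $b' \mapsto b$ and applying it to each $a_{i}$, I may assume $\langle a_{i} : i < \omega \rangle$ itself is $Nb$-indiscernible, with its tree Morley property over $M$ and $\ind^{K}$-Morley property over $N$ preserved because the automorphism fixes $N$. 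Then $Nb$-indiscernibility transports $\models \psi(c;a_{0})$ to $\models \psi(c;a_{i})$ for every $i$, so $c$ realizes $\{\psi(y;a_{i}) : i < \omega\}$, yielding the desired contradiction. With Proposition \ref{goodseq} in hand, the entire argument reduces to a single use of the chain condition together with Kim's lemma for tree Morley sequences.
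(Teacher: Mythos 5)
Your proposal is correct and follows essentially the same route as the paper: both rest on Proposition \ref{goodseq} to produce a sequence starting with $a$ that is tree Morley over $M$ and $\ind^{K}$-Morley over $N$, then use $a \ind^{K}_{N} b$ to conjugate it to an $Nb$-indiscernible sequence, and conclude via Kim's lemma for tree Morley sequences and symmetry. The only cosmetic differences are that you argue by contradiction rather than directly, and you justify the $Nb$-indiscernibility step by an explicit appeal to the chain condition (Lemma \ref{chain condition for indk}) where the paper cites compactness and Ramsey; both are valid.
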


\begin{proof}
Suppose $a,b,M,$ and $N$ are given as in the statement.  By Proposition \ref{goodseq}, there is a sequence $I = \langle a_{i} : i < \omega \rangle$ with $a_{0} = a$ such that $I$ is a tree Morley sequence over $M$ and an $\ind^{K}$-Morley sequence over $N$.  Since $b \ind^{K}_{N} a$, there is $I' \equiv_{Na} I$ such that $I'$ is $Nb$-indiscernible, by compactness, Ramsey, and Fact \ref{witnessfacts}.  Then $I'$ still a tree Morley sequence over $M$ so, by Kim's lemma for tree Morley sequences, $Nb \ind^{K}_{M} a$, so we may conclude by symmetry.
\end{proof}

Transitivity allows one to easily obtain analogues for Kim-independence of the ``algebraically reasonable" properties of Kim- and algebraic-independence proved in \cite{kruckman2018generic}.  For example, the following is the analogue of ``algebraically reasonable extension" \cite[Theorem 2.15]{kruckman2018generic}:  

\begin{cor}
Suppose $T$ is NSOP$_{1}$, $M \prec N \models T$, and $a \ind^{K}_{M} N$.  Then given any $b$, there is $a'$ with $a' \equiv_{N} a$, $a' \ind^{K}_{M} Nb$, and $a' \ind^{K}_{N} b$.  
\end{cor}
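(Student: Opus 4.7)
The plan is to reduce this corollary to Theorem \ref{transitivity theorem} (transitivity) via two moves: first use extension to produce $a' \equiv_{N} a$ with $a' \ind^{K}_{N} b$, and then transfer the hypothesis $a \ind^{K}_{M} N$ along an automorphism sending $a$ to $a'$ and apply transitivity.

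Concretely, I would first note the trivial independence $a \ind^{K}_{N} \emptyset$, which holds for every $a$: a consistent formula $\varphi(x;n)$ with $n \in N$ cannot Kim-divide over $N$, since the realized global type of $n$ is an $N$-invariant extension of $\text{tp}(n/N)$ whose (constant) Morley sequence $\langle n,n,\ldots\rangle$ trivially witnesses consistency of $\{\varphi(x;n)\}$, and Kim's lemma (Fact \ref{basic kimindep facts}(2)) then rules out Kim-dividing, hence Kim-forking, relative to every $N$-invariant extension. Applying extension (Fact \ref{fact:Kim Morley is consistent}(1)) to this starting independence, with base $N$ and additional parameter $b$, yields $a' \equiv_{N} a$ with $a' \ind^{K}_{N} b$. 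Since $a' \equiv_{N} a$ and $M \subseteq N$, invariance of $\ind^{K}$ under any automorphism fixing $N$ pointwise (and hence fixing $M$) transports the hypothesis $a \ind^{K}_{M} N$ to $a' \ind^{K}_{M} N$. Theorem \ref{transitivity theorem} applied to $a' \ind^{K}_{M} N$ and $a' \ind^{K}_{N} b$ now produces $a' \ind^{K}_{M} Nb$, and this, together with $a' \equiv_{N} a$ and $a' \ind^{K}_{N} b$, delivers all three required conclusions for the same $a'$.

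There is no serious obstacle here: the substantive content sits entirely in Theorem \ref{transitivity theorem}, which we have just proved. The only minor verification is the trivial ``existence'' observation $a \ind^{K}_{N} \emptyset$, which is routine and could equivalently be quoted as full existence for $\ind^{K}$ over models.
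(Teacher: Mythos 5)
Your proposal is correct and follows essentially the same route as the paper: extension over $N$ to get $a' \equiv_{N} a$ with $a' \ind^{K}_{N} b$, then invariance to transfer $a \ind^{K}_{M} N$ to $a'$, then Theorem \ref{transitivity theorem}. The only difference is that you spell out the trivial existence step $a \ind^{K}_{N} \emptyset$ needed to invoke extension, which the paper leaves implicit.
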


\begin{proof}
Applying extension, we obtain $a' \equiv_{N} a$ so that $a' \ind^{K}_{N} b$.  By invariance, $a' \ind^{K}_{M} N$ so by transitivity, $a' \ind^{K}_{M} Nb$.  
\end{proof}


\subsection{An example}

In this subsection, we present an example that illustrates two important phenomena simultaneously.  First, it shows that if $T$ is NSOP$_{1}$, $M \prec N \models T$ and $a \ind^{K}_{M} N$, then it is not necessarily possible to find $I = \langle a_{i} : i < \omega \rangle$ that is a coheir sequence over $N$ with $a_{0} = a$ and $I \ind^{K}_{M} N$.  Our example shows that Lemma \ref{kms over M tms over N}\textemdash this lemma shows that, in this situation, a coheir sequence starting with $a$ over $N$ is $\ind^{K}$-Morley over $M$\textemdash is optimal, as, in general, one cannot improve it to conclude that a coheir sequence over $N$ is a stronger form of Morley sequence over $M$.  In particular, it is not the case that every tree Morley sequence over $N$ is automatically a tree Morley sequence over $M$, as one might hope, as we produce a coheir sequence (which is therefore tree Morley over $N$) which is not tree Morley over $M$.   Secondly, our example shows that it is possible, in an NSOP$_{1}$ theory, that there is an $\ind^{K}$-Morley sequence that is neither a tree Morley sequence nor a total $\ind^{K}$-Morley sequence (later, in Corollary \ref{tms = tree}, we will show the notions of tree Morley sequence and total $\ind^{K}$-Morley sequence are equivalent).  In particular, we show that there is a sequence $\langle a_{i} : i < \omega \rangle$ that is $\ind^{K}$-Morley over $M$ but $a_{2}a_{3} \nind^{K}_{M} a_{0}a_{1}$.  

\begin{fact} \label{winkler facts}
Let $L$ be the language consisting of a single binary function $f$.
\begin{enumerate}
\item The empty $L$-theory has a model completion $T_{f}$, which eliminates quantifiers. \cite{winkler1975model} \cite[Corollary 3.10]{kruckman2018generic}
\item Modulo $T_{f}$, for all sets $A$, $\text{acl}(A) = \text{dcl}(A) = \langle A \rangle$, where $\langle A \rangle$ denotes the substructure generated by $A$.  \cite[Corollary 3.11]{kruckman2018generic}
\item $T_{f}$ is NSOP$_{1}$ and Kim-independence coincides with algebraic independence:  for any tuples $a,b$, if $M \models T_{f}$, $a \ind^{K}_{M} b$ if and only if $\langle aM \rangle \cap \langle bM \rangle = M$. \cite[Corollary 3.13]{kruckman2018generic}
\end{enumerate}
\end{fact}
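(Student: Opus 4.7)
The plan is to prove the three items in sequence, exploiting the fact that $L = \{f\}$ contains only a single binary function symbol. For (1), I would show that the empty $L$-theory has amalgamation and joint embedding, from which existence of a model companion with quantifier elimination follows by standard arguments. Amalgamation over a common substructure $A$ of two embeddings $A \hookrightarrow B$ and $A \hookrightarrow C$ is by free amalgamation: form $B \sqcup_{A} C$ and extend $f$ by introducing a fresh element for each pair whose value is not yet determined, then iterate $\omega$ times to close under $f$. The existentially closed structures are axiomatized by a first-order scheme asserting that every finite extension diagram consistent with $T_{f}$ is already realized. Since the class has amalgamation, the model companion is in fact a model completion and $T_{f}$ eliminates quantifiers.

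For (2), with QE established, $\mathrm{dcl}(A) = \langle A \rangle$ because the only terms of $L$ are iterated applications of $f$, and any element definable over $A$ by a quantifier-free formula with parameters in $A$ must be one of these. To show $\mathrm{acl}(A) \subseteq \langle A \rangle$, given $b \notin \langle A \rangle$, the plan is to apply the free amalgamation of (1) to produce infinitely many pairwise distinct realizations of $\text{tp}(b/A)$: amalgamate $\omega$ disjoint copies of $\langle Ab \rangle$ over $\langle A \rangle$, embed into a model of $T_{f}$, and observe that by QE all the images of $b$ realize $\text{tp}(b/A)$. So $b$ has infinitely many $A$-conjugates and is therefore not algebraic over $A$.

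For (3), the approach is to verify the Kim--Pillay-style criterion from \cite{ArtemNick} for the relation $\ind^{a}$ on models defined by $a \ind^{a}_{M} b \iff \langle aM \rangle \cap \langle bM \rangle = M$; once verified, this criterion simultaneously concludes that $T_{f}$ is NSOP$_{1}$ and that $\ind^{K}$ agrees with $\ind^{a}$ on models. Invariance, monotonicity, symmetry, and strong finite character are immediate from the definition; existence and extension reduce to free amalgamation, available by (1). The expected main obstacle is verifying the (strengthened) independence theorem over models: given $c_{0} \equiv_{M} c_{1}$ with $c_{0} \ind^{a}_{M} a$, $c_{1} \ind^{a}_{M} b$, and $a \ind^{a}_{M} b$, produce $c$ realizing $\text{tp}(c_{0}/Ma) \cup \text{tp}(c_{1}/Mb)$ with $c \ind^{a}_{M} ab$. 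The plan is to identify $c_{0}$ with $c_{1}$ via the $M$-isomorphism $\langle Mc_{0} \rangle \cong \langle Mc_{1} \rangle$ and form the free amalgam of $\langle Mac_{0} \rangle$ and $\langle Mbc_{1} \rangle$ over $\langle Mc_{0} \rangle$, working inside the ambient free amalgam $\langle Mab \rangle$ for the $a,b$ sides. One must then verify that in the resulting structure the three substructures $\langle Ma \rangle$, $\langle Mb \rangle$, $\langle Mc \rangle$ remain pairwise algebraically independent over $M$; this follows because each element of the free amalgam has a canonical representation as a term over a tuple drawn from one of the constituents, so any element lying in two of the pieces must already lie in the common base $M$. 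Embedding into a model of $T_{f}$ then yields the desired $c$, and the Kim--Pillay-style criterion of \cite{ArtemNick} completes the proof.
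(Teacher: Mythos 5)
This item is stated in the paper as a Fact with citations to \cite{winkler1975model} and \cite[Corollaries 3.10, 3.11, 3.13]{kruckman2018generic}; the paper supplies no proof, so your proposal is an attempt to reprove the external results, and it has two genuine gaps. The first is in item (1): amalgamation and joint embedding do \emph{not} yield a model companion ``by standard arguments'' (the class of groups has both and the theory of groups has no model companion). The entire content of Winkler's theorem is the first-order axiomatizability of the class of existentially closed $L$-structures, and this is precisely where the function symbol bites: a single element already generates an infinite substructure, so ``every finite extension diagram consistent with $T_{f}$ is already realized'' is not visibly a first-order scheme. One must isolate a finite presentation of a generated extension (finitely many new generators, finitely many imposed identifications among terms, everything else free) and argue that realizing the finitely many atomic and negated-atomic constraints already forces a copy of the whole infinite free diagram. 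You assert the scheme but never address why it is expressible or why it axiomatizes the existentially closed models; that is the crux of \cite{winkler1975model}.

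The second gap is in item (3). You claim that verifying the criterion of \cite{ArtemNick} for $\ind^{a}$ ``simultaneously concludes that $T_{f}$ is NSOP$_{1}$ and that $\ind^{K}$ agrees with $\ind^{a}$ on models.'' It does not: that criterion concludes only that $T$ is NSOP$_{1}$, and, as this paper itself explains in the discussion preceding Theorem \ref{criterion}, even the refinement in \cite[Theorem 9.1]{24-Kaplan2017} yields only the one inclusion $a \ind^{a}_{M} b \implies a \ind^{K}_{M} b$. The converse\textemdash which is the substantive half of (3)\textemdash requires verifying a witnessing axiom such as axiom (6) of Theorem \ref{criterion}: if $\langle Ma \rangle \cap \langle Mb \rangle \neq M$, say $t(a,m) = s(b,m') = c \notin M$, one must show that a formula of $\text{tp}(a/Mb)$ such as $t(x,m) = s(b,m')$ becomes inconsistent along a suitable Morley sequence in $\text{tp}(b/M)$; this uses item (2) to see that the corresponding elements $s(b_{i},m')$ are pairwise distinct along a non-constant sequence, and it is a genuine step that your plan omits entirely. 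A smaller but real issue in the same item: the join of $\langle Ma \rangle$ and $\langle Mb \rangle$ inside the monster is not free, so you cannot simply take the free amalgam of $\langle Mac_{0} \rangle$ and $\langle Mbc_{1} \rangle$ over $\langle Mc_{0} \rangle$; you must build the amalgam relative to the given (possibly non-free) structure on $\langle Mab \rangle$ and check that no collision forces an element into two of the three pieces, which your appeal to ``canonical term representations'' gestures at but does not establish.
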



\begin{exmp}
Let $M$ be a countable model of $T_{f}$ and $N$ an $\aleph_{1}$-saturated elementary extension, all contained in the monster model $\mathbb{M} \models T_{f}$.  Pick elements $m_{*} \in M$ and $n_{*} \in N \setminus M$.  In $N \setminus M$, we can find a countable set of distinct elements $B = \{b_{i} : i < \omega\}$ such that $f|_{(B \times M) \cup (M \times B) \cup \Delta} = \{m_{*}\}$ and $f|_{(B \times B) \setminus \Delta} = \{n_{*}\}$, where $\Delta = \{(b_{i},b_{i}) : i < \omega\}$.  Let $\mathcal{D}$ be a non-principal ultrafilter on $N$ concentrating on $B$ and $q = \text{Av}(\mathcal{D},\mathbb{M})$.  Let $I = (a_{i})_{i < \omega} \models q^{\otimes \omega}|_{N}$ be a Morley sequence in $q$ over $N$.  

We claim $a_{0} \ind^{K}_{M} N$.  This is equivalent to the assertion that $\langle a_{0}M \rangle \cap N = M$.
Suppose $c \in \langle a_{0}M \rangle \cap N$.  Then there is a term $t(x;m)$, possibly with parameters from $M$, such that $t(a_{0};m) = c$ and therefore $\{i : t(b_{i};m) = c\} \in \mathcal{D}$.  One may easily check that if $s$ is a constant term in the language $L(Mb_{i})$, i.e. $L$ with constants for $M$ and $b_{i}$, then either $s = b_{i}$ or there is $m' \in M$ with $s = m'$.  This is clear for the constants and, since $f(n,b_{i}) = f(b_{i},n) = f(b_{i},b_{i}) = m_{*}$ for all $n \in M$, the induction follows.  Since the $b_{i}$ are pairwise distinct and $\{i : t(b_{i};m) = c\} \in \mathcal{D}$, it is clear that $c$ is not equal to any $b_{i}$, so it follows that $c \in M$.  

However, $f(b_{i},b_{j}) = n_{*} \in N\setminus M$ for all $i,j < \omega$ so $f(a_{0},a_{1}) = n_{*}$.  Therefore $\text{dcl}(a_{0},a_{1}M) \cap N \neq M$, which shows $a_{0}a_{1} \nind^{K}_{M} N$.  This shows in particular $I \nind^{K}_{M} N$.  

Next, in the proof of Lemma \ref{kms over M tms over N}, we show that if $T$ is NSOP$_{1}$, $M \prec N$ and $b \ind^{K}_{M} N$, then for any coheir sequence $\langle b_{i} : i < \omega \rangle$ in $\text{tp}(b/N)$, we have $b_{>i} \ind^{K}_{M} b_{i} $ for all $i < \omega$.  It follows that $a_{>i} \ind^{K}_{M} a_{i}$ and thus $I$ is an $\ind^{K}$-Morley sequence over $M$ indexed in reverse.  However, we have $f(a_{2},a_{3}) = f(a_{0},a_{1}) = n_{*} \not\in M$ so $a_{0}a_{1} \nind^{K}_{M} a_{2}a_{3}$, which shows that $I$ is not a total $\ind^{K}$-Morley sequence.  
\end{exmp}

\section{Transitivity implies NSOP$_{1}$}

In this section, we complete the characterization of NSOP$_{1}$ theories by the transitivity of Kim-independence.  The argument is loosely inspired by the proof due to Kim that transitivity of non-forking independence implies simplicity \cite[Theorem 2.4]{kim2001simplicity}.  However, we have to deal with a more complicated combinatorial configuration as well as the need to produce \emph{models} over which we may observe a failure of transitivity from SOP$_{1}$.  We begin by observing a combinatorial consequence of SOP$_{1}$ arising from the witnessing array of pairs and then work in a Skolemization of a given SOP$_{1}$ theory to find the desired counter-example to transitivity.  

\begin{lem}\label{goodindiscernible}
Suppose $T$ has SOP$_{1}$.  Then there is a formula $\varphi(x;y)$ and an indiscernible sequence $(a_{i},c_{i,0},c_{i,1})_{i < \omega}$ such that 
\begin{enumerate}
\item For all $i < \omega$, $a_{i} \models \{\varphi(x;c_{j,0}) : j \leq i\}$.
\item $\{\varphi(x;c_{i,1}) : i < \omega\}$ is $2$-inconsistent.
\item For all $i < \omega$, $c_{i,0} \equiv_{a_{<i},c_{<i,0}c_{<i,1}} c_{i,1}$.
\end{enumerate}
\end{lem}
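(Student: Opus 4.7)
The plan is to derive the desired indiscernible sequence from an $s$-indiscernible $\omega$-branching tree witness to SOP$_{1}$. From the binary tree $(a'_{\eta})_{\eta \in 2^{<\omega}}$ provided by the definition of SOP$_{1}$---whose binary siblings are already $\varphi$-inconsistent, by applying the SOP$_{1}$ inconsistency condition with $\eta = \nu \frown \langle 0 \rangle$---a standard compactness argument yields an $\omega$-branching tree $(a_{\eta})_{\eta \in \omega^{<\omega}}$ in which each branch is $\varphi$-consistent and, for any $\nu$, $i \neq j$, and $\eta \unrhd \nu \frown \langle i \rangle$, the pair $\{\varphi(x;a_\eta), \varphi(x;a_{\nu \frown \langle j \rangle})\}$ is inconsistent. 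The modeling property for $\mathcal{T}_{\omega}$-indiscernibles (Fact \ref{modeling}(1)) allows us further to assume the tree is $s$-indiscernible over $\emptyset$.

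From this tree, I would set $c_{i,0} = a_{0^{i+1}}$, $c_{i,1} = a_{0^{i} \frown \langle 1 \rangle}$, and realize each $a_i$ along the ``third-child'' branch $0^{i+1} \frown \langle 2 \rangle \frown 0^{\omega}$, chosen collectively as a Morley sequence in a global invariant extension of the union of those branch types so that the joint type of the $a_i$'s over the tree is canonically determined. Property (1) is immediate, since the defining branch of $a_i$ passes through $a_{0^{k+1}} = c_{k,0}$ for every $k \leq i$. Property (2) follows from applying the tree inconsistency with $\nu = 0^{j}$, $\eta = 0^{k} \frown \langle 1 \rangle \unrhd \nu \frown \langle 0 \rangle$, for $j < k$.

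The real work is in property (3). The key observation is that $c_{i,0}$ and $c_{i,1}$ are siblings at level $i+1$ below the common parent $a_{0^{i}}$, and the $L_{s}$-qftp of the finite base consisting of the ancestors $\{a_{0^{k}} : k \leq i\}$, the ``right-siblings'' $\{a_{0^{k} \frown \langle 1 \rangle} : k < i\}$, and the $\langle 2 \rangle$-subtree elements $\{a_{0^{j+1} \frown \langle 2 \rangle \frown \sigma} : j < i,\ \sigma\}$ along which the $a_j$'s are realized, together with $c_{i,0}$, is identical to the qftp of the same base together with $c_{i,1}$: the $\langle 2 \rangle$-subtrees are disjoint from both children of $a_{0^{i}}$, and the ancestor/earlier-right-sibling elements are symmetric with respect to the swap. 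The $s$-indiscernibility of the tree, together with the canonical (invariant) choice of the $a_{j}$'s, then delivers $c_{i, 0} \equiv_{a_{<i} c_{<i,0} c_{<i,1}} c_{i, 1}$.

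Having produced a (not necessarily indiscernible) sequence $\langle (a_{i}, c_{i, 0}, c_{i, 1}) : i < \omega \rangle$ satisfying (1), (2), (3), I would apply the standard Ramsey/EM extraction for linearly ordered indiscernibles to obtain an $\emptyset$-indiscernible sequence locally based on it; each of (1), (2), (3) is a condition on a finite initial segment and hence is preserved. The main obstacle is (3): property (1) forces the $a_{i}$'s to realize certain $\varphi$-formulas, while (3) demands that $a_{<i}$ not distinguish the siblings $c_{i, 0}$ and $c_{i, 1}$---a tension that cannot be resolved inside the binary SOP$_{1}$ tree, where the only children of each $a_{0^{j+1}}$ are precisely $c_{j+1, 0}$ and $c_{j+1, 1}$ themselves. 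This is exactly why the passage to an $\omega$-branching witness, providing a ``third child'' disjoint from both the $\langle 0 \rangle$- and $\langle 1 \rangle$-subtrees of each $a_{0^{j+1}}$, is essential.
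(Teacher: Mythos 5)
Your overall strategy---reading the pairs $(c_{i,0},c_{i,1})$ off the two children of each spine node of an $s$-indiscernible tree, and housing the $a_{j}$'s in disjoint ``third-child'' subtrees so that the sibling swap is a quantifier-free tree-type-preserving move over the base---is genuinely different from the paper's proof, which never touches trees: there the array of pairs from Fact \ref{karyversion} is rebuilt $\omega$ times by a pigeonhole argument, at stage $n$ replacing the tail by an automorphic image so that the new $n$-th pair is, by construction, a pair of elements already known to be equivalent over $a_{\leq n}$ and the earlier columns. Your computation of the quantifier-free $L_{s}$-types (that $0^{i+1}$ and $0^{i}\frown\langle 1\rangle$ have the same type over the spine, the earlier right-siblings, and the $\langle 2\rangle$-subtrees) is correct, as are your derivations of (1) and (2). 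Note, however, that for (2) you only use the asymmetric SOP$_{1}$ instance (the $1$-child is inconsistent with the $0$-subtree); the fully symmetric condition you claim for the $\omega$-branching tree (``for any $i\neq j$'') is stronger than what SOP$_{1}$ gives and does not follow from a routine compactness argument---it points toward SOP$_{2}$---so you should claim only the instance you actually need.

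The genuine gap is in (3), exactly where you locate the ``real work.'' The $s$-indiscernibility gives an automorphism $\tau$ fixing the tree-base pointwise and swapping $c_{i,0}$ with $c_{i,1}$, but the realizations $a_{<i}$ are not part of the tree, and you need some such automorphism to fix them as well. Your proposed mechanism---realize the $a_{j}$'s via ``a global invariant extension of the union of those branch types''---is not available: a consistent partial type over a set that is not a model need not have a global extension invariant over that set, and the extensions that always do exist (coheirs over a model $M_{0}$ containing the tree) are only $M_{0}$-invariant, while $\tau$ does not fix $M_{0}$, so invariance cannot be used to transport $\text{tp}(a_{<i}/Bc_{i,0})$ to $\text{tp}(a_{<i}/Bc_{i,1})$. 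As written, nothing prevents a badly chosen $a_{<i}$ from distinguishing the two siblings over the base. The cleanest repair within your framework is to make the branch realizations part of the indexed structure \emph{before} extracting $s$-indiscernibility: attach to each node $\eta$ a realization $d_{\eta}$ of the finite path type $\{\varphi(x;a_{\eta\restriction k}) : k\leq |\eta|\}$, extract an $s$-indiscernible tree of pairs $(a_{\eta}d_{\eta})_{\eta}$ (these path conditions are determined by quantifier-free tree types, hence preserved under local basedness), and set $a_{i}=d_{0^{i+1}\frown\langle 2\rangle}$; your tree-type computation then applies verbatim to the enlarged tuples and yields (3).
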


\begin{proof}
Let $\kappa$ be a cardinal sufficiently large relative $|M|$ to apply Fact \ref{modeling}.  Because $T$ has SOP$_{1}$, we know by Fact \ref{karyversion} and compactness, there is a formula $\varphi(x;y)$ and an indiscernible sequence $(c_{i,0},c_{i,1})_{i < \kappa}$ such that 
\begin{itemize}
\item $\{\varphi(x;c_{i,0}) : i < \kappa\}$ is consistent.
\item $\{\varphi(x;c_{i,1}) : i < \kappa\}$ is $2$-inconsistent.
\item For all $i < \kappa$, $c_{i,0} \equiv_{c_{<i,0}c_{<i,1}} c_{i,1}$ and $c_{n,0} \equiv_{a_{<n}c_{<n,0}c_{<n,1}} c_{n,1}$.  
\end{itemize}
By induction on $n < \omega$, we will build $(a_{i})_{i < n}$ and $(c^{n}_{i,0},c^{n}_{i,1})_{i < \kappa}$ such that, for all $n < \omega$, 
\begin{enumerate}
\item $\{\varphi(x;c^{n}_{i,0}) : i < \kappa\}$ is consistent.
\item $\{\varphi(x;c^{n}_{i,1}) : i < \kappa\}$ is $2$-inconsistent.
\item For all $i < \kappa$, $c^{n}_{i,0} \equiv_{c^{n}_{<i,0}c^{n}_{<i,1}} c^{n}_{i,1}$ and $c^{n}_{n,0} \equiv_{a_{<n}c^{n}_{<n,0}c^{n}_{<n,1}} c^{n}_{n,1}$.  
\item For all $i < n$, $a_{i} \models \{\varphi(x;c^{n}_{j,0}) : j \leq i\}$.
\item For $m \leq n$, $(c^{m}_{m,0},c^{m}_{m,1}) = (c^{n}_{m,0},c^{n}_{m,1})$.  
\end{enumerate}
To begin, we define $(c^{0}_{i,0},c^{0}_{i,1})_{i < \kappa}$ by setting $(c^{0}_{i,0},c^{0}_{i,1}) = (c_{i,0}, c_{i,1})$ for all $i < \kappa$.  This, together with the empty sequence of $a_{i}$'s satisfies (1)\textemdash(3). For $n=0$, (4) and (5) are vacuous, so this handles the base case.

Now suppose for $n$, we have constructed $(a_{i})_{i < n}$, $(c^{n}_{i,0},c^{n}_{i,1})_{i < \kappa}$.  Choose $a_{n} \models \{\varphi(x;c^{n}_{i,0}) : i \leq n\}$.  Now by the pigeonhole principle, there are $i_{*} > j_{*} > n$ such that $c^{n}_{i_{*},1} \equiv_{a_{\leq n}c^{n}_{\leq n,0}c^{n}_{\leq n,1}} c^{n}_{j_{*},1}$.  As $c^{n}_{i_{*},0} \equiv_{c^{n}_{<i_{*},0}c^{n}_{<i_{*},1}} c^{n}_{i_{*},1}$, there is $\sigma \in \text{Aut}(\mathbb{M}/c^{n}_{<i_{*},0}c^{n}_{<i_{*},1})$ with $\sigma(c^{n}_{i_{*},0}) = c^{n}_{i_{*},1}$.  Define a new array by setting $(c^{n+1}_{m,0},c^{n+1}_{m,1}) = (c^{n}_{m,0},c^{n}_{m,1})$ for all $m \leq n$, $(c^{n+1}_{n+1,0},c^{n+1}_{n+1,1}) = (c^{n}_{i_{*},1},c^{n}_{j_{*},1})$, and finally $(c^{n+1}_{n+\alpha,0},c^{n+1}_{n+\alpha,1}) = \sigma(c^{n}_{i_{*}+\alpha,0},c^{n}_{i_{*}+\alpha,1})$ for all $2 \leq \alpha < \kappa$.  

Now we check that this satisfies the requirements.  For (1), note that $\{\varphi(x;c^{n+1}_{i,0}) : i < \kappa\}$ is equal to $\{\varphi(x;\sigma(c^{n}_{i,0})) : i \leq n \text{ or }i \geq i_{*}\}$ and this is consistent because $\{\varphi(x;c^{n}_{i,0}) : i \leq n \text{ or } i \geq i_{*}\}$ is consistent and $\sigma$ is an automorphism.  Likewise, $\{\varphi(x;c^{n+1}_{i,1}) : i < \kappa\}$ is equal to $\{\varphi(x;\sigma(c^{n}_{i,1})) : i \leq n, i = j_{*}, \text{ or }i > i_{*}\}$, so this is $2$-inconsistent because $\{\varphi(x;c^{n}_{i,1}) : i < \kappa\}$ is $2$-inconsistent and $\sigma$ is an automorphism.  (3)\textemdash(5) are immediate from our construction.  This completes the induction.

Now define $(c^{\omega}_{i,0},c^{\omega}_{i,1})_{i < \omega}$ such that $(c^{\omega}_{i,0},c^{\omega}_{i,1}) = (c^{j}_{i,0},c^{j}_{i,1})$ for some, equivalently all, $j \geq i$.  Then $(a_{i},c^{\omega}_{i,0},c^{\omega}_{i,1})_{i < \omega}$ satisfies conditions (1)\textemdash(3) so, after extracting an indiscernible sequence, we conclude.  
\end{proof}

\begin{rem}
If $\varphi(x;y)$ witnesses SOP$_{1}$ in $T$, it is clear from the definition that $\varphi$ will witness SOP$_{1}$ in any expansion $T'$ of $T$ and hence we may apply the above lemma to find $(a_{i},c_{i,0},c_{i,1})_{i < \omega}$ which are morever $L^{\text{Sk}}$-indiscernible and satisfy $c_{i,0} \equiv^{L^{Sk}}_{a_{<i},c_{<i,0}c_{<i,1}} c_{i,1}$ for all $i$ in $\mathbb{M}^{Sk}$, where the $L^{\text{Sk}}$-structure $\mathbb{M}^{\text{Sk}}$ is a monster model of an expansion of $T$ with Skolem functions.  See, e.g., \cite[Remark 2.5]{kaplan2017kim}.
\end{rem}

\begin{prop} \label{transitivity implies nsop1}
Suppose $T$ has SOP$_{1}$.  Then there are models $M \prec N \models T$ and tuples $a$ and $c$ such that $a \ind^{u}_{M} N$, $a \ind^{u}_{N}c$ and $a \nind^{K}_{M} Nc$.  
\end{prop}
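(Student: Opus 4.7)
My plan is to construct an explicit counterexample to transitivity inside a Skolemization of $T$, leveraging the $L^{\mathrm{Sk}}$-indiscernible sequence produced by Lemma \ref{goodindiscernible} and the remark following it. First, I would extend that sequence to an $L^{\mathrm{Sk}}$-indiscernible $(a_i, c_{i, 0}, c_{i, 1})_{i \in \mathbb{Q}}$ in the Skolemized monster $\mathbb{M}^{\mathrm{Sk}}$, preserving the three lemma conditions (with the conjugacy in (3) lifted to $L^{\mathrm{Sk}}$). The density of $\mathbb{Q}$ will be essential for freely shifting indices into prescribed intervals.

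Next, I would set $M = \mathrm{Sk}\bigl((a_i, c_{i, 0}, c_{i, 1}) : i < 0\bigr)$ and $N = \mathrm{Sk}\bigl((a_i, c_{i, 0}, c_{i, 1}) : i < 0 \text{ or } i > 1\bigr)$; their $L$-reducts give $M \prec N \models T$. I would take $a = a_{1/2}$ and $c = c_{1/4, 0}$, noting that $a \models \varphi(x; c)$ by property (1). Both $a \ind^u_M N$ and $a \ind^u_N c$ then reduce, via $L^{\mathrm{Sk}}$-indiscernibility, to the observation that the index $1/2$ may be shifted to lie in $(\max I^-, 0)$ (respectively $(1, \min I^+)$), where $I^-$ and $I^+$ collect the negative (resp. $>1$) parameter indices of a given formula; density of $\mathbb{Q}$ makes these intervals nonempty, and the shifted $a_{j'}$ lies in $M$ (resp. $N$).

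The heart of the argument is showing $\varphi(x; c_{1/4, 0}) \in \text{tp}(a/MNc)$ Kim-divides over $M$. Property (3) of the lemma at $i = 1/4$ gives $c_{1/4, 0} \equiv_M c_{1/4, 1}$ (all generating indices of $M$ lie below $1/4$), and $L^{\mathrm{Sk}}$-indiscernibility then yields $c_{1/4, 1} \equiv_M c_{j, 1}$ for every $j > 0$. Fixing a strictly increasing cofinal sequence $(j_k)_{k < \omega}$ in $\mathbb{Q}^+$, the sequence $(c_{j_k, 1})_{k}$ is $M$-indiscernible and $\varphi$-$2$-inconsistent by property (2). I plan to realize it as a Morley sequence in a global $M$-invariant type $q$, defined on parameters $b$ in the Skolem hull of the indiscernible sequence by the ``forward limit'' rule $\psi(x; b) \in q$ if and only if $\models \psi(c_{j, 1}; b)$ for all sufficiently large $j \in \mathbb{Q}^+$: this is well-defined and $M$-invariant on such parameters by $L^{\mathrm{Sk}}$-indiscernibility, and extended to a truly global $M$-invariant $q$ by stretching the indiscernible sequence (or invoking a general invariant-extension principle). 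Conjugating by an $M$-automorphism sending $c_{j_0, 1}$ to $c_{1/4, 0}$ transfers this witness to $c$, establishing Kim-dividing of $\varphi(x; c_{1/4, 0})$ over $M$ and hence $a \nind^K_M Nc$. The principal technical obstacle is the rigorous construction of the global $M$-invariant type $q$; all other steps amount to bookkeeping with $L^{\mathrm{Sk}}$-indiscernibility shifts enabled by the density of $\mathbb{Q}$.
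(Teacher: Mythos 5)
Your overall architecture matches the paper's: Skolemize, stretch the array from Lemma \ref{goodindiscernible} to a $\mathbb{Q}$-indexed $L^{\mathrm{Sk}}$-indiscernible sequence, take $M$ and $N$ to be Skolem hulls of the parts with indices in $(-\infty,0)$ and $(-\infty,0)\cup(1,\infty)$, and verify the two coheir statements by shifting the index of $a$ into the appropriate gap (the paper uses $a_1$ and $c_{0,0}$ where you use $a_{1/2}$ and $c_{1/4,0}$, and its $N$ omits the $c_{i,j}$ for $i>1$, but none of that matters). The index-shifting arguments for $a\ind^u_M N$ and $a\ind^u_N c$ are correct.

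The gap is exactly where you flag it, and it is not merely technical: your construction of the global $M$-invariant type $q$ does not work as described. The ``forward limit'' type along $(c_{j,1})_{j\in\mathbb{Q}^+}$ (equivalently, the $\mathcal{U}$-limit of $\mathrm{tp}(c_{j,1}/\mathbb{M})$ for an ultrafilter concentrating on end segments of $\mathbb{Q}^+$) is finitely satisfiable in, and hence invariant over, the set $\{c_{j,1}: j>0\}$ --- but that set is \emph{not} contained in $M$, so the resulting global type has no reason to be $M$-invariant: two tuples with the same type over $M$ may relate differently to the tail of a sequence living outside $M$. There is no general principle by which an invariant type over a small set, or over the hull of an indiscernible sequence, extends to a \emph{global} $M$-invariant type, so the parenthetical ``invoking a general invariant-extension principle'' cannot be cashed in. Since producing such a $q$ is the entire content of the Kim-dividing claim, the proof is incomplete at its crucial step.

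The paper's fix is small but essential: instead of the right tail, use the left tail $\{c_{i,1}: i<0\}$, which lies \emph{inside} $M$. Take an ultrafilter $\mathcal{D}$ on $M$ containing $\{c_{i,1}: i\in(\epsilon,0)\}$ for every $\epsilon<0$; then $\mathrm{Av}(\mathcal{D},\mathbb{M}^{\mathrm{Sk}})$ is finitely satisfiable in $M$, hence globally $M$-invariant for free, and by $L^{\mathrm{Sk}}$-indiscernibility $c_{0,1}$ (or your $c_{1/4,1}$) realizes its restriction to $M$. A Morley sequence in this type starting at $c_{0,1}$ is $\varphi$-$2$-inconsistent by property (2), and the conjugacy $c_{0,0}\equiv_M c_{0,1}$ from property (3) transfers the Kim-dividing to $c$. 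If you replace your forward-limit construction by this coheir construction, the rest of your argument goes through.
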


\begin{proof}
Fix a Skolemization $T^{\text{Sk}}$ of $T$ in the language $L^{\text{Sk}}$ and work in a monster model $\mathbb{M}^{\text{Sk}} \models T^{\text{Sk}}$.  We will write $\equiv^{L^{\mathrm{Sk}}}$ to denote equality of types in the language $L^{\mathrm{Sk}}$ and $\equiv$ to denote equality of types in the language $L$.  By Lemma \ref{goodindiscernible} and compactness, we can find an $L$-formula $\varphi(x;y)$ and an $L^{\text{Sk}}$-indiscernible sequence $(a_{i},c_{i,0},c_{i,1})$ such that 
\begin{enumerate}
\item For all $i \in \mathbb{Q}$, $a_{i} \models \{\varphi(x;c_{j,0}) : j \leq i\}$.
\item $\{\varphi(x;c_{i,1}) : i \in \omega\}$ is $2$-inconsistent.
\item For all $i \in \mathbb{Q}$, $c_{i,0} \equiv^{L^{\text{Sk}}}_{a_{<i},c_{<i,0}c_{<i,1}} c_{i,1}$.
\end{enumerate}
Define $M = \text{Sk}(a_{<0}c_{<0,0}c_{<0,1})$ and $N = \text{Sk}(a_{<0},c_{<0,0},c_{<0,1},a_{>1})$.  Note that we have $M \prec N$.  In the claims below, independence is understood to mean independence with respect to the $L$-theory $T$.  

\textbf{Claim 1}:  $a_{1} \ind^{u}_{M} N$.

\emph{Proof of claim}:  Fix a formula $\psi(x;n) \in \text{tp}(a_{1}/N)$.  We can write the tuple $n = t(a,c)$ where $t$ is a tuple of Skolem terms, $a$ is a finite tuple from $a_{<0}a_{>1}$ and $c$ is a finite tuple from $c_{<0,0}c_{<0,1}$.  As $a$ and $c$ are finite, there is some rational $\epsilon <0$ such that $a$ and $c$ come from $a_{<\epsilon}a_{>1}$ and $c_{<\epsilon,0}c_{<\epsilon,1}$ respectively.  By indiscernibility, $\psi(x;n)$ is realized also by any $a_{\delta}$ with $\epsilon < \delta < 0$, which is in $M$.\qed

\textbf{Claim 2}:  $a_{1} \ind^{u}_{N} c_{0,0}$.

\emph{Proof of claim}:  This has a similar proof to Claim 1.  Given any $\psi(x;n,c_{0,0}) \in \text{tp}(a_{1}/Nc_{0,0})$, as before, we can write the tuple $n = t(a,c)$ where $t$ is a tuple of Skolem terms, $a$ is a finite tuple from $a_{<0}a_{>1}$ and $c$ is a finite tuple from $c_{<0,0}c_{<0,1}$.  Because these tuples are finite, there is a rational $\epsilon > 1$ such that $a$ comes from $a_{<0}a_{>\epsilon}$.  Then by indiscernibility, $\psi(x;n,c_{0,0})$ is satisfied by any $a_{\delta}$ with $1 < \delta < \epsilon$, all of which are in $N$. \qed

\textbf{Claim 3}:  $a_{1} \nind^{K}_{M} Nc_{0,0}$.

\emph{Proof of claim}:  We will show even $a_{1} \nind^{K}_{M} c_{0,0}$.  Let $\mathcal{D}$ be an ultrafilter on $M$ containing $\{c_{i,1} : i \in (\epsilon,0)\}$ for every $\epsilon < 0$.  By $L^{\mathrm{Sk}}$-indiscernibility, we have $c_{0,1} \models \text{Av}(\mathcal{D},M)$.  Then there is a sequence $(b_{i})_{i < \omega} \models \text{Av}(\mathcal{D}, \mathbb{M}^{\text{Sk}})^{\otimes \omega}|_{M}$ with $b_{0} = c_{0,1}$.  By (2) and the choice of $\mathcal{D}$, we know $\{\varphi(x;b_{i}) : i < \omega\}$ is $2$-inconsistent so $\varphi(x;c_{0,1})$ Kim-divides over $M$.  Moreover, $c_{0,0} \equiv^{L^{\text{Sk}}}_{a_{<0}c_{<0,0}c_{<0,1}} c_{0,1}$ so, in particular, $c_{0,0} \equiv_{M} c_{0,1}$ from which it follows also that $\varphi(x;c_{0,0})$ Kim-divides over $M$.  By (1), we have $\models \varphi(a_{1},c_{0,0})$ so $a_{1} \nind^{K}_{M} c_{0,0}$.  \qed

The claims taken together show $a_{1} \ind^{n}_{M} N$, $a_{1} \ind^{u}_{N} c_{0,0}$, and $a_{1} \nind^{K}_{M} c_{0,0}$, which completes the proof.  
\end{proof}

\begin{cor}
The following are equivalent:
\begin{enumerate}
\item $T$ is NSOP$_{1}$.
\item $\ind^{K}$ satisfies the following weak form of transitivity:  if $M \prec N \models T$, $a \ind^{u}_{M} N$ and $a \ind^{u}_{N} b$, then $a \ind^{K}_{M} Nb$.  
\item $\ind^{K}$ satisfies transitivity:  if $M \prec N \models T$, $a \ind^{K}_{M} N$ and $a \ind^{K}_{N} b$, then $a \ind^{K}_{M} Nb$.  
\end{enumerate}
\end{cor}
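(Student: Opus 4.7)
The plan is to prove this corollary by assembling three implications in a cycle: $(1)\Rightarrow(3)\Rightarrow(2)\Rightarrow(1)$, using results already established in the paper so that essentially no new argument is required.

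For $(1)\Rightarrow(3)$, I would simply cite Theorem~\ref{transitivity theorem}, which establishes exactly this statement: transitivity of $\ind^{K}$ over models in any NSOP$_{1}$ theory.

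For $(3)\Rightarrow(2)$, the key observation is the standard implication recorded right after the definition of Kim-independence, namely $\ind^{u}\Rightarrow\ind^{f}\Rightarrow\ind^{K}$. Thus if $a\ind^{u}_{M}N$ and $a\ind^{u}_{N}b$, then in particular $a\ind^{K}_{M}N$ and $a\ind^{K}_{N}b$, so (3) applies and yields $a\ind^{K}_{M}Nb$.

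For $(2)\Rightarrow(1)$, I would argue by contrapositive using Proposition~\ref{transitivity implies nsop1}. Suppose $T$ has SOP$_{1}$; that proposition produces $M\prec N\models T$ together with tuples $a$ and $c$ satisfying $a\ind^{u}_{M}N$, $a\ind^{u}_{N}c$, and $a\nind^{K}_{M}Nc$. This is precisely a failure of the weak transitivity statement in (2), so (2) implies (1).

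There is no real obstacle here, since each ingredient is already in hand; the only thing to be careful about is the direction of implication $(3)\Rightarrow(2)$, where one must remember that the hypotheses in (2) use coheir independence (which is \emph{stronger}) while the conclusion uses Kim-independence, so the implication goes through without difficulty. The whole corollary amounts to observing that Proposition~\ref{transitivity implies nsop1} already produces the sharpest possible counterexample to transitivity under SOP$_{1}$ (with both ``input'' independences being coheir independences), and that Theorem~\ref{transitivity theorem} conversely delivers the strongest natural form of transitivity in the NSOP$_{1}$ setting.
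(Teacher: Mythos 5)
Your proposal is correct and follows exactly the paper's own argument: $(1)\Rightarrow(3)$ by Theorem \ref{transitivity theorem}, $(2)\Rightarrow(1)$ by Proposition \ref{transitivity implies nsop1}, and $(3)\Rightarrow(2)$ from the fact that $\ind^{u}$ implies $\ind^{K}$. Nothing is missing.
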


\begin{proof}
Theorem \ref{transitivity theorem} establishes (1)$\implies$(3), Proposition \ref{transitivity implies nsop1} shows (2)$\implies$(1), and (3)$\implies$(2) is immediate from the fact that $\ind^{u}$ implies $\ind^{K}$.  
\end{proof}

\section{$\ind^{K}$-Morley sequences are witnesses}

In this section, we characterize NSOP$_{1}$ by the property that $\ind^{K}$-Morley sequences are witnesses to Kim-dividing.  The non-structure direction of this characterization was already observed in \cite[Theorem 3.16]{kaplan2017kim}: if $T$ has SOP$_{1}$ then $\ind^{K}$-Morley sequences will not always witness Kim-dividing.  The more interesting direction goes the other way, showing that in the NSOP$_{1}$ context, $\ind^{K}$-Morley sequences are witnesses.  This is a significant technical development in the study of NSOP$_{1}$ theories, as it, for example, obviates the need in many cases to construct tree Morley sequences.  We give some applications below.  


\begin{thm} \label{witnessing theorem}
Suppose that $\varphi\left(x,a\right)$ Kim-divides over $M$. Suppose
that $\langle a_{i}: i<\omega\rangle$ is an $\ind^{K}$-Morley sequence over $M$,
starting with $a$. Then $\{\varphi\left(x,a_{i}\right):i<\omega\}$
is inconsistent.
\end{thm}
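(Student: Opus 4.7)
The plan is to argue by contradiction. Suppose $c \models \{\varphi(x, a_i) : i < \omega\}$. By Ramsey and compactness, I may replace $\langle a_i \rangle$ with an $Mc$-indiscernible sequence of the same Ehrenfeucht--Mostowski type over $M$; this remains an $\ind^{K}$-Morley sequence, and $c$ still realizes the formula set. So I assume henceforth that $\langle a_i \rangle$ is $Mc$-indiscernible.

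A useful preliminary observation, deduced from transitivity (Theorem \ref{transitivity theorem}) together with symmetry, is that Kim-dividing is preserved under Kim-independent base extensions: if $a \ind^{K}_{M} N$ for some $M \prec N$, then $\varphi(x,a)$ Kim-divides over $N$ as well. Indeed, if some $d \models \varphi(x, a)$ had $d \ind^{K}_{N} a$, then by symmetry $a \ind^{K}_{N} d$; combined with $a \ind^{K}_{M} N$, transitivity yields $a \ind^{K}_{M} Nd$ and hence $a \ind^{K}_{M} d$, contradicting $\varphi(x, a)$ Kim-dividing over $M$. This observation will be the device that lets the formula $\varphi(x,a)$ continue to Kim-divide over the enlarged bases appearing in the construction to follow.

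The core of the argument is an inductive tree construction in the spirit of Proposition \ref{goodseq}, but additionally tracking a realizer of $\varphi$ at every stage. By induction on $\alpha$, I build trees $(a^{\alpha}_{\eta})_{\eta \in \mathcal{T}_{\alpha}}$ together with companion elements $c^{\alpha}$ such that the tree is $s$-indiscernible and spread out over $M$, rooted at $a$ at the distinguished node, and $c^{\alpha} \models \varphi(x, a^{\alpha}_{\zeta_{i}})$ along the whole distinguished branch. Passing to a Morley tree of height $\omega$ via Fact \ref{modeling}, one obtains a Morley tree $(a_{\eta})_{\eta \in \mathcal{T}_{\omega}}$ over $M$ whose leftmost branch $\langle a_{\zeta_{i}} : i < \omega\rangle$ is a tree Morley sequence over $M$ starting with $a$, together with an element $c_{*}$ realizing $\{\varphi(x, a_{\zeta_{i}}) : i < \omega\}$. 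But by Kim's lemma for tree Morley sequences (Fact \ref{witnessfacts}(1)) and the hypothesis that $\varphi(x, a)$ Kim-divides over $M$, this formula set must be inconsistent, contradicting the existence of $c_{*}$.

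The main obstacle will be the successor step of the construction, and specifically the task of extending the realizer. Following the blueprint of Lemma \ref{weirdextension} and Proposition \ref{goodseq}, one first takes a coheir sequence of copies of the current tree over $M$ (so that the chain condition, Lemma \ref{chain condition for indk}, controls how the existing realizer lifts to the coheir sequence of copies), and then splices in a new root together with a continuation of the realizer by applying the strengthened independence theorem (Fact \ref{fact:Kim Morley is consistent}(3)). Coordinating the Morley tree conditions with the propagation of the realizer --- and using the preliminary observation above to ensure that $\varphi(x, a)$ retains its Kim-dividing status over the relevant enlarged bases generated along the way --- is the delicate technical task.
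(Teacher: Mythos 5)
Your ``preliminary observation'' is correct and is in fact the same use of transitivity that the paper makes (if $a\ind^{K}_{M}N$ and $d\ind^{K}_{N}a$, then $a\ind^{K}_{M}Nd$, hence $d\ind^{K}_{M}a$). But the core of your argument --- the tree construction carrying a realizer --- has a genuine gap, and the one concrete mechanism you propose for the successor step is unsound. You suggest taking a coheir sequence of copies of the current tree over $M$ and using the chain condition (Lemma \ref{chain condition for indk}) to ``control how the existing realizer lifts to the coheir sequence of copies.'' That lemma requires $c^{\alpha}\ind^{K}_{M}(a^{\alpha}_{\eta})_{\eta}$, which fails by design: $c^{\alpha}$ realizes $\varphi(x,a^{\alpha}_{\zeta_{0}})$, a formula that Kim-divides over $M$, so $c^{\alpha}\nind^{K}_{M}(a^{\alpha}_{\eta})_{\eta}$. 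Worse, the realizer provably cannot follow all copies: a coheir sequence of copies is a witness to Kim-dividing over $M$ (Remark \ref{coheirs are strong witnesses}), so $\{\varphi(x,a^{\alpha}_{\zeta_{0},i}):i<\omega\}$ is already inconsistent. If instead you intend the realizer to follow only the $0$th copy, then the chain condition plays no role and the actual difficulty --- producing a new root $b\equiv_{M}a$ below an invariant Morley sequence of copies together with a realizer of $\varphi(x,b)\wedge\bigwedge_{\beta}\varphi(x,a^{\alpha}_{\zeta_{\beta}})$, using only the hypothesis $a_{i}\ind^{K}_{M}a_{<i}$ --- is left entirely unaddressed. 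The strengthened independence theorem does not obviously apply here either, since the realizer-plus-path configuration is inherently Kim-dependent over $M$. This missing step is precisely the content of the theorem, not a routine adaptation of Proposition \ref{goodseq}.

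For comparison, the paper's proof avoids trees altogether. It stretches the sequence to length $\kappa^{+}$ for $\kappa=|M|+|T|$ and uses extension to build an increasing continuous chain of models $\langle N_{i}:i<\kappa^{+}\rangle$ with $a_{<i}\subseteq N_{i}$, $|N_{i}|\leq\kappa$, and $a_{i}\ind^{K}_{M}N_{i}$. Local character on a club (Remark \ref{local character for bigger cardinals}) then yields some $i$ with $c\ind^{K}_{N_{i}}a_{i}$ for any purported realizer $c$, and transitivity plus symmetry pulls this down to $N_{i}c\ind^{K}_{M}a_{i}$, contradicting Kim-dividing. If you want to salvage a proof along your lines, you would need a genuinely new idea for the successor step; otherwise the local-character route is the one that works.
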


\begin{proof}
Suppose not. Let $\kappa=\left|M\right|+|T|$ and extend the sequence to have length $\kappa^{+}$.
It suffices to find an increasing continuous sequence of models $\langle N_{i}: i<\kappa^{+}\rangle$
such that $N_{i}$ contains $a_{<i}$, $\left|N_{i}\right|\leq \kappa$,
$N_{0}=M$ and such that $a_{i}\ind_{M}^{K}N_{i}$. To see this, suppose that $c\models\{\varphi\left(x,a_{i}\right):i<\kappa^{+}\}$.
Then by local character, Remark \ref{local character for bigger cardinals}, for some $i<\kappa^{+}$, $c\ind_{N_{i}}^{K}N_{\kappa^{+}}$
where $N_{\kappa^{+}}=\bigcup_{i<\kappa^{+}}N_{i}$, as $\{N_{i} : \kappa \leq i < \kappa^{+}\}$ is a club subset of $[N_{\kappa^{+}}]^{\kappa}$. Hence $c\ind_{N_{i}}^{K}a_{i}$.
However, $a_{i}\ind_{M}^{K}N_{i}$ and hence by transitivity and symmetry,
$N_{i}c\ind_{M}^{K}a_{i}$ contradicting our assumption that $\varphi\left(x,a\right)$
Kim-divides over $M$ and hence also $\varphi(x;a_{i})$, by invariance. 

\textbf{Claim}:  There is a partial type $\Gamma(\overline{x})$ over $a_{<\kappa^{+}}M$ such that:
\begin{enumerate}
\item We have $\overline{x}=\langle \overline{x}_{\alpha}: \alpha<\kappa^{+} \rangle$ is an increasing
continuous sequence of tuples of variables such that $\left|\overline{x}_{\alpha}\right|=\kappa$,
and such that $\overline{x}_{\alpha+1}$ contains $\kappa$
new variables not in $\overline{x}_{\alpha}$ for all $\alpha<\kappa^{+}$.
\item $\Gamma\left(\overline{x}\right)$ asserts that $\overline{x}_{\alpha}$ enumerates a model containing
$Ma_{<\alpha}$ for all $\alpha < \kappa^{+}$.
\end{enumerate}

\emph{Proof of claim}:  We define $\Gamma(\overline{x})$ as a continuous increasing union of partial types $\Gamma_{\alpha}(\overline{x}_{\alpha})$ for $\alpha < \kappa^{+}$.  Suppose we are given $\Gamma_{\delta}(\overline{x}_{\delta})$ for $\delta < \alpha$.  

If $\alpha = \beta+1$, then we define $\overline{x}_{\alpha,0} = \overline{x}_{\beta}$ and then, given $\overline{x}_{\alpha,i}$, we define $\Lambda_{i}$ to be the set of all partitioned formulas $\varphi(y;\overline{x})$ where the parameters of $\varphi$ come from $a_{<\alpha}M$ and the parameter variables $\overline{x}$ of $\varphi$ are among $\overline{x}_{\alpha,i}$.  Now define $\overline{x}_{\alpha,i+1} = \overline{x}_{\alpha,i}$ together with a new variable $x_{\lambda}$ for each $\lambda \in \Lambda_{i}$.  Finally $\overline{x}_{\alpha} = \bigcup_{i<\omega} \overline{x}_{\alpha,i}$.  Let $\Gamma_{\alpha,0} = \Gamma_{\beta}$ and, given $\Gamma_{\alpha,i}$, we define $\Gamma_{\alpha,i+1}$ by 
$$
\Gamma_{\alpha,i+1}(\overline{x}_{\alpha,i+1}) = \Gamma_{\alpha,i}(\overline{x}_{\alpha,i}) \cup \{(\exists y)\varphi(y;\overline{x}) \to \varphi(x_{\lambda};\overline{x}) : \lambda = \varphi(y;\overline{x}) \in \Lambda_{i}\}.
$$
Then $\Gamma_{\alpha}(\overline{x}_{\alpha}) = \bigcup_{i < \omega} \Gamma_{\alpha,i}(\overline{x}_{\alpha,i})$.  Note that because $\mathbb{M} \models (\exists y)[y = c]$ for each $c \in a_{<\alpha}M$,  any realization of $\Gamma_{\alpha}(\overline{x}_{\alpha})$ will contain $a_{<\alpha}M$ and will be a model by the Tarski-Vaught test.  

To complete the induction, we note that if $\alpha$ is a limit and we are given $\Gamma_{\delta}$ for all $\delta < \alpha$, then we can set $\overline{x}_{\alpha} = \bigcup_{\delta < \alpha} \overline{x}_{\delta}$ and $\Gamma_{\alpha}(\overline{x}_{\alpha}) = \bigcup_{\delta < \alpha} \Gamma_{\delta}(\overline{x}_{\delta})$, which has the desired property as the union of an elementary chain is a model.  \qed


Lastly, we define $\Delta(\overline{x})$ as follows:
$$
\Delta(\overline{x}) = \Gamma(\overline{x}) \cup \{\neg \varphi(\overline{x}_{\alpha};a_{\alpha}) : \varphi(\overline{x}_{\alpha},y) \in L(M), \varphi(\overline{x}_{\alpha};a) \text{ Kim-divides over }M, \alpha < \kappa^{+}\},  
$$
where we write $\varphi(\overline{x}_{\alpha};a_{\alpha})$ to denote a formula whose variables are a finite subtuble of $\overline{x}_{\alpha}$.  To conclude, it is enough, by symmetry, to show that $\Delta\left(\overline{x}\right)$ is consistent.
By compactness, it is enough to prove this when replace $\kappa^{+}$ by a natural number $n < \omega$,
so we prove it by finding such a sequence by induction on $n$. Suppose
we found such an increasing sequence of models $N_{i}$ for $i<n$.
Let $N_{n}$ be a model containing $MN_{n-1}a_{<n}$ of size $\kappa$.
Since $a_{n}\ind_{M}a_{<n}$, we may assume by extension that $a_{n}\ind_{M}N_{n}$,
preserving all the previous types, so we are done. 
\end{proof}

\begin{cor}
\label{cor:witnessing}Suppose $T$ is NSOP$_{1}$ and $M \models T$.  If $\langle a_{i}: i<\omega \rangle$
is a Kim-Morley sequence over $M$ starting with $a_{0}=a$, then
$\varphi\left(x,a\right)$ Kim-divides over $M$ iff $\{\varphi\left(x,a_{i}\right): i<\omega\}$
is $k$-inconsistent for some $k < \omega$.
\end{cor}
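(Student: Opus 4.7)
The plan is to prove both directions by combining Theorem \ref{witnessing theorem} with Fact \ref{fact:Kim Morley is consistent}(2), plus an indiscernibility-and-compactness argument to upgrade inconsistency to $k$-inconsistency.

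For the forward direction, assume $\varphi(x,a)$ Kim-divides over $M$. Since $\langle a_i : i < \omega \rangle$ is an $\ind^{K}$-Morley sequence over $M$ beginning with $a_0 = a$, Theorem \ref{witnessing theorem} gives that $\{\varphi(x,a_i) : i < \omega\}$ is inconsistent. By compactness, some finite subset $\{\varphi(x,a_{i_1}),\ldots,\varphi(x,a_{i_k})\}$ is inconsistent. Using $M$-indiscernibility of $\langle a_i : i < \omega \rangle$, every $k$-element subsequence has the same type as $(a_{i_1},\ldots,a_{i_k})$ over $M$, hence yields an inconsistent set of instances of $\varphi$. This shows $\{\varphi(x,a_i) : i < \omega\}$ is $k$-inconsistent.

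For the reverse direction, suppose $\{\varphi(x,a_i) : i < \omega\}$ is $k$-inconsistent for some $k$; then in particular it is inconsistent. I would invoke the contrapositive of Fact \ref{fact:Kim Morley is consistent}(2): if $\varphi(x,a_0)$ did \emph{not} Kim-divide over $M$, then along the $\ind^{K}$-Morley sequence $\langle a_i : i < \omega \rangle$ the set $\{\varphi(x,a_i) : i < \omega\}$ would be consistent, contradicting our hypothesis. Hence $\varphi(x,a)$ Kim-divides over $M$.

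No step here is a serious obstacle: the genuine content lives in Theorem \ref{witnessing theorem} (giving inconsistency from Kim-dividing along $\ind^{K}$-Morley sequences) and Fact \ref{fact:Kim Morley is consistent}(2) (consistency along $\ind^{K}$-Morley sequences when no Kim-dividing occurs), and the corollary simply packages these as a biconditional together with a routine indiscernibility upgrade. The only minor care needed is to note that a $k$ witnessing inconsistency of a finite subset works uniformly for \emph{all} $k$-element subsequences by indiscernibility, which is immediate.
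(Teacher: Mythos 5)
Your proof is correct and follows exactly the paper's argument: the forward direction is Theorem \ref{witnessing theorem} plus the standard compactness-and-indiscernibility upgrade from inconsistency to $k$-inconsistency, and the reverse direction is the contrapositive of Fact \ref{fact:Kim Morley is consistent}(2).
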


\begin{proof}
One direction is Fact \ref{fact:Kim Morley is consistent}(2). The other is Theorem \ref{witnessing theorem}, since, by compactness and indiscernibility, if $\{\varphi(x;a_{i}) : i < \omega\}$ is inconsistent, it is $k$-inconsistent for some $k < \omega$.
\end{proof}

\begin{rem}
In fact, in Corollary \ref{cor:witnessing} we only need to assume $\langle a_{i} : i < \omega \rangle$ satisfies $a_{i} \ind^{K}_{M} a_{<i}$ and $a_{i} \models \text{tp}(a_{i}/M)$ for all $i < \omega$ (i.e. it is not necessary to assume that this sequence is $M$-indiscernible).  If $\varphi(x;a)$ does not Kim-divide over $M$, then $\{\varphi(x;a_{i}) : i < \omega\}$ is consistent by the independence theorem over $M$.  Conversely, if $\{\varphi(x;a_{i}) : i < \omega\}$ is not $k$-inconsistent for any $k < \omega$, then the partial type $\Gamma(y_{i} : i < \omega)$ containing, for all $i < \omega$,
\begin{itemize}
\item $y_{i} \models \text{tp}(a/M)$,
\item $\{\psi(y_{<i};y_{i}) : \psi(y_{<i};a) \in L(Ma) \text{ Kim-forks over }M\}$
\item $(\exists x)\bigwedge_{j < i} \varphi(x;y_{i})$
\end{itemize}
together with a schema asserting $\langle y_{i} : i < \omega \rangle$ is $M$-indiscernible is finitely satisfiable in $\langle a_{i} : i < \omega \rangle$ by compactness, Ramsey, and symmetry.  A realization contradicts Corollary \ref{cor:witnessing}.  
\end{rem}

\begin{cor} \label{witnesschar}
Suppose $T$ is NSOP$_{1}$, $M \models T$, and $I = \langle a_{i} : i < \omega\rangle$ is an $M$-indiscernible sequence.  The $I$ is a witness for Kim-dividing over $M$ if and only if $I$ is a $\ind^{K}$-Morley sequence over $M$.  
\end{cor}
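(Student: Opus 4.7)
The forward direction is immediate from Theorem \ref{witnessing theorem}: if $I = \langle a_i : i < \omega \rangle$ is $\ind^{K}$-Morley over $M$ and $\varphi(x; a_0)$ Kim-divides over $M$, the theorem gives directly that $\{\varphi(x; a_i) : i < \omega\}$ is inconsistent, which is the witness property.

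For the converse, the plan is to argue by contradiction, assuming $I$ is a witness to Kim-dividing over $M$ but $a_n \nind^{K}_{M} a_{<n}$ for some $n < \omega$. The strategy is to reinterpret this failure of Kim-independence as a Kim-dividing formula with parameter $a_n$, transport it to parameter $a_0$ via indiscernibility, invoke the witness property to obtain an inconsistent partial type, and then exhibit a realization of that type using the initial segment $a_{<n}$. Concretely, I would first apply symmetry of $\ind^{K}$ over models (Fact \ref{basic kimindep facts}) to get $a_{<n} \nind^{K}_{M} a_n$, which produces a formula $\varphi(y_{<n}; a_n) \in \text{tp}(a_{<n}/Ma_n)$ that Kim-forks over $M$. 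Unpacking Kim-forking as implying a finite disjunction of Kim-dividing formulas, and using that $\models \varphi(a_{<n}; a_n)$, one of the disjuncts $\psi(y_{<n}; a_n)$ is Kim-dividing over $M$ and satisfied by $a_{<n}$. Since $a_n \equiv_M a_0$ by $M$-indiscernibility of $I$, invariance of Kim-dividing under $M$-automorphism yields that $\psi(y_{<n}; a_0)$ Kim-divides over $M$ as well.

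The witness property then forces $\{\psi(y_{<n}; a_k) : k < \omega\}$ to be inconsistent. On the other hand, a second use of $M$-indiscernibility gives $\models \psi(a_0, \ldots, a_{n-1}; a_k)$ for every $k \geq n$: each tuple $(a_0, \ldots, a_{n-1}, a_k)$ has the same type over $M$ as $(a_0, \ldots, a_{n-1}, a_n)$. Hence $(a_0, \ldots, a_{n-1})$ itself realizes $\{\psi(y_{<n}; a_k) : k \geq n\}$; a final index shift, valid by indiscernibility of $I$, converts this into consistency of $\{\psi(y_{<n}; a_k) : k < \omega\}$, contradicting the inconsistency derived from the witness property. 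I do not foresee any serious obstacle here; the heart of the argument is the two-fold use of $M$-indiscernibility, first to move the Kim-dividing parameter from $a_n$ to $a_0$ so that the witness property applies, and then to furnish the initial segment $a_{<n}$ as a uniform realizer of the ostensibly inconsistent partial type.
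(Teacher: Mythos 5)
Your argument is essentially the paper's own proof, just run in contrapositive form: the forward direction is quoted from Theorem \ref{witnessing theorem} exactly as in the paper, and the converse is the same two-fold use of $M$-indiscernibility (move the parameter from $a_n$ to $a_0$ to apply the witness property, then use $a_{<n}$ together with an index shift to realize the supposedly inconsistent set). One step, however, does not work as written: when you unpack Kim-forking of $\varphi(y_{<n};a_n)$ as implying a finite disjunction of Kim-dividing formulas, those disjuncts are formulas $\psi_j(y_{<n};c_j)$ with \emph{arbitrary} parameters $c_j$ -- nothing in the definition forces them to have parameter $a_n$ -- so the disjunct satisfied by $a_{<n}$ cannot in general be transported along the sequence by indiscernibility. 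The repair is standard and is what the paper uses implicitly: in an NSOP$_1$ theory Kim-forking and Kim-dividing coincide for formulas over models (this equality is invoked explicitly in the proof of Lemma \ref{weirdextension}), so from $a_{<n}\nind^{K}_{M}a_n$ you may take $\varphi(y_{<n};a_n)$ itself to Kim-divide over $M$ and then proceed exactly as you describe. With that substitution the proof is correct and coincides with the paper's.
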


\begin{proof}
Note that if $I$ is a witness for Kim-dividing over $M$, then $a_{i} \ind^{K}_{M} a_{<i}$ for all $i < \omega$ by symmetry:  if $\varphi(x;a_{i}) \in \text{tp}(a_{<i}/Ma_{i})$, then, by $M$-indiscernibility, $a_{<i} \models \{\varphi(x;a_{j}): j \geq i\}$ so $\varphi(x;a_{i})$ does not Kim-divide over $M$, hence $a_{<i} \ind^{K}_{M} a_{i}$.  This shows that witnesses for Kim-dividing over $M$ are $\ind^{K}$-Morley over $M$.   The other direction is Theorem \ref{witnessing theorem}.
\end{proof}

\begin{cor} \label{tms = tree}
Suppose $T$ is NSOP$_{1}$ and $M \models T$.  A sequence $I$ over $M$ is tree Morley over $M$ if and only if $I$ is a total $\ind^{K}$-Morley sequence over $M$.  
\end{cor}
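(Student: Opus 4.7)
The plan is to prove the two directions separately. The forward direction---that every tree Morley sequence over $M$ is a total $\ind^{K}$-Morley sequence over $M$---is exactly Fact \ref{sequence implications}(1), so nothing remains to show there. For the converse, I would invoke Fact \ref{witnessfacts}(2), which characterizes tree Morley sequences over $M$ as precisely the strong witnesses to Kim-dividing over $M$. Thus the problem reduces to showing that a total $\ind^{K}$-Morley sequence over $M$ is a strong witness.

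So fix $I=\langle a_{i}:i<\omega\rangle$ total $\ind^{K}$-Morley over $M$, and fix $n<\omega$. I would consider the grouped sequence $J_{n}=\langle b_{j}:j<\omega\rangle$ defined by $b_{j}=(a_{nj},a_{nj+1},\ldots,a_{nj+n-1})$. This is still $M$-indiscernible, since $I$ is. From the total $\ind^{K}$-Morley hypothesis applied at $i=nj-1$ I get $a_{\geq nj}\ind^{K}_{M}a_{<nj}$, and by left monotonicity of $\ind^{K}$ (which follows trivially from the definition of Kim-forking, since Kim-forking is witnessed by formulas) this yields $b_{j}\ind^{K}_{M}a_{<nj}$; but $a_{<nj}$ is exactly the tuple $b_{<j}$, so $b_{j}\ind^{K}_{M}b_{<j}$. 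Hence $J_{n}$ is an $\ind^{K}$-Morley sequence over $M$.

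Now I would apply Corollary \ref{witnesschar} to conclude that $J_{n}$ is a witness to Kim-dividing over $M$. As $n$ was arbitrary, $I$ is a strong witness to Kim-dividing over $M$, and Fact \ref{witnessfacts}(2) delivers that $I$ is a tree Morley sequence over $M$, completing the proof. There is really no technical obstacle here: the entire substance of the corollary is channeled through Corollary \ref{witnesschar}, and in turn through the main new result Theorem \ref{witnessing theorem}. The point of stating this corollary is precisely to record that collapsing two notions of generic sequence, resolving \cite[Question 7.12]{kaplan2017kim}, is an essentially free byproduct of the witnessing theorem combined with the previously known strong-witness characterization of tree Morley sequences.
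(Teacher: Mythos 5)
Your proposal is correct and follows essentially the same route as the paper: forward direction via Fact \ref{sequence implications}(1), and for the converse, reducing via Fact \ref{witnessfacts}(2) to showing the sequence is a strong witness, then observing that each grouped sequence is $\ind^{K}$-Morley by the total $\ind^{K}$-Morley hypothesis and hence a witness by the witnessing theorem. The only cosmetic difference is that you route the last step through Corollary \ref{witnesschar} rather than citing Theorem \ref{witnessing theorem} directly.
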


\begin{proof}
By Fact \ref{sequence implications}(1), if $I$ is tree Morley over $M$ then $I$ is a total Morley sequence over $M$.  For the other direction, suppose $I = \langle a_{i} : i < \omega \rangle$ is a total $\ind^{K}$-Morley sequence over $M$ and we will show it is tree Morley over $M$.  By Fact \ref{witnessfacts}, it suffices to show $I$ is a strong witness to Kim-dividing over $M$.  Because $a_{>i} \ind^{K}_{M} a_{\leq i}$ for all $i < \omega$, if $1 \leq n < \omega$ we know $\langle (a_{n \cdot i}, a_{n \cdot i + 1}, \ldots, a_{n \cdot i + (n-1)} \rangle$ satisfies 
$$
(a_{n \cdot i}, a_{n \cdot i + 1}, \ldots, a_{n \cdot i + (n-1)})  \ind^{K}_{M} (a_{n \cdot j}, a_{n \cdot j + 1}, \ldots, a_{n \cdot j + (n-1)})_{j < i},
$$
for all $i < \omega$, or, in other words, $\langle (a_{n \cdot i}, a_{n \cdot i + 1}, \ldots, a_{n \cdot i + (n-1)} ) : i < \omega \rangle$ is an $\ind^{K}$-Morley sequence over $M$, hence a witness to Kim-dividing over $M$ by Theorem \ref{witnessing theorem}.  It follows that $I$ is a strong witness to Kim-dividing, so $I$ is tree Morley over $M$.  
\end{proof}

\section{Applications} \label{applications}

\subsection{Lifting lemmas}

The first application of the transitivity and witnessing theorems will be two `lifting lemmas' that concern $\ind^{K}$-Morley and tree Morley sequences over two bases simultaneously.  In Lemma \ref{kms over M tms over N}, we showed that if $M \prec N$ and $a \ind^{K}_{M} N$, then it is possible to construct an $\ind^{K}$-Morley sequence over $M$ beginning with $a$ which is also a tree Morley sequence over $N$.  Later, we showed under the same hypotheses in Proposition \ref{goodseq}, that we can construct a tree Morley sequence over $M$ starting with $a$ which is also an $\ind^{K}$-Morley sequence over $N$.  These raise two natural questions:  first, is it possible, under these hypotheses, to construct sequences that are tree-Morley over both bases simultaneously?  And if so, are such sequences somehow special?  We show that the answer to the first question is yes, and, moreover, address the second by showing that every $\ind^{K}$-Morley sequence (tree-Morley sequence) over $M$ beginning with $a$ is conjugate over $Ma$ to a sequence that is $\ind^{K}$-Morley (tree Morley) over $N$.  

\begin{defn}
We say that $(a_{\eta})_{\eta \in \mathcal{T}_{\alpha}}$ is $\ind^{K}$\emph{-spread out} over $M$ if for all $\eta \in \mathcal{T}_{\alpha}$ with $\text{dom}(\eta) =[\beta+1,\alpha)$ for some $\beta < \alpha$, the sequence $(a_{\unrhd \eta \frown \langle i \rangle})_{i < \omega}$ is an $\ind^{K}$-Morley sequence over $M$. 
\end{defn}

\begin{lem} \label{indkERarg}
Suppose $(a_{\eta})_{\eta \in \mathcal{T}_{\kappa}}$ is a tree of tuples, $\ind^{K}$-spread out and $s$-indiscernible over $M$.  If $\kappa$ is sufficiently large, then there is a tree $(b_{\eta})_{\eta \in \mathcal{T}_{\omega}}$, $s$-indiscernible and $\ind^{K}$-spread out over $M$, such that:
\begin{enumerate}
\item For all $w \in [\omega]^{<\omega}$, 
$$
(a_{\eta})_{\eta \in \mathcal{T}_{\kappa} \upharpoonright v} \equiv_{M} (b_{\eta})_{\eta \in \mathcal{T}_{\omega} \upharpoonright w}.  
$$
for some $v \in [\kappa \setminus \text{lim}(\kappa)]^{<\omega}$.
\item For all $w,v \in [\omega]^{<\omega}$ with $|w| = |v|$,
$$
(b_{\eta})_{\eta \in \mathcal{T}_{\omega} \upharpoonright w} \equiv_{M} (b_{\eta})_{\eta \in \mathcal{T}_{\omega} \upharpoonright v}.
$$
\end{enumerate}
\end{lem}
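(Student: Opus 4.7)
The plan is to follow the template of Fact \ref{modeling}(2), replacing ``spread out'' with ``$\ind^{K}$-spread out'' throughout, and using the witness characterization of $\ind^{K}$-Morley sequences (Corollary \ref{witnesschar}, resting on Theorem \ref{witnessing theorem}) to transfer the latter property across local basing.

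The combinatorial setup mirrors Fact \ref{modeling}(2). I would choose $\kappa$ large enough that an Erd\H{o}s-Rado argument on $\mathcal{T}_{\kappa}$ yields a subset $W \subseteq \kappa \setminus \text{lim}(\kappa)$ of order type $\omega$ along which the type $\text{tp}((a_{\eta})_{\eta \in \mathcal{T}_{\kappa} \upharpoonright w}/M)$ depends only on $|w|$ for $w \in [W]^{<\omega}$. Applying the modeling property (Fact \ref{modeling}(1)) to the restricted tree then produces an $s$-indiscernible tree $(b_{\eta})_{\eta \in \mathcal{T}_{\omega}}$ over $M$ locally based on $(a_{\eta})_{\eta \in \mathcal{T}_{\kappa} \upharpoonright W}$ over $M$. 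Property (1) of the conclusion is immediate from local basing, and property (2) is inherited from the Erd\H{o}s-Rado pre-processing.

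The substantive content is verifying that $(b_{\eta})$ is $\ind^{K}$-spread out. Fix $\eta \in \mathcal{T}_{\omega}$ with $\text{dom}(\eta) = [\beta+1,\omega)$; I aim to show that $(b_{\unrhd \eta \frown \langle i \rangle})_{i < \omega}$ is $\ind^{K}$-Morley over $M$. The $M$-indiscernibility follows from $s$-indiscernibility, so by Corollary \ref{witnesschar} it suffices to show this sequence is a witness for Kim-dividing over $M$. Suppose toward contradiction that some $\varphi(x;b_{I_{0}})$ Kim-divides over $M$ (with $I_{0}$ a finite tuple of nodes above $\eta \frown \langle 0 \rangle$) while $\{\varphi(x;b_{I_{i}}) : i < \omega\}$ is consistent, where each $I_{i}$ denotes the analogous finite tuple above $\eta \frown \langle i \rangle$. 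The crucial transfer observation is that, combining local basing with $s$-indiscernibility of $(a_{\eta})$, one has $\text{tp}(b_{I_{0}}/M) = \text{tp}(a_{s_{0}}/M)$ for any $s_{0}$ in $\mathcal{T}_{\kappa}$ having the same quantifier-free $L_{s}$-type as $I_{0}$: local basing produces each formula in $\text{tp}(b_{I_{0}}/M)$ on some such $a_{s_{0}'}$, hence on all such $a_{s_{0}}$ by $s$-indiscernibility of $(a_{\eta})$, and both types are complete. Picking $s_{0}$ above some $\eta' \frown \langle 0 \rangle$ in $\mathcal{T}_{\kappa}$ and using invariance of Kim-dividing under $\equiv_{M}$, the formula $\varphi(x;a_{s_{0}})$ Kim-divides over $M$. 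By hypothesis $(a_{\unrhd \eta' \frown \langle i \rangle})_{i < \omega}$ is $\ind^{K}$-Morley over $M$, so by Theorem \ref{witnessing theorem} it witnesses Kim-dividing, making $\{\varphi(x; a_{s_{i}}) : i < \omega\}$ inconsistent (with $s_{i}$ the image of $s_{0}$ above $\eta' \frown \langle i \rangle$ under $s$-indiscernibility); but local basing applied with the formula $(\exists x)\bigwedge_{i < n}\varphi(x;y_{i})$, together with $s$-indiscernibility of $(a_{\eta})$, transfers consistency of each finite initial segment from the $b$-side to the $a$-side, a contradiction.

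The main obstacle is the bookkeeping: simultaneously managing local basing, $s$-indiscernibility of both trees, and the witness property, so that the Kim-dividing information on the $b$-side can be traced rigorously to the $a$-side and confronted with the witnessing theorem there. The complete-type-equality observation above is the key link that makes this transfer clean and avoids needing to reason about Kim-dividing as a first-order property.
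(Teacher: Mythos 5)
Your proof is correct, and the construction itself (Erd\H{o}s--Rado on the levels of $\mathcal{T}_{\kappa}$ followed by the modeling property, with (1) coming from local basing and (2) from the pre-processing) is exactly what the paper does by citing the proof of \cite[Lemma 5.10]{kaplan2017kim}. Where you diverge is in the verification that $(b_{\eta})_{\eta \in \mathcal{T}_{\omega}}$ is $\ind^{K}$-spread out. The paper's argument for this step is much softer: property (1), together with $s$-indiscernibility of both trees, gives that every finite subtuple of $(b_{\unrhd \eta \frown \langle i \rangle})_{i<n}$ realizes the same type over $M$ as the corresponding configuration in the $a$-tree (your ``complete-type-equality observation''), and since $a' \ind^{K}_{M} a''$ is determined by the $M$-types of finite subtuples\textemdash by the definition of Kim-forking via formulas and its invariance under $\mathrm{Aut}(\mathbb{M}/M)$\textemdash the $\ind^{K}$-Morley property of the branching sequences transfers directly. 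No appeal to Theorem \ref{witnessing theorem} or Corollary \ref{witnesschar} is needed, and in particular the paper's argument does not use NSOP$_{1}$ at all. Your route through the witnessing theorem is legitimate and non-circular (that theorem is proved before this lemma and does not depend on it), and it has the mild virtue of reducing everything to a single named black box; but it invokes a strictly stronger tool than necessary and tacitly adds the hypothesis that $T$ is NSOP$_{1}$, which is harmless in context but not required for the statement as proved in the paper. The direct transfer also spares you the bookkeeping you flag as the main obstacle, since one never has to chase a specific Kim-dividing formula across the local basing\textemdash only complete types of finite restrictions.
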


\begin{proof}
The proof of \cite[Lemma 5.10]{kaplan2017kim} (Fact \ref{modeling}(2)) shows that there is $(b_{\eta})_{\eta \in \mathcal{T}_{\omega}}$ satisfying (1) and (2).  As $(a_{\eta})_{\eta \in \mathcal{T}_{\kappa}}$ is $s$-indiscernible and $\ind^{K}$-spread out over $M$, (1) implies that $(b_{\eta})_{\eta \in \mathcal{T}_{\omega}}$ is $s$-indiscernible and $\ind^{K}$-spread out over $M$ as well.  See the proof of \cite[Lemma 5.10]{kaplan2017kim} (Fact \ref{modeling}(2)) for more details.  
\end{proof}

\begin{lem} \label{weak tree}
Suppose $M$ is a model and $(a_{\eta})_{\eta \in \mathcal{T}_{\alpha}}$ is a tree which is $\ind^{K}$-spread out and $s$-indiscernible over $M$ and for all $w,v \in [\alpha \setminus \text{lim}(\alpha)]^{<\omega}$ with $|w| = |v|$,
$$
(a_{\eta})_{\eta \in \mathcal{T}_{\alpha} \upharpoonright w} \equiv_{M} (a_{\eta})_{\eta \in \mathcal{T}_{\alpha} \upharpoonright v}
$$
then $(a_{\zeta_{\beta}})_{\beta \in \alpha \setminus \text{lim}(\alpha)}$ is a tree Morley sequence over $M$.
\end{lem}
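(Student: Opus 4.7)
The plan is to show that $(a_{\zeta_{\beta}})_{\beta \in \alpha \setminus \text{lim}(\alpha)}$ is a total $\ind^{K}$-Morley sequence over $M$ and invoke Corollary \ref{tms = tree} to conclude it is tree Morley.  The $M$-indiscernibility part is immediate from the level-indiscernibility hypothesis:  given strictly increasing tuples $\beta_{1} < \cdots < \beta_{n}$ and $\gamma_{1} < \cdots < \gamma_{n}$ from $\alpha \setminus \text{lim}(\alpha)$, setting $w = \{\beta_{i}\}$ and $v = \{\gamma_{i}\}$, each $\zeta_{\beta_{i}}$ lies in $\mathcal{T}_{\alpha} \upharpoonright w$; the canonical level-renaming correspondence between $\mathcal{T}_{\alpha} \upharpoonright w$ and $\mathcal{T}_{\alpha} \upharpoonright v$ matches these with the $\zeta_{\gamma_{i}}$, so the hypothesis yields $a_{\zeta_{\beta_{1}}} \ldots a_{\zeta_{\beta_{n}}} \equiv_{M} a_{\zeta_{\gamma_{1}}} \ldots a_{\zeta_{\gamma_{n}}}$.

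For the Kim-independence of tails, fix $\beta_{0} \in \alpha \setminus \text{lim}(\alpha)$ with $\beta_{0}+1 < \alpha$.  By $\ind^{K}$-spread out applied at $\zeta_{\beta_{0}+1}$, the sequence $I_{\beta_{0}} := (a_{\unrhd \zeta_{\beta_{0}+1} \frown \langle j \rangle})_{j < \omega}$ is $\ind^{K}$-Morley over $M$ and its first term $a_{\unrhd \zeta_{\beta_{0}}}$ contains $a_{\zeta_{\leq \beta_{0}}}$ as a subtuple, since $\zeta_{\beta_{0}} \unlhd \zeta_{\gamma}$ for all $\gamma \leq \beta_{0}$.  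The technical heart of the argument is upgrading $\ind^{K}$-Morleyness to indiscernibility of $I_{\beta_{0}}$ over $M a_{\zeta_{\geq \beta_{0}+1}}$: for increasing $j_{1} < \cdots < j_{n}$ and $j_{1}' < \cdots < j_{n}'$ in $\omega$, the map that fixes every index not above $\zeta_{\beta_{0}+1}$ and carries each element of $\unrhd \zeta_{\beta_{0}+1} \frown \langle j_{i} \rangle$ to the corresponding element of $\unrhd \zeta_{\beta_{0}+1} \frown \langle j_{i}' \rangle$ by swapping only its level-$\beta_{0}$ label preserves the quantifier-free $L_{s,\alpha}$-type, since lex comparisons between different subtrees at the split level reduce to the order of the $j_{i}$'s, and comparisons against the fixed part are decided strictly above level $\beta_{0}$.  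By $s$-indiscernibility this map extends to an $M$-automorphism, giving the desired $M a_{\zeta_{\geq \beta_{0}+1}}$-indiscernibility of $I_{\beta_{0}}$.

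Finally, the witnessing theorem (Theorem \ref{witnessing theorem}) closes the argument:  any $\varphi(y; a_{\unrhd \zeta_{\beta_{0}}}) \in \text{tp}(a_{\zeta_{\geq \beta_{0}+1}} / M a_{\unrhd \zeta_{\beta_{0}}})$ that Kim-divided over $M$ would, with $I_{\beta_{0}}$ as witness, render $\{\varphi(y; a_{\unrhd \zeta_{\beta_{0}+1} \frown \langle j \rangle}) : j < \omega\}$ inconsistent; yet every instance is realized by $a_{\zeta_{\geq \beta_{0}+1}}$ via the indiscernibility just established, a contradiction.  Hence $a_{\zeta_{\geq \beta_{0}+1}} \ind^{K}_{M} a_{\unrhd \zeta_{\beta_{0}}}$, and monotonicity and symmetry of Kim-independence yield $a_{\zeta_{> \beta_{0}}} \ind^{K}_{M} a_{\zeta_{\leq \beta_{0}}}$.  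This verifies total $\ind^{K}$-Morleyness, and Corollary \ref{tms = tree} concludes.  The main obstacle is the $s$-indiscernibility verification in the middle paragraph, where one has to check that the reindexing of level-$\beta_{0}$ labels really preserves the $L_{s,\alpha}$-qftp in its interaction with the fixed part of the tree.
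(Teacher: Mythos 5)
Your proof is correct and follows essentially the same route as the paper's: reduce to showing the path is a total $\ind^{K}$-Morley sequence via Corollary \ref{tms = tree}, then for each non-limit $\beta$ use the $\ind^{K}$-spread-out sequence $(a_{\unrhd \zeta_{\beta+1}\frown\langle j\rangle})_{j<\omega}$, which is indiscernible over $Ma_{\zeta_{>\beta}}$ by $s$-indiscernibility and whose first term contains $a_{\zeta_{\leq\beta}}$, and apply Theorem \ref{witnessing theorem}. The only difference is that you spell out the quantifier-free-type-preservation behind the $Ma_{\zeta_{>\beta}}$-indiscernibility, which the paper simply asserts.
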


\begin{proof}
The condition that for all $w,v \in [\alpha \setminus \text{lim}(\alpha)]^{<\omega}$ with $|w| = |v|$,
$$
(a_{\eta})_{\eta \in \mathcal{T}_{\alpha} \upharpoonright w} \equiv_{M} (a_{\eta})_{\eta \in \mathcal{T}_{\alpha} \upharpoonright v}
$$
implies that $(a_{\zeta_{\beta}})_{\beta \in \alpha \setminus \text{lim}(\alpha)}$ is an $M$-indiscernible sequence.  By Corollary \ref{tms = tree}, it suffices to show that $(a_{\zeta_{\beta}})_{\beta \in \alpha \setminus \text{lim}(\alpha)}$ is a total $\ind^{K}$-Morley sequence over $M$.  Fix any non-limit $\beta < \alpha$.  We know that $a_{\zeta_{\leq  \beta}}$ is a subtuple of $a_{\unrhd \zeta_{\beta}} = a_{\vartriangleright \zeta_{\beta+1\frown 0}}$ and $\langle a_{\vartriangleright \zeta_{\beta+1\frown \langle i \rangle}} : i < \omega \rangle$ is an $\ind^{K}$-Morley sequence over $M$ which is $Ma_{\zeta_{>\beta}}$-indiscernible so $a_{\zeta_{>\beta}} \ind^{K}_{M} a_{\zeta_{\leq \beta}}$ by Theorem \ref{witnessing theorem}.  
\end{proof}

\begin{prop} \label{tree lifting lemma}
Suppose $T$ is NSOP$_{1}$, $M \prec N \models T$, and $I = \langle b_{i} : i < \omega \rangle$ is a tree Morley sequence over $M$.  If $b_{0} \ind^{K}_{M} N$, then there is $I' \equiv_{Mb_{0}} I$ such that $I'$ is a tree Morley sequence over $N$.
\end{prop}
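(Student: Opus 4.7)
The plan is to adapt the tree construction of Proposition \ref{goodseq} to produce a tree that is $\ind^{K}$-spread out over $N$ (rather than spread out over $M$), and whose all-zeros path realizes the $Mb_{0}$-type of $I$. Specifically, I would construct by transfinite induction trees $(a^{\alpha}_{\eta})_{\eta \in \mathcal{T}_{\alpha}}$ satisfying: (1) $s$-indiscernibility and $\ind^{K}$-spread-outness over $N$; (2) $(a^{\alpha}_{\eta})_{\eta} \ind^{K}_{M} N$; (3) compatibility under the canonical inclusions $\iota_{\alpha\beta}$, with $a^{0}_{\emptyset} = b_{0}$, so that the bottom of the all-zeros path is always $b_{0}$; (4) each root $a^{\alpha}_{\emptyset}$ satisfies $a^{\alpha}_{\emptyset} \equiv_{N} b_{0}$, so that every element of the all-zeros path has $N$-type $\text{tp}(b_{0}/N)$; and (5) the all-zeros path agrees with an initial segment of $I$ in type over $Mb_{0}$. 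Limit stages are direct limits, handled by finite character.

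The successor step from $\alpha$ to $\alpha+1$ proceeds as follows: using Lemma \ref{weirdextension} iteratively together with Ramsey-style extraction, build an $N$-indiscernible sequence $\overline{c} = \langle (a^{\alpha}_{\eta,i})_{\eta} : i < \omega \rangle$ of copies of $(a^{\alpha}_{\eta})$ starting with $(a^{\alpha}_{\eta})$ that is $\ind^{K}$-Morley over $N$ and satisfies $\overline{c} \ind^{K}_{M} N$. Then choose a new root $b^{*}$ realizing simultaneously: the type over $Mb_{0}$ extending the all-zeros path to the next element of $I$ (transported via the $Mb_{0}$-automorphism identifying the previous path with $(b_{0},\ldots,b_{\alpha})$); $b^{*} \equiv_{N} b_{0}$; $b^{*} \ind^{K}_{N} \overline{c}$; and $b^{*}\overline{c} \ind^{K}_{M} N$. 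Extract the $s$-indiscernible tree $(a^{\alpha+1}_{\eta})$ via the modeling property of Fact \ref{modeling}(1), pinning $b^{*}$ as the new root, with the remaining properties over $N$ and over $M$ surviving by finite character.

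After reaching a sufficiently large $\kappa$, applying Lemma \ref{indkERarg} with base $N$ yields $(b_{\eta})_{\eta \in \mathcal{T}_{\omega}}$ with the Morley tree property over $N$. Condition (4) combined with $s$-indiscernibility of the old tree implies every level-$j$ node has $N$-type $\text{tp}(b_{0}/N)$, so by Lemma \ref{indkERarg}(1), $b_{\zeta_{0}} \equiv_{N} b_{0}$; an $N$-automorphism places $b_{0}$ at this position, preserving all $N$-structure. Then Lemma \ref{weak tree} shows the all-zeros path is tree Morley over $N$, and combining the modeling equivalence with (5) yields that this path has the same $M$-type as $I$; since both begin with $b_{0}$, this upgrades to an $Mb_{0}$-type match, giving the desired $I'$.

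The main obstacle is the successor-stage choice of $b^{*}$: the strengthened independence theorem (Fact \ref{fact:Kim Morley is consistent}(3)) naturally produces an element realizing both type conditions with $\ind^{K}_{M}$-independence, while Lemma \ref{weirdextension} provides the passage from $\ind^{K}_{M}$- to $\ind^{K}_{N}$-independence, but at the cost of only preserving $N$-types, potentially destroying the $Mb_{0}$-type condition. A hybrid argument—adapting the iterated construction in the proof of Lemma \ref{weirdextension}'s Claim 1 to track an additional type over a set $\ind^{K}_{M}$-independent from $N$—should circumvent this difficulty.
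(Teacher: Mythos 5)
Your outline correctly isolates the crux, but the proof has a genuine gap exactly where you flag it: the successor-stage choice of the new root $b^{*}$. You need $b^{*}$ to simultaneously (i) realize a prescribed type over $Mb_{0}$ together with the already-built all-zeros path (so as to continue matching $I$), (ii) satisfy $b^{*} \ind^{K}_{N} \overline{c}$, and (iii) satisfy $b^{*}\overline{c} \ind^{K}_{M} N$. As you note, Lemma \ref{weirdextension} only controls $\mathrm{tp}(b^{*}/N)$ and will in general destroy the type over $Mb_{0}\overline{c}$, while the strengthened independence theorem over $M$ does not by itself deliver the $\ind^{K}_{N}$-independence. The ``hybrid argument'' you gesture at is not supplied, and it is not routine: redoing the $\Gamma/\Delta$ construction of Lemma \ref{weirdextension} with the extra type condition over the tree requires amalgamating a type over a set that is neither contained in $N$ nor independent from it in the right configuration. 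This is the hard point of the proposition, and the proposal does not close it.

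The paper circumvents this by never trying to prescribe the new root's type over the existing tree at all. Instead it carries the entire tail of $I$ through the induction as part of the data: at stage $\alpha$ one has both the tree $(b^{\alpha}_{\eta})_{\eta\in\mathcal{T}_{\alpha}}$ and a sequence $I_{\alpha}\equiv_{M} I$ whose initial segment is the all-zeros path and whose tail $I_{\alpha,>\alpha}$ remains indiscernible over the tree and satisfies $I_{\alpha}(b^{\alpha}_{\eta})_{\eta}\ind^{K}_{M}N$. The new root is then simply harvested from the tail ($c_{\emptyset}=b''_{\alpha,\alpha+1}$), which automatically has the correct type relative to every path. The independence theorem is applied to the \emph{model} $N$ itself\textemdash amalgamating $\mathrm{tp}(N/MJ)$, where $J$ is a sequence of copies of the tree obtained from Proposition \ref{goodseq} (tree Morley over $M$ and $\ind^{K}$-Morley over $N$, not merely the output of Lemma \ref{weirdextension}), with $\mathrm{tp}(N'/MI'_{\alpha,>\alpha})$\textemdash and the result is pulled back by an automorphism. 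The use of Proposition \ref{goodseq} rather than Lemma \ref{weirdextension} is also needed so that $J$ is a tree Morley sequence over $M$, which is what makes the chain condition and Kim's lemma arguments with the tail go through. Your final extraction step is essentially right, but the invariant you would need to verify it (every branch of the $\kappa$-tree, concatenated with the tail, realizes $\mathrm{tp}(I/M)$) is precisely what the tail-carrying construction is designed to maintain.
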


\begin{proof}
By compactness, we may stretch the sequence so that $I = \langle b_{i} : i \in \kappa \setminus \text{lim}(\kappa) \rangle$ for some cardinal $\kappa$ large relative to $|N|$.  By the chain condition, Lemma \ref{chain condition for indk}, we may also assume $I$ is $N$-indiscernible and $I \ind^{K}_{M} N$ after moving by an automorphism over $Mb_{0}$.  By induction on $\alpha \leq \kappa$, we will construct trees $(b^{\alpha}_{\eta})_{\eta \in \mathcal{T}_{\alpha}}$ and sequences $I_{\alpha} = \langle b_{\alpha,i} : i \in \kappa \setminus \text{lim}(\kappa)\rangle$ satisfying the following conditions for all $\alpha$: 
\begin{enumerate}
\item For all non-limit $i \leq \alpha$, $b^{\alpha}_{\zeta_{i}} = b_{\alpha,i}$.
\item $(b^{\alpha}_{\eta})_{\eta \in \mathcal{T}_{\alpha}}$ is $\ind^{K}$-spread out over $N$ and $s$-indiscernible over $NI_{\alpha,>\alpha}$.  
\item If $\beta < \alpha$, $I_{\alpha} \equiv_{M} I_{\beta}$ and $I_{0} = I$.  
\item $I_{\alpha,>\alpha}$ is $M(b^{\alpha}_{\eta})_{\eta \in \mathcal{T}_{\alpha}}$-indiscernible.
\item If $\alpha < \beta$, then $b^{\alpha}_{\eta} = b^{\beta}_{\iota_{\alpha \beta}(\eta)}$ for $\eta \in \mathcal{T}_{\alpha}$.  
\item $I_{\alpha}(b^{\alpha}_{\eta})_{\eta \in \mathcal{T}_{\alpha}} \ind^{K}_{M} N$.  
\item $b^{\alpha}_{\eta} \equiv_{N} b_{\alpha,i} \equiv_{N} b_{0}$ for all $\eta \in \mathcal{T}_{\alpha}$, $i \in \kappa \setminus \text{lim}(\kappa)$.  
\end{enumerate}
For the base case, we define $b^{0}_{\emptyset} = b_{0}$ and $I_{0} = I$, which satisfies all the demands.  Next, suppose we are given $(b^{\beta}_{\eta})_{\eta \in \mathcal{T}_{\beta}}$ for all $\beta \leq \alpha$ and we will construct $(b^{\alpha+1}_{\eta})_{\eta \in \mathcal{T}_{\alpha+1}}$.  By (6) and Lemma \ref{goodseq}, we may obtain a sequence $J = \langle (b^{\alpha}_{\eta,i})_{\eta \in \mathcal{T}_{\alpha}} : i < \omega \rangle$ with $(b^{\alpha}_{\eta,0})_{\eta \in \mathcal{T}_{\alpha}} = (b^{\alpha}_{\eta})_{\eta \in \mathcal{T}_{\alpha}}$ which is tree-Morley over $M$ and $\ind^{K}$-Morley over $N$.  As $J$ is a tree Morley sequence over $M$ which is $N$-indiscernible, we have:
\begin{equation} \tag{a}
J \ind^{K}_{M} N,
\end{equation} 
Likewise, as $I_{\alpha, > \alpha}$ is a tree Morley sequence over $M$ by (3) which is $M(b^{\alpha}_{\eta})_{\eta \in \mathcal{T}_{\alpha}}$-indiscernible by (4), we have $I_{\alpha,>\alpha} \ind^{K}_{M} (b^{\alpha}_{\eta})_{\eta \in \mathcal{T}_{\alpha}}$.  By the chain condition (Lemma \ref{chain condition for indk}), there is $I'_{\alpha,>\alpha} \equiv_{M(b^{\alpha}_{\eta})_{\eta \in \mathcal{T}_{\alpha}}} I_{\alpha, > \alpha}$ so that $J$ is $MI'_{\alpha,>\alpha}$-indiscernible and also:
\begin{equation}\tag{b}
I'_{\alpha,>\alpha} \ind^{K}_{M} J.
\end{equation}  
Choose $N'$ so that $NI_{\alpha,>\alpha} \equiv_{M} N'I'_{\alpha,>\alpha}$.  By (6) and invariance, we have
\begin{equation} \tag{c}
N' \ind^{K}_{M} I'_{\alpha,>\alpha}.
\end{equation} 
By (a), (b), and (c), we may apply the independence theorem to find a model $N''$ with $N'' \equiv_{MJ} N$, $N'' \equiv_{MI'_{\alpha,>\alpha}} N'$, and $N'' \ind^{K}_{M} I'_{\alpha,>\alpha}J$.  Now choose $I''_{\alpha,>\alpha} = \langle b''_{\alpha,i} : i \in \kappa \setminus (\text{lim}(\kappa) \cup \alpha) \rangle$ so that $N I''_{\alpha,>\alpha}\equiv_{MJ} N'' I'_{\alpha,>\alpha}$.  

Define a tree $(c_{\eta})_{\eta \in \mathcal{T}_{\alpha+1}}$ by setting $c_{\emptyset} = b''_{\alpha, \alpha+1}$ and $c_{\langle i \rangle \frown \eta} = b^{\alpha}_{\eta,i}$ for all $\eta \in \mathcal{T}_{\alpha}$ and $i < \omega$.  With this definition, we have $N \ind^{K}_{M} (c_{\eta})_{\eta \in \mathcal{T}_{\alpha+1}} I''_{\alpha,>\alpha+1}$.  Let $(c'_{\eta})_{\eta \in \mathcal{T}_{\alpha+1}}$ be a tree which is $s$-indiscernible over $NI''_{\alpha,> \alpha+1}$ locally based on $(c_{\eta})_{\eta \in \mathcal{T}_{\alpha}}$.  By symmetry and finite character, we have $N \ind^{K}_{M} (c'_{\eta})_{\eta \in \mathcal{T}_{\alpha+1}}I''_{\alpha,>\alpha+1}$.  Finally, let $I'''_{\alpha,>\alpha+1} = \langle b'''_{\alpha,i} : i \in \kappa \setminus (\text{lim}(\kappa) \cup (\alpha+1)) \rangle$ be an $N(c'_{\eta})_{\eta \in \mathcal{T}_{\alpha+1}}$-indiscernible sequence locally based on $I''_{\alpha,>\alpha+1}$.  By symmetry, we have $N \ind^{K}_{M} I'''_{\alpha,>\alpha}(c'_{\eta})_{\eta \in \mathcal{T}_{\alpha+1}}$.  Note that, by (2) and the construction, $(c'_{\eta})_{\eta \in \mathcal{T}_{\alpha+1}}$ is $\ind^{K}$-spread out over $N$ and $s$-indiscernible over $NI'''_{\alpha,>\alpha+1}$.  Moreover, by (2) and the construction, there is an automorphism $\sigma \in \text{Aut}(\mathbb{M}/N)$ such that $\sigma(c'_{\langle 0 \rangle \frown \eta}) = b^{\alpha}_{\eta}$ for all $\eta \in \mathcal{T}_{\alpha}$ so we will define $b^{\alpha+1}_{\eta} = \sigma(c'_{\eta})$ for all $\eta \in \mathcal{T}_{\alpha+1}$.  Likewise, we define $I_{\alpha+1} = \langle b_{\alpha+1,i} : i \in \kappa \setminus \text{lim}(\kappa)\rangle$ by $b_{\alpha+1,i} = b^{\alpha+1}_{\zeta_{i}}$ for non-limit $i \leq \alpha+1$ and $b_{\alpha+1,i} = \sigma(b'''_{\alpha,i})$ for non-limit $i > \alpha+1$.  It is immediate that this construction satisfies (6) and (7) by induction and the construction of $N''$.  To check (3), note that, by induction, using (1),(2), and (3), for any function $\eta : \alpha \to \omega$, we have $(b^{\alpha}_{\eta |_{[\beta,\alpha)}})_{\beta \in \alpha \setminus \text{lim}(\alpha)} I_{\alpha, > \alpha} \equiv_{M} I$, and therefore, for any $i < \omega$, we have 
$$
(b^{\alpha}_{\eta|_{[\beta,\alpha)},i})_{\beta \in \alpha \setminus \text{lim}(\alpha)} I''_{\alpha,>\alpha} \equiv_{M} (b^{\alpha}_{\eta|_{[\beta,\alpha)},i})_{\beta \in \alpha \setminus \text{lim}(\alpha)} I'_{\alpha,>\alpha} \equiv_{M} I.
$$
By the definition of $(c_{\eta})_{\eta \in T_{\alpha+1}}$ and $s$-indiscernibility over $M$, it follows that, for any function $\eta' : (\alpha+1) \to \omega$,
$$
(c'_{\eta'|_{[\beta,\alpha+1)}})_{\beta \in (\alpha+1) \setminus \text{lim}(\alpha+1)}I'''_{\alpha,>\alpha+1} \equiv_{M} (c_{\eta'|_{[\beta,\alpha+1)}})_{\beta \in (\alpha+1) \setminus \text{lim}(\alpha+1)}I''_{\alpha,>\alpha+1} \equiv_{M} I,
$$
from which (3) follows.  
The remaining constraints are easily seen to be satisfied by the construction.

Now for $\delta$ limit, if we are given $(b^{\alpha}_{\eta})_{\eta \in \mathcal{T}_{\alpha}}$ for $\alpha < \delta$, we may define $b^{\delta}_{\iota_{\alpha \delta}(\eta)} = b^{\alpha}_{\eta}$ for all $\alpha < \delta$ and $\eta \in \mathcal{T}_{\alpha}$.    We define $I_{\delta}$ as follows:  $I_{\delta,<\delta}$ will be defined by $b_{\delta,i} = b_{i,i}$ for all non-limit $i < \delta$.  By (1),(3), and induction, we have $I_{\delta,<\delta} \equiv_{M} I_{<\delta}$.  Choose $J$ so that $I_{\delta,<\delta}J \equiv_{M} I_{<\delta}I_{>\delta}$.  Write $\overline{x}$ for $\langle x_{i} : i \in \kappa \setminus (\text{lim}(\kappa) \cup \delta) \rangle$ and $\varphi(\overline{x};c,n)$ to denote any formula where the variables are a finite subtuple of $\overline{x}$.  By (6), induction, and compactness, the partial type, which contains $\text{tp}_{\overline{x}}(J/MI_{\delta,<\delta})$ and $\{\neg \varphi(\overline{x},c;n) : c \in (b^{\delta}_{\eta})_{\eta \in \mathcal{T}_{\delta}}, n \in N, \varphi(x,y;n) \text{ Kim-divides over }M\}$, and naturally expresses that both $(b^{\delta}_{\eta})_{\eta \in \mathcal{T}_{\delta}}$ is $s$-indiscernible over $N\overline{x}$ and $\overline{x}$ is $N(b^{\delta}_{\eta})_{\eta \in \mathcal{T}_{\alpha}}$-indiscernible, is consistent.  Let $I_{\delta,>\delta}$ is a realization of this type, completing the definition of $I_{\delta}$. It is easy to check that these are well-defined and satisfy all of the requirements by induction and the finite character of Kim-independence.  

This completes the recursion and yields $(b^{\kappa}_{\eta})_{\eta \in \mathcal{T}_{\kappa}}$ likewise defined by $b^{\kappa}_{\iota_{\alpha \kappa}(\eta)} = b^{\alpha}_{\eta}$ for all $\alpha < \kappa$ and $\eta \in \mathcal{T}_{\alpha}$.  Apply Lemma \ref{indkERarg} to obtain a tree $(c_{\eta})_{\eta \in \mathcal{T}_{\omega}}$ so that for all $w \in [\omega]^{<\omega}$, there is $v \in [\kappa \setminus \text{lim}(\kappa)]^{<\omega}$ such that
$$
(b^{\kappa}_{\eta})_{\eta \in \mathcal{T}_{\kappa} \upharpoonright v} \equiv_{N} (c_{\eta})_{\eta \in \mathcal{T}_{\omega} \upharpoonright w},
$$
and, moreover, for all $w,v \in [\omega]^{<\omega}$ with $|w| = |v|$,
$$
(c_{\eta})_{\eta \in \mathcal{T}_{\omega} \upharpoonright w} \equiv_{N} (c_{\eta})_{\eta \in \mathcal{T}_{\omega} \upharpoonright v}.
$$
By an automorphism, we can assume $c_{\zeta_{0}} = b_{0}$, hence, setting $I' = \langle c_{\zeta_{i}} : i < \omega \rangle$, we have $I' \equiv_{Mb_{0}} I$.  Moreover, by Lemma \ref{weak tree}, $I'$ is a tree Morley sequence over $N$, completing the proof.    
\end{proof}


The second lifting lemma, is an analogue of Proposition \ref{tree lifting lemma} for $\ind^{K}$-Morley sequences.  

\begin{prop}\label{lifting lemma}
Suppose $T$ is NSOP$_{1}$, $M \prec N \models T$, and $I = \langle b_{i} : i < \omega \rangle$ is an $\ind^{K}$-Morley sequence over $M$.  If $b_{0} \ind^{K}_{M} N$, then there is $I' \equiv_{Mb_{0}} I$ satisfying the following conditions:
\begin{enumerate}
\item $I' \ind^{K}_{M} N$
\item $I'$ is an $\ind^{K}$-Morley sequence over $N$.  
\end{enumerate}
\end{prop}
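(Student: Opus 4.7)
The plan is to mimic the structure of Proposition \ref{tree lifting lemma}, replacing the tree construction with a linear transfinite induction and the tree Morley extraction of Fact \ref{modeling}(2) with an ordinary Ramsey-style extraction of an indiscernible sequence. The key new ingredient is a strengthened version of Lemma \ref{weirdextension}: if $b \ind^{K}_{M} N$ and $c \ind^{K}_{M} Nb$, then there is $c' \equiv_{Mb} c$ (not merely $c' \equiv_{N} c$) with $bc' \ind^{K}_{M} N$ and $b \ind^{K}_{N} c'$. To prove this, I would follow the argument of Lemma \ref{weirdextension}, replacing $\text{tp}(c/N)$ with $\text{tp}(c/Mb)$ in the partial type $\Gamma$. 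The inductive step now requires realizing $\text{tp}(c/Mb_n)$ under the natural identification $b_n \leftrightarrow b_0$: using $N$-indiscernibility of the tree Morley sequence $\langle b_i \rangle$ over $N$ (produced as in Lemma \ref{kms over M tms over N}), pick $\sigma \in \text{Aut}(\mathbb{M}/N)$ with $\sigma(b_0) = b_n$ and set $c^{**} = \sigma(c)$, so that $c^{**}b_n \equiv_{N} cb_0$ and $c^{**} \ind^{K}_{M} Nb_n$ by invariance (here the strengthened hypothesis $c \ind^{K}_{M} Nb$ is essential). Applying the strengthened independence theorem (Fact \ref{fact:Kim Morley is consistent}(3)) with base augmentations $Nb_{<n}$ and $b_n$ then yields $c_{n+1} \models \text{tp}(c_n/MNb_{<n}) \cup \text{tp}(c^{**}/Mb_n)$ with $c_{n+1} \ind^{K}_{M} Nb_{\leq n}$, continuing the induction; consistency of the full partial type follows from Kim's lemma for tree Morley sequences exactly as in the original.

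With the strengthened lemma in hand, the main construction is a linear transfinite induction. Stretch $I$ to length $\kappa$ for $\kappa$ sufficiently large and build $\langle b'_\alpha : \alpha < \kappa \rangle$ satisfying $b'_{\leq \alpha} \equiv_{Mb_0} b_{\leq \alpha}$, $b'_{\leq \alpha} \ind^{K}_{M} N$, and $b'_\alpha \ind^{K}_{N} b'_{<\alpha}$ at every stage. At the successor step, using $b_\alpha \ind^{K}_{M} b_{<\alpha}$ and an automorphism over $Mb_0$, find $b^{(1)}_\alpha$ with $b^{(1)}_\alpha b'_{<\alpha} \equiv_{Mb_0} b_\alpha b_{<\alpha}$; apply extension (Fact \ref{fact:Kim Morley is consistent}(1)) to get $b^{(2)}_\alpha \equiv_{Mb'_{<\alpha}} b^{(1)}_\alpha$ with $b^{(2)}_\alpha \ind^{K}_{M} Nb'_{<\alpha}$; then apply the strengthened lemma with $b = b'_{<\alpha}$, $c = b^{(2)}_\alpha$ to produce $b'_\alpha$ satisfying all three requirements simultaneously. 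Limit stages are handled by finite character of Kim-independence.

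Finally, extract an $N$-indiscernible sequence $I' = \langle b''_i : i < \omega \rangle$ with $b''_0 = b_0$, locally based on $\langle b'_\alpha : 0 < \alpha < \kappa \rangle$ over $MNb_0$. Then $I' \equiv_{Mb_0} I$ follows from $M$-indiscernibility of $I$ combined with preservation of type over $Mb_0$ along the long sequence; $I' \ind^{K}_{M} N$ follows from finite character of $\ind^{K}$; and $b''_n \ind^{K}_{N} b''_{<n}$ for all $n$ follows from local basis over $N$ combined with the step-wise $b'_\alpha \ind^{K}_{N} b'_{<\alpha}$ and invariance, so $I'$ is $\ind^{K}$-Morley over $N$. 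The main obstacle is the strengthened Lemma \ref{weirdextension}: achieving $c' \equiv_{Mb} c$ (rather than the weaker $c' \equiv_{N} c$ of the original) requires both strengthening the hypothesis on $c$ and exploiting the $N$-indiscernibility of the witnessing tree Morley sequence at each inductive step to produce the auxiliary $c^{**}$ via an automorphism over $N$.
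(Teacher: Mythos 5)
Your strengthened version of Lemma \ref{weirdextension} looks provable by the method you describe (the auxiliary $c^{**}=\sigma(c)$ with $\sigma\in\text{Aut}(\mathbb{M}/N)$ does let the strengthened independence theorem realize the copy of $\text{tp}(c/Mb)$ at $b_n$, and the Kim's-lemma contradiction at the end is unaffected), and the transfinite successor step is coherent. The genuine gap is at the very end: your construction controls $\text{tp}(b'_{\alpha}/Mb'_{<\alpha})$ but never controls $\text{tp}(b'_{\alpha}/N)$. The original Lemma \ref{weirdextension} returns $c'\equiv_{N}c$; your strengthening trades that for $c'\equiv_{Mb}c$, and the element $b^{(2)}_{\alpha}$ you feed into it (obtained from plain extension) already has an arbitrary type over $N$. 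Consequently the $b'_{\alpha}$ need not all realize $\text{tp}(b_{0}/N)$, and the final extraction cannot deliver both conclusions at once. If you extract an $N$-indiscernible sequence $I''$ locally based on $\langle b'_{\alpha}\rangle$ over $N$, then $I''\equiv_{M}I$ but the requirement $I'\equiv_{Mb_{0}}I$ forces the first element of $I'$ to be $b_{0}$ itself; moving $I''$ to start at $b_{0}$ by an automorphism that preserves $N$-indiscernibility, $I'\ind^{K}_{M}N$ and $\ind^{K}_{N}$-Morleyness requires an automorphism over $N$, hence requires $b''_{0}\equiv_{N}b_{0}$, which nothing in your construction guarantees. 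If instead you prepend $b_{0}$ to a sequence extracted over $Nb_{0}$, the resulting sequence is not $N$-indiscernible (you would need $b_{0}\equiv_{N}c_{0}$), so it is not an $\ind^{K}$-Morley sequence over $N$. Either way one of the two conclusions is lost.

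This is exactly the point the paper's proof is organized around: the partial type $\Gamma_{K}$ explicitly includes $\bigcup_{i\leq K}p(x_{i};N)$ with $p=\text{tp}(b_{0}/N)$, and Claim 1 realizes this by amalgamating $\text{tp}(N/Mb'_{\leq K})$ with $\text{tp}(N'/Mb'_{K+1})$ (where $b'_{K+1}N'\equiv_{M}b_{0}N$) via the independence theorem, so that every new element realizes $\text{tp}(b_{0}/N)$ while the type over $M$ of the original sequence and the independence $\ind^{K}_{M}N$ are preserved. Your argument is repairable along the same lines: replace the plain extension step by an independence-theorem amalgamation producing $b^{(2)}_{\alpha}\models\text{tp}(b^{(1)}_{\alpha}/Mb'_{<\alpha})\cup\text{tp}(b_{0}/N)$ with $b^{(2)}_{\alpha}\ind^{K}_{M}Nb'_{<\alpha}$ (this uses $b'_{<\alpha}\ind^{K}_{M}N$, $b_{0}\ind^{K}_{M}N$ and $b^{(1)}_{\alpha}\ind^{K}_{M}b'_{<\alpha}$), and add $\text{tp}(c/N)$ back into the partial type of your strengthened lemma so that it returns $c'\models\text{tp}(c/N)\cup\text{tp}(c/Mb)$; the inductive step of that lemma accommodates this since $c_{n+1}\models\text{tp}(c_{n}/Nb_{<n})$ already propagates the type over $N$. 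Until this is done, the proof as written does not establish the proposition.
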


\begin{proof}
For $i < \omega$, let $q_{i}(x_{j} : j \leq i) = \text{tp}(b_{\leq i}/M)$ and let $p(x;N) = \text{tp}(b_{0}/N)$.  For a natural number $K$, define $\Gamma_{K}$ to be the partial type defined as the union of the following: 
\begin{enumerate}[(a)]
\item $q_{K}(x_{i} : i \leq K)$.
\item $\bigcup_{i \leq K} p(x_{i};N)$.
\item $\{\neg \varphi(x_{\leq K};c) : \varphi(x_{\leq K},y) \in L(M),c \in N,\varphi(x_{\leq K};c) \text{ Kim-divides over }M\}$.
\item $\{\neg \varphi(x_{<i};x_{i}) : i \leq K,\varphi(x_{<i};x_{i}) \in L(N), \varphi(x_{<i};b_{0}) \text{ Kim-divides over }N\}$.
\end{enumerate}
By Ramsey and compactness, it is enough to show the consistency of $\Gamma = \bigcup_{K < \omega} \Gamma_{K}$. 

As $b_{0} \ind^{K}_{M} N$, $\Gamma_{0}$ is consistent.  Suppose $\Gamma_{K}$ is consistent and we will that show $\Gamma_{K+1}$ is consistent.  Let $\Delta(x_{0},\ldots, x_{K+1}) \subseteq \Gamma_{K+1}$ be the partial type defined as the union of the following:
\begin{enumerate}
\item $q_{K+1}(x_{i} : i \leq K+1)$.
\item $\bigcup_{i \leq K+1} p(x_{i};N)$.
\item $\{\neg \varphi(x_{\leq K+1};c) : \varphi(x_{\leq K+1},y) \in L(M),c \in N,\varphi(x_{\leq K+1};c) \text{ Kim-divides over }M\} $.
\item $\{\neg \varphi(x_{<i};x_{i}) : i < K,\varphi(x_{<i};x_{i}) \in L(N), \varphi(x_{<i};b_{0}) \text{ Kim-divides over }N\} $.
\end{enumerate}
Note that $\Delta$ is identical to $\Gamma_{K+1}$ except that in the final set of formulas, $i$ is taken to be less than $K$ rather than $K+1$.  

\textbf{Claim 1}:  $\Delta$ is consistent.  

\emph{Proof of claim}:  Let $(b'_{0},\ldots, b'_{K}) \models \Gamma_{K}$ and choose $b'_{K+1}$ so that $b'_{\leq K+1} \equiv_{M} b_{\leq K+1}$.  Next, choose a model $N'$ so that $b'_{K+1}N' \equiv_{M} b_{0}N$.  Now by definition of $\Gamma_{K}$ and symmetry, we have $N \ind^{K}_{M} b'_{\leq K}$ and our assumption that $b_{0} \ind^{K}_{M} N$ implies $N' \ind^{K}_{M} b'_{K+1}$ by symmetry and invariance.  Moreover, because $I$ is an $\ind^{K}$-Morley sequence, we likewise have $b'_{K+1} \ind^{K}_{M} b'_{\leq K}$.  Therefore, we may apply the independence theorem to find $N'' \models \text{tp}(N/Mb'_{\leq K}) \cup \text{tp}(N'/Mb'_{K+1})$ such that $N'' \ind^{K}_{M} b'_{\leq K+1}$.  There is an automorphism $\sigma \in \text{Aut}(\mathbb{M}/Mb'_{\leq K})$ with $\sigma(N'') = N$.  Let $b''_{K+1} = \sigma(b'_{K+1})$.  Then $(b'_{0},\ldots, b'_{K},b''_{K+1}) \models \Delta$.\qed

\textbf{Claim 2}:  Suppose $J = \langle c_{K+1,i} : i < \omega \rangle$ is an $\ind^{K}$-Morley sequence over $M$ with $b_{0} = c_{K+1,0}$.  If $J$ is $N$-indiscernible and $J \ind^{K}_{M} N$, then $\bigcup_{i < \omega} \Delta(x_{0},\ldots, x_{K},c_{K+1,i})$ is consistent.  

\emph{Proof of claim}:  Choose $(c_{0},\ldots, c_{K})$ so that $(c_{0},\ldots, c_{K},c_{K+1,0}) \models \Delta$.  Then $c_{K+1,0} \ind^{K}_{M} c_{\leq K}$ so there is $J' \equiv_{Mc_{K+1,0}} J$ such that $J'$ is $Mc_{\leq K}$-indiscernible and $J' \ind^{K}_{M} c_{\leq K}$, by the chain condition for $\ind^{K}$-Morley sequences Lemma \ref{chain condition for indk}.  Moreover, by definition of $\Delta$, we have $c_{\leq K} \ind^{K}_{M} N$.  By assumption, $J \ind^{K}_{M} N$ and, since $J \equiv_{M} J'$, we may, therefore,  apply the strengthened independence theorem, Fact \ref{fact:Kim Morley is consistent}(3), to find $J_{*}$ that simultaneously realizes $\text{tp}(J'/Mc_{\leq K})$, to satisfy condition $(1)$ in the definition of $\Delta$, and $\text{tp}(J/N)$, to satisfy condition (2), and, moreover, such that $N \ind^{K}_{M} J_{*}c_{\leq K}$, to satisfy (3).   Choose $c'_{\leq K}$ so that $c'_{\leq K}J \equiv_{N} c_{\leq K}J_{*}$.  Then, by definition of $\Delta$, $c'_{\leq K} \models \bigcup_{i < \omega} \Delta(x_{0},\ldots, x_{K},c_{K+1,i})$.\qed

To conclude, we use Proposition \ref{tree lifting lemma} to select $J = \langle b'_{K+1,i} : i < \omega \rangle$ which is simultaneously a tree Morley sequence over $M$ and a tree Morley sequence over $N$ with $J \ind^{K}_{M} N$.  In particular, $J$ is an $\ind^{K}$-Morley sequence over $M$.  Hence, by Claim 2, 
$$
\bigcup_{i < \omega} \Delta(x_{0},\ldots, x_{K}, b'_{K+1,i})
$$
is consistent, so we may realize it with $(b'_{0},\ldots, b'_{K})$.  By compactness and Ramsey, we may additionally assume that $\langle b_{K+1,i} : i < \omega \rangle$ is $Nb'_{\leq K}$-indiscernible.  Put $b'_{K+1} = b'_{K+1,0}$.  It follows, by Kim's Lemma for tree Morley sequences (Fact \ref{witnessfacts}(1)), that $b'_{K+1} \ind^{K}_{N} b'_{\leq K}$ and, therefore by definition of $\Delta$, $b'_{<i} \ind^{K}_{N} b'_{i}$ for all $i \leq K+1$.  Additionally, by definition of $\Delta$, we have $b'_{\leq K+1} \models q_{K+1}$, $b'_{i} \models p(x;N)$ for all $i \leq K+1$, and $b'_{\leq K+1} \ind^{K}_{M} N$.  This shows $b'_{\leq K+1} \models \Gamma_{K+1}$.  
\end{proof}

\subsection{Doubly local character}

In \cite[Lemma 3.7]{24-Kaplan2017}, the following variant of local character was established:  if $\langle M_{i} : i < \alpha \rangle$ is an increasing sequence of elementary submodels of $N$ and $p \in S(N)$ does not Kim-divide over $M_{i}$ for all $i < \alpha$, then $p$ does not Kim-divide over $M_{\alpha}$.  The proof there uses the fact that $p$ is a complete type in an essential way, which left open whether or not a local version of this form of local character (hence the name \emph{doubly} local character) might also hold, where the type $p$ is replaced by a formula over $N$.  We prove this in Proposition \ref{doubly local character}, answering \cite[Question 3.17]{24-Kaplan2017} .  

\begin{defn}
Suppose $\alpha$ is an ordinal and $\mathcal{U}$ is an ultrafilter on $\alpha$.  Given a sequence of sequences $\langle \overline{b}_{i} : i < \omega \rangle$, where $\overline{b}_{i} = \langle b_{i,j} : j < \omega \rangle$ for all $i < \alpha$, we say that $\overline{a}$ is a $\mathcal{U}$\emph{-average} of $\langle \overline{b}_{i} : i < \alpha \rangle$ over $A$ if, for all $n< \omega$ and $\varphi(x_{0},\ldots, x_{n-1}) \in L(A)$, we have 
$$
\mathbb{M} \models \varphi(a_{<n}) \iff \{ i  \in \alpha : \mathbb{M} \models \varphi(b_{i,<n})\} \in \mathcal{U}.
$$
\end{defn}

It is an easy exercise to show that $\mathcal{U}$-averages exist for any sequence of sequences and parameter sets $A$.  

\begin{lem}  \label{lemma:limit of heirs}
Suppose we are given:
\begin{enumerate}
\item An increasing continuous elementary chain $\langle M_{i} : i \leq \alpha \rangle$ of models of $T$.
\item For every $i< \alpha$, $\bar{b}_{i}=\langle b_{i,j}: j<\omega\rangle$ is an indiscernible heir sequence over $M_{i}$.
\item For all $i \leq j$, $b_{i,0} \equiv_{M_{i}} b_{j,0}$.  
\end{enumerate}
Then for any ultrafilter $\mathcal{U}$ on $\alpha$ concentrating on end segments of $\alpha$, if $\bar{a}=\langle a_{j}: j<\omega \rangle$ realizes the $\mathcal{U}$-average of $\langle \bar{b}_{i} : i<\alpha\rangle$
over $M_{\alpha}$, then $\langle a_{j} : j<\omega \rangle$ is an heir
sequence over $M_{\alpha}$ such that $a_{0}\equiv_{M_{i}} b_{i,0}$ for all $i<\alpha$.
\end{lem}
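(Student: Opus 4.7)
The plan is to verify the three defining properties separately: (a) $a_0 \equiv_{M_i} b_{i,0}$ for every $i < \alpha$; (b) $\overline{a}$ is $M_\alpha$-indiscernible; and (c) for every $j < \omega$, $a_{<j} \ind^{u}_{M_\alpha} a_j$ (equivalently, $a_j \ind^{h}_{M_\alpha} a_{<j}$). Throughout, I use that any finite tuple from $M_\alpha = \bigcup_{i<\alpha} M_i$ lies in some $M_{i_0}$, and that every end segment of $\alpha$ belongs to $\mathcal{U}$.

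Parts (a) and (b) are largely formal. For (a), fix $i < \alpha$ and $\varphi(x;\overline{n}) \in L(M_i)$. Hypothesis (3) gives $\mathbb{M}\models\varphi(b_{k,0};\overline{n})$ iff $\mathbb{M}\models\varphi(b_{i,0};\overline{n})$ for all $k \geq i$, so $\{k : \mathbb{M}\models\varphi(b_{k,0};\overline{n})\}$ either contains the end segment $[i,\alpha)$ or is disjoint from it, and the $\mathcal{U}$-average transfers the truth value to $a_0$. For (b), given $\varphi(x_0,\dots,x_{n-1};\overline{n})$ with $\overline{n} \in M_{i_0}$, the sequence $\overline{b}_k$ is $M_{i_0}$-indiscernible for every $k \geq i_0$, so the truth value of $\varphi(b_{k,j_0},\dots,b_{k,j_{n-1}};\overline{n})$ depends only on the order type of $(j_0,\dots,j_{n-1})$ once $k \geq i_0$; passing to the $\mathcal{U}$-average yields $M_\alpha$-indiscernibility of $\overline{a}$.

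The real content is (c). Fix $\varphi(\overline{y};\overline{x},\overline{n})$ with $\overline{n} \in M_{i_0}$ and suppose $\mathbb{M} \models \varphi(a_{<j};a_j,\overline{n})$. By the $\mathcal{U}$-average, $A := \{k : \mathbb{M}\models\varphi(b_{k,<j};b_{k,j},\overline{n})\} \in \mathcal{U}$, so $A \cap [i_0,\alpha) \neq \emptyset$ and I fix any $k^* \in A \cap [i_0,\alpha)$. Since $\overline{b}_{k^*}$ is an indiscernible heir sequence over $M_{k^*}$, the type $\text{tp}(b_{k^*,<j}/M_{k^*}b_{k^*,j})$ is finitely satisfiable in $M_{k^*}$; this produces $\overline{n}^* \in M_{k^*} \subseteq M_\alpha$ with $\mathbb{M}\models\varphi(\overline{n}^*;b_{k^*,j},\overline{n})$. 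Since $\overline{n}^*,\overline{n} \in M_{k^*}$, the $M_{k^*}$-indiscernibility of $\overline{b}_{k^*}$ then yields $\mathbb{M}\models\varphi(\overline{n}^*;b_{k^*,0},\overline{n})$.

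To conclude, I claim $\overline{n}^*$ witnesses $\varphi(\overline{y};a_j,\overline{n})$; by the $\mathcal{U}$-average this reduces to showing $\{k' : \mathbb{M}\models\varphi(\overline{n}^*;b_{k',j},\overline{n})\} \in \mathcal{U}$. For every $k' \geq k^*$ we have $\overline{n}^*,\overline{n} \in M_{k^*} \subseteq M_{k'}$, so $M_{k'}$-indiscernibility of $\overline{b}_{k'}$ gives $\varphi(\overline{n}^*;b_{k',j},\overline{n}) \leftrightarrow \varphi(\overline{n}^*;b_{k',0},\overline{n})$, and hypothesis (3) gives $b_{k',0} \equiv_{M_{k^*}} b_{k^*,0}$, hence $\varphi(\overline{n}^*;b_{k',0},\overline{n}) \leftrightarrow \varphi(\overline{n}^*;b_{k^*,0},\overline{n})$, which we just established. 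Thus the set in question contains $[k^*,\alpha)$ and belongs to $\mathcal{U}$, as required. The only genuine obstacle is this final transfer step: one has to commit to a single large index $k^*$ early, extract the finite-satisfiability witness there, and then leverage indiscernibility of \emph{every} later $\overline{b}_{k'}$ (together with condition (3)) to push the chosen witness through the $\mathcal{U}$-average back to $a_j$.
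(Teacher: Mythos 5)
Your proof is correct and takes essentially the same route as the paper's: parts (a) and (b) are dispatched as formal consequences of the construction, and the heir property is proved by locating a single index $k^{*}$ in the ultrafilter beyond where the parameters live, extracting a witness $\overline{n}^{*}\in M_{k^{*}}$ from finite satisfiability of $\text{tp}(b_{k^{*},<j}/M_{k^{*}}b_{k^{*},j})$, and transferring it back to $\bar{a}$. The only (cosmetic) difference is in that last transfer: the paper moves the witness through $a_{0}$ using $b_{k^{*},j}\equiv_{M_{k^{*}}}a_{0}$ and the already-established $M_{\alpha}$-indiscernibility of $\bar{a}$, whereas you re-expand the $\mathcal{U}$-average using hypothesis (3) and the indiscernibility of every later $\bar{b}_{k'}$; both are valid.
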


\begin{proof}
The fact that $\bar{a}$ is an indiscernible sequence over $M_{\alpha}$ and $a_{0} \equiv_{M_{i}} b_{i,0}$ is
clear by construction. We are left with showing that $\bar{a}$ is
an heir sequence over $M_{\alpha}$. Suppose that $\psi\left(a_{j},a_{<j},m\right)$
where $m\in M_{\alpha}$ and $\psi\left(y,z,w\right)$ is an $L$-formula. Then for some $i<\alpha$ such that $m\in M_{i}$, $\psi\left(b_{i,j},b_{i,<j},m\right)$
holds. Hence for some $n\in M_{i}$, $\psi\left(b_{i,j},n,m\right)$
holds. Hence $\psi\left(a_{0},n,m\right)$ holds (as $b_{i,j}\equiv_{M_{i}}a_{0}$)
and hence $\psi\left(a_{j},n,m\right)$ holds. 
\end{proof}

\begin{defn}
Suppose $M$ is a model and $k<\omega$. Say that a formula $\varphi\left(x,a\right)$
\emph{$k$-Kim-divides over $M$} if there is an $\ind^{K}$-Morley sequence
$\langle a_{i}: i<\omega \rangle$ over $M$ starting with $a_{0}=a$ such
that $\{\varphi\left(x,a_{i}\right): i<\omega\}$ is $k$-inconsistent. 
\end{defn}

\begin{rem}
There is a choice involved in defining \emph{$k$-Kim-dividing}, since it is not known if, in an NSOP$_{1}$ theory, a formula that $k$-divides with respect to some $\ind^{K}$-Morley sequence will also $k$-divide along a Morley sequence in a global invariant type.  The above definition differs from the one implicitly used in \cite{24-Kaplan2017}, but in light of Corollary \ref{witnesschar} this definition seems reasonably canonical, given that any sequence which is a witness to Kim-dividing over $M$ will be an $\ind^{K}$-Morley sequence over $M$ and hence $\varphi\left(x,a\right)$
$k$-Kim-divides over $M$ for some $k<\omega$ iff $\varphi\left(x,a\right)$
Kim-divides over $M$. 
\end{rem}

\begin{prop} \label{doubly local character}
Suppose that $\langle M_{i}: i<\alpha\rangle$ is an increasing sequence
of models of $T$ with union $M=\bigcup_{i<\alpha}M_{i}$. Let $\varphi\left(x,y\right)$
be some formula (over $\emptyset$) and $a\in\mathbb{M}^{y}$. Fix some $k<\omega$. 
\begin{enumerate}
\item If $\varphi\left(x,a\right)$ Kim-divides over $M$ then $\varphi\left(x,a\right)$
Kim-divides over $M_{i}$ for some $i<\alpha$. 
\item If $\varphi\left(x,a\right)$ $k$-Kim-divides over $M_{i}$ for
all $i<\alpha$ then $\varphi\left(x,a\right)$ $k$-Kim-divides
over $M$.
\end{enumerate}
\end{prop}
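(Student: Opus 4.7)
The proof of both parts proceeds by averaging witnesses across the chain, using Lemma \ref{lemma:limit of heirs}. The substantive content is part (2), which I handle first.

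For (2), the strategy is to produce, for each $i<\alpha$, an indiscernible heir sequence $\bar b_i = \langle b_{i,j} : j < \omega\rangle$ over $M_i$ with $b_{i,0}=a$ such that $\{\varphi(x;b_{i,j}) : j < \omega\}$ is $k$-inconsistent, and then to apply Lemma \ref{lemma:limit of heirs} with an ultrafilter $\mathcal{U}$ on $\alpha$ concentrated on end segments. The $\mathcal{U}$-average $\bar a = \langle a_j : j<\omega\rangle$ will be an indiscernible heir sequence over $M$ with $a_0 \equiv_{M_i} a$ for every $i<\alpha$, hence $a_0 \equiv_M a$ by finite character; after an automorphism we may take $a_0 = a$. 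The formula $\neg\exists x\bigwedge_{j<k}\varphi(x;y_j)$ has no parameters from the $M_i$'s and holds on the initial segment of each $\bar b_i$, so the $\mathcal{U}$-average property transfers it to $\bar a$, yielding $k$-inconsistency. Finally, an indiscernible heir sequence over $M$ is an $\ind^{K}$-Morley sequence over $M$: from $b_{<j} \ind^{u}_{M} b_{j}$ one gets $b_{<j} \ind^{K}_{M} b_{j}$ (since $\ind^{u}$ implies $\ind^{K}$), and the symmetry of $\ind^{K}$ in NSOP$_{1}$ theories (Fact \ref{basic kimindep facts}(3)) yields $b_{j} \ind^{K}_{M} b_{<j}$. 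Thus $\bar a$ witnesses that $\varphi(x;a)$ $k$-Kim-divides over $M$.

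To build each $\bar b_i$, I start from a given witnessing $\ind^{K}$-Morley sequence $\bar c_i$ over $M_i$ with $c_{i,0}=a$ and $\{\varphi(x;c_{i,j})\}$ $k$-inconsistent, and inductively upgrade its first $k$ terms to an heir-independent tuple with the same $M_i$-type (so that $k$-inconsistency is preserved). At stage $j<k$, given $(b_{i,0},\dots,b_{i,j-1}) \equiv_{M_i} (c_{i,0},\dots,c_{i,j-1})$, I realize $\mathrm{tp}(c_{i,j}/M_i c_{i,<j})$ (pulled back to $M_i b_{i,<j}$ via the isomorphism) by some $b_{i,j}$ with $b_{i,j}\ind^{h}_{M_i} b_{i,<j}$; existence is arranged by the strengthened independence theorem (Fact \ref{fact:Kim Morley is consistent}(3)) together with extension in the heir direction, exploiting the Kim-independence $c_{i,j}\ind^{K}_{M_i} c_{i,<j}$ given by the $\ind^{K}$-Morley structure. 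Past $j=k$, I continue with Morley realizations of a global $M_i$-heir of $\mathrm{tp}(a/M_i)$ and extract an $M_i$-indiscernible heir sequence via Ramsey and compactness; $k$-inconsistency, being a finitary $L$-formula depending only on the first $k$ elements, is preserved throughout.

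For (1), I argue the contrapositive by a dual averaging. If $\varphi(x;a)$ fails to Kim-divide over every $M_i$, use iterated extension to choose coherent realizations $c_i \models \varphi(x;a)$ with $c_i \ind^{K}_{M_i} a$ and $c_i \equiv_{M_j a} c_j$ for $j\leq i$, and form the $\mathcal{U}$-average $c$ over $Ma$ with $\mathcal{U}$ concentrated on end segments. Then $c\models\varphi(x;a)$, and any $\psi(x;m,a)$ Kim-forking over $M$ has $m\in M_{i_0}$ for some $i_0$; a coheir analysis of the witnessing $M$-invariant Morley sequence (whose finitely many parameters may be taken from $M_{i_0}$) shows that $\psi(x;m,a)$ also Kim-forks over $M_{i_0}$, whence $\neg\psi(c_i;m,a)$ for all $i\geq i_0$ by $c_i\ind^{K}_{M_i}a$, and so $\neg\psi(c;m,a)$ on the average. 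Hence $c\ind^{K}_{M}a$, contradicting that $\varphi(x;a)$ Kim-divides over $M$. \textit{Main obstacle.} The heir upgrade in (2) is the delicate step: a general $\ind^{K}$-Morley sequence is not itself an heir sequence, and realizing its specific $M_i$-type by an heir-independent sequence demands coordinated use of the independence theorem and extension. The analogous subtle point in (1) is the descent of a Kim-forking formula from $M$ to an appropriate $M_i$, which rests on picking a coheir-based dividing witness whose parameters are already contained in $M_{i_0}$.
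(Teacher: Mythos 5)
Your overall averaging framework is the right one, but both parts contain genuine gaps. In part (2), the ``heir upgrade'' step cannot work as described. You ask for $b_{i,j}$ realizing $\mathrm{tp}(c_{i,j}/M_i c_{i,<j})$ (transported to $M_i b_{i,<j}$) with $b_{i,j} \ind^{h}_{M_i} b_{i,<j}$, i.e. $b_{i,<j} \ind^{u}_{M_i} b_{i,j}$. But whether $b_{i,<j} \ind^{u}_{M_i} b_{i,j}$ holds is determined entirely by $\mathrm{tp}(b_{i,<j} b_{i,j}/M_i)$, and your requirement forces $b_{i,\leq j} \equiv_{M_i} c_{i,\leq j}$; so the new tuple is heir-independent if and only if the original one already was. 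A general $\ind^{K}$-Morley sequence is not an heir sequence, and no application of the independence theorem or extension can change the $M_i$-type of the pair while preserving it. If instead you weaken the demand to preserving only the $k$-inconsistency of $\varphi$ (rather than the full type), you run squarely into the issue flagged in the Remark following the definition of $k$-Kim-dividing: it is not known that a formula which is $k$-inconsistent along some $\ind^{K}$-Morley sequence is $k$-inconsistent along an heir (or invariant Morley) sequence. The paper sidesteps this entirely: it averages the \emph{given} $\ind^{K}$-Morley witnesses (with no upgrade), observes that $k$-inconsistency and $M$-indiscernibility pass to the average, and then proves directly that the average is $\ind^{K}$-Morley over $M$ by a Kim-forking argument that invokes part (1) applied to the subchain $\langle M_i : i \in S\rangle$ for a suitable $S \in \mathcal{U}$.

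In part (1), your argument is essentially circular. The step ``$\psi(x;m,a)$ Kim-forks over $M$ with $m \in M_{i_0}$ implies it Kim-forks over $M_{i_0}$'' is (a strengthening of) the very statement (1) you are proving, and the proposed ``coheir analysis'' does not deliver it: a Morley sequence in a global $M$-invariant (or $M$-finitely satisfiable) type is not a Morley sequence in an $M_{i_0}$-invariant type, nor obviously an $\ind^{K}$-Morley sequence over $M_{i_0}$, so inconsistency along it says nothing about Kim-dividing over $M_{i_0}$. Moreover you then need $\neg\psi(c_i;m,a)$ for \emph{all} $i \geq i_0$, which requires Kim-forking over every such $M_i$\textemdash a base-change assertion with no justification given the failure of base monotonicity. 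There is also a construction gap: obtaining $c_i \ind^{K}_{M_i} a$ with $c_i \equiv_{M_j a} c_j$ for all $j \leq i$ is not an instance of extension (which enlarges the parameter set over a fixed base), but a change of base from $M_j$ to $M_i$. The paper's proof of (1) avoids all of this by working with heir sequences over each $M_i$ starting with $a$ (which exist unconditionally as reversed coheir sequences and are $\ind^{K}$-Morley by symmetry), noting via Corollary \ref{cor:witnessing} that $\{\varphi(x;b_{i,j}) : j<\omega\}$ is consistent, and using Lemma \ref{lemma:limit of heirs} to see the $\mathcal{U}$-average is an heir sequence\textemdash hence $\ind^{K}$-Morley\textemdash over $M$, so that Corollary \ref{cor:witnessing} applies again over $M$. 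The heirness that makes the averaging lemma work is built in from the start rather than imposed afterwards.
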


\begin{proof}
Note that this proposition, once proved, is immediately also true when
we allow parameters from $M$ inside $\varphi$, as long as we assume
these parameters are from $M_{0}$, by adding constants to the language. As the statement is trivial when $\alpha$ is a successor, we may assume $\alpha$ is a limit ordinal.  

(1) Suppose that $\varphi\left(x,a\right)$ does not Kim-divide
over any $M_{i}$. For $i<\alpha$, let $\bar{b}_{i}=\langle b_{i,j} : j<\omega \rangle$
be an indiscernible heir sequence starting with $b_{i,0}=a$ over
$M_{i}$ (such a sequence exists, by e.g., taking a coheir sequence
in reverse). In particular, $\bar{b}$ is an $\ind^{K}$-Morley sequence by
symmetry. By Corollary \ref{cor:witnessing}, $\{\varphi\left(x,b_{i,j}\right): j<\omega\}$
is consistent. Let $\mathcal{U}$ be an ultrafilter on $\alpha$, concentrating on end-segments of $\alpha$.
Let $\bar{a}=\langle a_{j} : j<\omega \rangle$ be a $\mathcal{U}$-average
of $\langle \bar{b}_{i} : i<\alpha \rangle$ over $M$.  Then Lemma \ref{lemma:limit of heirs} 
and symmetry imply that $\bar{a}$ is a $\ind^{K}$-Morley sequence over
$M$, and by construction $\{\varphi\left(x,a_{j}\right): j<\omega\}$
is consistent. By Corollary \ref{cor:witnessing}, $\varphi\left(x,a\right)$
does not Kim-divide over $M$.

(2)  Suppose that $\varphi\left(x,a\right)$ $k$-Kim-divides over $M_{i}$
for all $i<\alpha$. For $i<\alpha$ let $\bar{b}_{i}=\langle b_{i,j} : j<\omega \rangle$
be a $\ind^{K}$-Morley sequence over $M_{i}$ witnessing this, i.e., $\{\varphi\left(x,b_{i,j}\right): j<\omega\}$
is $k$-inconsistent and $b_{i,0}=a$. As above, we let $\mathcal{U}$ be an
ultrafilter on $\alpha$, concentrating on end-segments, and let $\bar{a}= \langle a_{j} : j<\omega\rangle$
be a $\mathcal{U}$-average of $\langle \bar{b}_{i} : i<\alpha \rangle$ over
$M$. Then $\overline{a}$ is an $M$-indiscernible sequence in $\text{tp}(a/M)$ such that $\{\varphi(x;a_{i}) : i < \omega\}$ is $k$-inconsistent, so is it is enough to show that $\bar{a}$ is a $\ind^{K}$-Morley sequence
over $M$. By symmetry it is enough to show that $a_{<j}\ind_{M}^{K}a_{j}$
for all $j<\omega$. Suppose this is not the case, i.e., $\models \psi\left(a_{<j},a_{j},m\right)$ for some $m\in M$ and $j < \omega$, where $\psi\left(z,y,w\right)$ is an $L$-formula
and $\psi\left(z,a_{j},m\right)$ Kim-divides over $M$, so also $\psi\left(z,a,m\right)$
Kim-divides over $M$. Hence, for some $S\in\mathcal{U}$, $m\in M_{i}$
and $\models \psi\left(b_{i,<j},b_{i,j},m\right)$ for all $i\in S$.
Let $\langle N_{i}: i<\beta \rangle$ be an increasing enumeration of $\langle M_{i} : i\in S \rangle$.
By (1), applied to $\langle N_{i} : i<\beta \rangle$ and the formula
$\psi\left(z,a,m\right)$, we have that $\psi(z,a,m)$ Kim-divides over $M_{i}$ for some $i\in S$.
Hence also $\psi\left(z,b_{i,j},m\right)$ Kim-divides over $M_{i}$
(as $b_{i,j}\equiv_{M_{i}}a$), contradicting the fact that $\bar{b}_{i}$
is a $\ind^{K}$-Morley sequence over $M_{i}$. 
\end{proof}

%
\subsection{Reformulating the Kim-Pillay-style characterization}

Our final application will be an easy corollary of witnessing for $\ind^{K}$-Morley sequences, allowing us to give a more satisfying formulation of the Kim-Pillay-style characterization of Kim-independence.  In \cite[Proposition 5.3]{ArtemNick}, a Kim-Pillay-style criterion was given for NSOP$_{1}$, consisting of 5 axioms for an abstract independence relation on subsets of the monster model.  Later, it was shown in \cite[Theorem 9.1]{24-Kaplan2017} that any independence relation $\ind$ satisfying these axioms must \emph{strengthen} $\ind^{K}$ in the sense that whenever $M \models T$ and $a \ind_{M} b$, then also $a \ind^{K}_{M} b$.  In order to characterize $\ind^{K}$, it was necessary to add an additional axiom to the list called \emph{witnessing}:  if $a \nind_{M} b$ witnessed by $\varphi(x;b)$ and $(b_{i})_{i < \omega}$ is a Morley sequence over $M$ in a global $M$-invariant (or even $M$-finitely satisfiable) type extending $\text{tp}(b/M)$, then $\{\varphi(x;b_{i}) : i < \omega\}$ is inconsistent.  Though useful in practice, this is somewhat unsatisfying, as it requires reference to independence notions like invariance or finite satisfiability instead of a property intrinsic to $\ind$.  

\begin{thm} \label{criterion}
Assume there is an \(\text{Aut}(\mathbb{M})\)-invariant ternary relation \(\ind\) on small subsets of the monster \(\mathbb{M} \models T\) which satisfies the following properties, for an arbitrary \(M \models T\) and arbitrary tuples from $\mathbb{M}$.
\begin{enumerate}
\item Strong finite character: if \(a \nind_{M} b\), then there is a formula \(\varphi(x,b,m) \in \text{tp}(a/bM)\) such that for any \(a' \models \varphi(x,b,m)\), \(a' \nind_{M} b\). 
\item Existence over models:  \(M \models T\) implies \(a \ind_{M} M\) for any \(a\).
\item Monotonicity: \(aa' \ind_{M} bb'\) \(\implies\) \(a \ind_{M} b\).
\item Symmetry: \(a \ind_{M} b \iff b \ind_{M} a\).
\item The independence theorem: \(a \ind_{M} b\), \(a' \ind_{M} c\), \(b \ind_{M} c\) and $a \equiv_{M} a'$ implies there is $a''$ with $a'' \equiv_{Mb} a$, $a'' \equiv_{Mc} a'$ and $a'' \ind_{M} bc$.
\item $\ind$-Morley sequences are witnesses:  if $M \models T$ and $I = (b_{i})_{i < \omega}$ is an $M$-indiscernible sequence with $b_{0} = b$ satisfying $b_{i} \ind_{M} b_{<i}$, then whenever $a \nind_{M} b$, there is $\varphi(x;m,b) \in \text{tp}(a/Mb)$ such that $\{\varphi(x;m,b_{i}) : i < \omega\}$ is inconsistent.  
\end{enumerate}
Then \(T\) is NSOP\(_{1}\) and $\ind = \ind^{K}$ over models, i.e. if $M \models T$, $a \ind_{M} b$ if and only if $a \ind^{K}_{M} b$.
\end{thm}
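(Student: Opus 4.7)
The plan is to reduce to the existing Kim-Pillay-style theorems of \cite[Proposition 5.3]{ArtemNick} and \cite[Theorem 9.1]{24-Kaplan2017}, whose arguments for the conclusions ``$T$ is NSOP$_{1}$'' and ``$\ind$ refines $\ind^{K}$ over models'' use only axioms (1)--(5) and not the older invariant-Morley witnessing axiom. Invoking these directly yields that $T$ is NSOP$_{1}$ and that $a \ind_{M} b \implies a \ind^{K}_{M} b$ for all $M \models T$, reducing the theorem to establishing the reverse inclusion $a \ind^{K}_{M} b \implies a \ind_{M} b$, for which the new witnessing axiom (6) is designed.

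The first subsidiary observation I would make is that, under (1)--(5), coheir independence refines $\ind$: namely, if $a \ind^{u}_{M} b$ then $a \ind_{M} b$. Contrapositively, assume $a \nind_{M} b$ and use strong finite character (1) to pick $\varphi(x;b,m) \in \text{tp}(a/Mb)$ all of whose realizations fail to be $\ind$-independent from $b$ over $M$; combining existence (2), symmetry (4), and monotonicity (3) gives $m' \ind_{M} b$ for every $m' \in M$, so $\varphi(x;b,m)$ has no realization in $M$, whence $a \nind^{u}_{M} b$. As an immediate corollary, every $M$-indiscernible coheir sequence over $M$ is an $\ind$-Morley sequence over $M$.

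For the main step, suppose for contradiction that $a \ind^{K}_{M} b$ while $a \nind_{M} b$. Take an $M$-indiscernible coheir sequence $\langle b_{i} : i < \omega \rangle$ over $M$ in a global $M$-finitely satisfiable extension of $\text{tp}(b/M)$, with $b_{0} = b$; by the previous paragraph this is $\ind$-Morley over $M$. Axiom (6) then produces $\psi(x;m',b_{0}) \in \text{tp}(a/Mb_{0})$ with $\{\psi(x;m',b_{i}) : i < \omega\}$ inconsistent, and Kim's lemma for Kim-dividing in the NSOP$_{1}$ theory $T$ (Fact \ref{kimslemma}(2)) converts this inconsistency along a Morley sequence in a global $M$-invariant type into the statement that $\psi(x;m',b_{0})$ Kim-divides over $M$. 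But $a \models \psi(x;m',b_{0})$, so $a \nind^{K}_{M} b$, contradicting our assumption.

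I expect the only delicate point to be the reduction at the beginning: specifically, confirming that the earlier proofs in \cite{ArtemNick} and \cite{24-Kaplan2017} really do extract ``$T$ is NSOP$_{1}$'' and the inclusion $\ind \subseteq \ind^{K}$ from axioms (1)--(5) alone, with no appeal to the older witnessing axiom, so that our new axiom (6) is responsible only for the reverse inclusion. Once this is granted the remainder is short, and the net effect of the reformulation is to replace the reference to invariant Morley sequences in the previous witnessing axiom by a (weaker-looking) reference to $\ind$-Morley sequences, bridged by the observation that coheir sequences are already $\ind$-Morley.
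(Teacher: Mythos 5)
Your proposal is correct and follows essentially the same route as the paper: both quote \cite[Theorem 9.1]{24-Kaplan2017} for NSOP$_{1}$ and the inclusion $\ind\subseteq\ind^{K}$ from axioms (1)--(5), both observe (via strong finite character, existence, symmetry, and monotonicity) that $\ind^{u}$ implies $\ind$ so that a coheir sequence in $\operatorname{tp}(b/M)$ is $\ind$-Morley, and both then apply axiom (6) to that sequence. The only cosmetic difference is that you run the last step by contradiction, turning the inconsistent instance from (6) into Kim-dividing by the definition of Kim-dividing, whereas the paper argues contrapositively by realizing $\bigcup_{i}p(x;b_{i})$ with a conjugate of $a$.
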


\begin{proof}
Because $\ind$ satisfies axioms (1) through (5), it follows that $T$ is NSOP$_{1}$ and for any $M \models T$, if $a \ind_{M} b$ then $a \ind^{K}_{M} b$, by \cite[Theorem 9.1]{24-Kaplan2017}.  For the other direction, suppose $a \ind^{K}_{M} b$.  Let $I = \langle b_{i} : i < \omega \rangle$ be an $M$-finitely satisfiable Morley sequence over $M$ with $b_{0} = b$.  As $a \ind^{K}_{M} b$, we find $a' \equiv_{Mb} a$ so that $I$ is $Ma'$-indiscernible.  By \cite[Claim in proof of Proposition 5.3]{ArtemNick}, any relation $\ind$ satisfying (1)--(4), we have $c \ind^{u}_{M} d$ implies $c \ind_{M} d$.  Therefore, the sequence $I$ is, in particular, an $\ind$-Morley sequence over $M$ and $a' \models \bigcup_{i < \omega} p(x;b_{i})$ so $a \ind_{M} b$ by (6).  
\end{proof}

\begin{rem}
In any NSOP$_{1}$ theory, $\ind^{K}$ satisfies properties (1)--(6), by Fact \ref{basic kimindep facts} and Theorem \ref{witnessing theorem}, so the existence of such a relation characterizes NSOP$_{1}$ theories.   
\end{rem}

\bibliographystyle{alpha}
\bibliography{ms.bib}{}

\begin{thebibliography}{KKS14}

\bibitem[CR16]{ArtemNick}
Artem Chernikov and Nicholas Ramsey.
\newblock On model-theoretic tree properties.
\newblock {\em Journal of Mathematical Logic}, page 1650009, 2016.

\bibitem[DS04]{dvzamonja2004maximality}
Mirna D{\v{z}}amonja and Saharon Shelah.
\newblock On $\vartriangleleft^{*}$-maximality.
\newblock {\em Annals of Pure and Applied Logic}, 125(1):119--158, 2004.

\bibitem[Kim01]{kim2001simplicity}
Byunghan Kim.
\newblock Simplicity, and stability in there.
\newblock {\em The Journal of Symbolic Logic}, 66(02):822--836, 2001.

\bibitem[KKS14]{KimKimScow}
Byunghan Kim, Hyeung-Joon Kim, and Lynn Scow.
\newblock Tree indiscernibilities, revisited.
\newblock {\em Arch. Math. Logic}, 53(1-2):211--232, 2014.

\bibitem[KR18]{kruckman2018generic}
Alex Kruckman and Nicholas Ramsey.
\newblock Generic expansion and skolemization in $\text{NSOP}_1$ theories.
\newblock {\em Annals of Pure and Applied Logic}, 169(8):755--774, 2018.

\bibitem[KR20]{kaplan2017kim}
Itay Kaplan and Nicholas Ramsey.
\newblock On \text{K}im-independence.
\newblock {\em Journal of the European Mathematical Society}, 22(5):1423--1474,
  2020.

\bibitem[KRS19]{24-Kaplan2017}
Itay Kaplan, Nicholas Ramsey, and Saharon Shelah.
\newblock Local character of {K}im-independence.
\newblock {\em Proc. Amer. Math. Soc.}, 147(4):1719--1732, 2019.

\bibitem[She90]{shelah1990classification}
Saharon Shelah.
\newblock {\em Classification theory: and the number of non-isomorphic models}.
\newblock Elsevier, 1990.

\bibitem[Win75]{winkler1975model}
Peter Winkler.
\newblock Model-completeness and skolem expansions.
\newblock {\em Model Theory and Algebra}, pages 408--463, 1975.

\end{thebibliography}

\end{document}